\documentclass[hidelinks]{article}
\usepackage[utf8]{inputenc}
\usepackage{titlesec}
\usepackage[includehead,a4paper,headheight=13.6pt]{geometry} 
\usepackage[indent=15pt]{parskip} 
\usepackage{amssymb,amsmath,amsthm}
\usepackage{indentfirst}
\usepackage{hyperref}
\usepackage[nameinlink]{cleveref}

\newcommand\nnfootnote[1]{%
  \begin{NoHyper}
  \renewcommand\thefootnote{}\footnote{#1}%
  \addtocounter{footnote}{-1}%
  \end{NoHyper}
}

\newcommand{\address}{{
\bigskip
\footnotesize

S.F.~MacDonald, PhD student, \textsc{Department of Mathematics, University of Toronto, ON M5S2E4}\par\nopagebreak
\textit{E-mail address}: \texttt{sullivan@math.toronto.edu}
}}

\newtheorem{thm}{Theorem}
\newtheorem{lem}[thm]{Lemma}
\newtheorem{cor}{Corollary}[thm]

\newcommand{\re}{\mathrm{Re}\,}
\newcommand{\im}{\mathrm{Im}\,}
\newcommand{\rmk}{\noindent\textit{Remark}: }

\titleformat*{\section}{\large\bfseries}
\titleformat*{\subsection}{\normalsize\bfseries}

\title{Structural aspects of extremal functions in the Krzy\.z conjecture}

\author{Sullivan Francis MacDonald\footnote{University of Toronto, Canada}}
\date{}

\begin{document}
\maketitle

\nnfootnote{\textit{Date}: May 8, 2026}
\nnfootnote{\textit{2020 Mathematics Subject Classification}. 30C45, 30H05, 30J15.}
\nnfootnote{\textit{Key words and phrases}. Bounded analytic functions, Taylor coefficients, Non-linear extremal problems.}

\begin{abstract}
Extremal functions for the \(n\)th coefficient in the Krzy\.z conjecture are atomic singular inner functions with at most \(n\) atoms. This paper gives a lower bound on the number of atoms \(N\) of the form \(N\geq cn\), marking progress toward proving the expected \(N=n\). Furthermore, we prove new formulas for extremal functions using variational techniques.

Using these results and several other methods, we establish new conditions on extremal functions which are equivalent to the Krzy\.z conjecture being true. We also characterize the possible analytic invariants of extremal functions.
\end{abstract}

\section{Introduction and main results}

The Krzy\.z conjecture, posed by J. Krzy\.z \cite{Krzyz1968} in 1968, is a coefficient problem for functions in the class \(\mathcal B_0=\{f\in\textrm{Hol}(\mathbb D)\;|\; 0<|f|\leq1\}\). It states that if \(\smash{f(z)=\sum_{j=0}^\infty a_jz^j\in\mathcal B_0}\) and \(n\in\mathbb N\) then \(|a_n|\leq2/e\), and equality holds if and only if \(f(z)=\xi_1f_n(\xi_2 z)\), where \(\xi_1,\xi_2\in\partial\mathbb D\) and 
\begin{equation}\label{fn}
    f_n(z)=\exp\bigg(\frac{z^n-1}{z^n+1}\bigg).    
\end{equation}
Equivalently, defining \(M_n(f)=\re a_n\), the Krzy\.z conjecture is equivalent to the statement that if \(f\in\mathcal{B}_0\) and \(a_0>0\) then \(M_n(f)\leq 2/e\) for any \(n\in\mathbb N\), with equality only if \(f= f_n\). 

A thorough survey of progress on the problem up to 2015 is given by Mart\'in, Sawyer, Uriarte-Tuero \& Vukoti\'c \cite{MartinSawyerTueroVukotic2015}. The authors of \cite{MartinSawyerTueroVukotic2015} also collected many statements which they proved to be equivalent to the Krzy\.z conjecture. Many of the valuable ideas developed in \cite{MartinSawyerTueroVukotic2015} have opened new and promising directions for research, and several of our results build on their work directly. Agler \& McCarthy \cite{AglerMcCarthy2021} also showed that the conjecture would follow from a conjectured bound on an entropy functional for polynomials, along with a degree condition on extremal functions.

In a recent preprint, Lei \& Zhang \cite{LeiZhang2026} claim to prove Agler \& McCarthy's entropy conjecture. Such a development would mean that the Krzy\.z conjecture would follow if one could show that extremal functions have full degree. Our first main result, while not strong enough to prove this condition, gives the first non-trivial lower bound on this degree (a parameter which we denote by \(N\) below). Furthermore, we give conditional lower bounds on \(N\) that may be useful.

There have been several claims of proof of the Krzy\.z conjecture since 2009 by Krushkal \cite{Krushkal2009,Krushkal2016,Krushkal2020,Krushkal2022} using advanced properties of Teichm\"uller spaces. These works contain valuable insights, though the community does not seem to have reached a consensus about their completeness, and a significant level of expertise in complex geometry would be needed to fairly assess Krushkal's purported proofs. In any case, there remain aspects of the Krzy\.z conjecture to explore, and a proof using more accessible techniques would be valuable. 

The main results of this work concern extremal functions. A function \(f\in\mathcal B_0\) is called extremal for \(M_n\) if \(M_n(f)\geq M_n(F)\) for every \(F\in\mathcal{B}_0\). Such functions exist by a standard normal families argument. Indeed, Hummel et al. \cite{HummelScheinbergZalcman1977} showed that if \(n\in\mathbb N\) and \(f\) is extremal for \(M_n\), then there exists \(N\leq n\), \(\lambda_1,\dots,\lambda_N>0\), and distinct \(\alpha_1,\dots,\alpha_N\in\partial\mathbb D\) such that
\begin{equation}\label{extremal}
    f(z)=\exp\bigg(-\sum_{j=1}^N\lambda_j\frac{1+\alpha_jz}{1-\alpha_jz}\bigg).    
\end{equation}
This is called an atomic singular inner function, since the Herglotz measure of \(-\log f\) is a discrete atomic measure on the circle with point masses at \(\alpha_1,\dots,\alpha_N\) of respective weights \(\lambda_1,\dots,\lambda_N\).

If the Krzy\.z conjecture is true, then actually \(N=n\), \(\lambda_1=\cdots=\lambda_n=\frac{1}{n}\), and \(\alpha_1,\dots,\alpha_n\) are the \(n\)th roots of \(-1\). One can show that \(N\geq 2\) for \(n\geq 2\), but otherwise no lower bound on \(N\) has appeared in the literature. Our first result gives a modest lower bound that scales with \(n\).

\medskip

\begin{thm}\label{thm:1}
Fix \(n\in\mathbb N\) and let \(f\) be an extremal function for \(M_n\) as in \eqref{extremal}. There exists a constant \(c>0\), independent of \(n\) and \(N\), such that \(N\geq cn\). 
\end{thm}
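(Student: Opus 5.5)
The plan is to compare the extremal value with the size of the \(n\)th coefficient that an atomic singular inner function with only \(N\) atoms can have. Since \(f_n\in\mathcal B_0\) has \(a_0>0\) and \(M_n(f_n)=2/e\), any extremal \(f\) satisfies \(|a_n(f)|\ge M_n(f)\ge 2/e\). It therefore suffices to prove an upper bound of the form
\begin{equation*}
|a_n(f)|\le C\Big(\frac{N}{n}\Big)^{\gamma}\Phi(\Lambda),\qquad \Lambda=\lambda_1+\cdots+\lambda_N,
\end{equation*}
for some \(\gamma>0\), together with an a priori bound \(\Lambda\le \Lambda_0\) for extremal functions. Combining the two gives \(2/e\le C'(N/n)^\gamma\), hence \(N\ge cn\).

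For the coefficient bound I will work on the boundary. Write \(\alpha_j=e^{-i\theta_j}\). Then \(f(e^{i\theta})=e^{i\varphi(\theta)}\) with
\begin{equation*}
\varphi(\theta)=\sum_{j=1}^N\lambda_j\cot\Big(\frac{\theta-\theta_j}{2}\Big),\qquad \varphi'(\theta)=-\frac12\sum_{j=1}^N\frac{\lambda_j}{\sin^2((\theta-\theta_j)/2)}<0 .
\end{equation*}
Hence
\begin{equation*}
a_n=\frac1{2\pi}\int_0^{2\pi}e^{i(\varphi(\theta)-n\theta)}\,d\theta ,
\end{equation*}
which I would justify by taking radial limits \(r\to1\) and using dominated convergence, since \(|f|\le1\). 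This is an oscillatory integral whose phase \(\psi=\varphi-n\theta\) has nonvanishing derivative (\(|\psi'|\ge n\)), but it is singular at the \(N\) poles \(\theta_j\). Near each pole the phase rotates infinitely often.

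I would split the circle into the \(N\) arcs between consecutive poles, and on each arc into a neighbourhood of each endpoint and a middle part. On the middle part, \(\psi'\) is monotone on at most two subintervals, because \(\varphi'\) is a sum of convex functions of \(\theta\) on the arc. There van der Corput's first-derivative test gives a contribution \(O(1/n)\) per arc, hence \(O(N/n)\) in total. Near a pole \(\theta_j\), I would substitute \(u=\cot((\theta-\theta_j)/2)\), which turns the dominant term \(\lambda_ju\) into a linear phase. The remaining term \(-n\theta\) becomes a slowly varying phase \(2n\arctan(1/u)\). The contribution should then be of the size of the Fourier-type integral of a single-atom function, which is classically \(O(\lambda_j^{1/4}n^{-3/4})\) via the Laguerre asymptotics of the coefficients of \(\exp(-\lambda\frac{1+z}{1-z})\). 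Summing over \(j\) and applying Hölder's inequality gives
\begin{equation*}
|a_n|\le C\Big(\frac Nn+\frac{N^{3/4}\Lambda^{1/4}}{n^{3/4}}\Big),
\end{equation*}
which has the desired shape with \(\gamma=3/4\) once \(\Lambda\) is bounded. The interaction between neighbouring atoms must be handled so that these local estimates remain uniform when poles cluster. I would do this by choosing the cut points where \(|\varphi'|\) is comparable to the single dominant term, and absorbing the error into the van der Corput part.

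The step I expect to be the main obstacle is the a priori bound on \(\Lambda\), since the estimate above is useless if the total mass grows with \(n\). I would first try a variational argument at the extremal. Perturb \(f\mapsto f^{1+\varepsilon}\) (this stays in \(\mathcal B_0\) and scales every \(\lambda_j\)), and also perturb \(f\mapsto f(e^{it}z)\) together with individual atom masses. The first-order conditions \(\frac{d}{d\varepsilon}M_n=0\) give identities linking \(\Lambda\) and \(\sum\lambda_j\alpha_j^k\) to \(a_n\) and neighbouring coefficients. Combined with \(|a_k|\le 1-a_0^2=1-e^{-2\Lambda}\), these should force \(\Lambda\) to be bounded by an absolute constant; for \(f_n\), \(\Lambda=1\). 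If this fails, a fallback is a direct bound: when \(\Lambda\) is large, \(|f|\) is tiny on most of a circle of radius \(r=1-1/n\), since \(\log|f(z)|\le-\Lambda\min_j P(r,\cdot)\) averages to \(-\Lambda\). That smallness, used in Cauchy's formula, may give \(|a_n|<2/e\) directly for \(\Lambda\ge\Lambda_0\).
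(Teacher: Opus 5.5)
\textbf{There is a genuine gap: a sign error removes the stationary points, which are the crux of the problem, and the bound on $\Lambda$ is never established.}

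\emph{The sign error.} With $\alpha_j=e^{-i\theta_j}$ and $z=e^{i\theta}$ one has $\frac{1+\alpha_jz}{1-\alpha_jz}=i\cot\big(\frac{\theta-\theta_j}{2}\big)$. Hence $\varphi(\theta)=-\sum_j\lambda_j\cot\big(\frac{\theta-\theta_j}{2}\big)$ and $\varphi'=\frac12\sum_j\lambda_j\csc^2\big(\frac{\theta-\theta_j}{2}\big)>0$, as it must be for an inner function. With your sign, $e^{i\varphi}=\overline{f}$ on $\partial\mathbb D$, so your integral equals $\overline{\hat f(-n)}=0$ because $f\in H^\infty$. That is exactly why you see no stationary points. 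With the correct sign, $\psi'=\varphi'-n$ does vanish. On each arc $\varphi'$ is convex, blows up at both poles and typically dips below $n$, which gives up to two stationary points per arc. For $f_n$ itself, $\varphi=\tan(n\theta/2)$ and $\varphi'=\frac n2\sec^2(n\theta/2)$, which equals $n$ at $2n$ points.

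\emph{The stationary points are not handled.} Bounding the contribution of neighbourhoods of stationary points is the central difficulty, and your proposal does not address it. Your near-pole substitution captures a stationary point only when a single isolated atom with $\lambda_j\ll n$ dominates $\varphi'$ there. When atoms cluster at the scale $\sqrt{\lambda_j/n}$, or the masses are very uneven, there is no dominant term. In that case ``choosing cut points and absorbing the error into the van der Corput part'' is not an argument, because on the middle part $|\psi'|$ is not $\gtrsim n$. So the bound $|a_n|\le C(N/n+N^{3/4}\Lambda^{1/4}n^{-3/4})$ is unproven. It also asserts far more than the theorem needs, namely that $a_n\to0$ as $N/n\to0$.

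\emph{The a priori bound $\Lambda\le\Lambda_0$ is missing.} Neither route you sketch supplies it.
\begin{itemize}
\item The variation $f\mapsto f^{1+\varepsilon}$ only yields $\re T_n(fg)=0$, i.e.\ $\Lambda a_n=\re\sum_{j\ge1}a_{n-j}b_j$. Since $|b_j|\le 2\Lambda$, this does not bound $\Lambda$.
\item The fallback works only when $\Lambda\gg n$. On $|z|=1-\frac1n$ you only have $\log|f|\le-\Lambda\frac{1-r}{1+r}\approx-\frac{\Lambda}{2n}$. The mean value $-\Lambda$ of $\log|f|$ says nothing about most points, because the Poisson kernel concentrates: a single atom of mass $\Lambda\ll n$ leaves $|f|\approx 1$ off an arc of length $O(\sqrt{\Lambda/n})$.
\end{itemize}
With only the available bound $\Lambda\le 2n$ (Lemma~\ref{lower bound a_0}), your estimate would give at best $N\gtrsim n^{2/3}$.

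\emph{How the paper avoids both problems.} It never tries to make $a_n$ small. It uses van der Corput only on $K_1=\{\varphi'>k_1n\}$ and integration by parts only on $K_2=\{\varphi'<k_2n\}$. Both sets are away from the stationary points, and each costs $O(N/n)$. The middle region, which contains all stationary points, is bounded trivially by its measure. Ermers' inequality (Lemma~\ref{lem:ermers}) then shows that this trivial part plus $k_2|K_2|/2\pi$ is at most $2/3<2/e$, uniformly in $\Lambda$. The margin $\frac2e-\frac23\le\frac{7N}{\pi n}$ gives $N\ge cn$.
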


\noindent\textit{Remark}: Our proof uses oscillatory integral methods similar to those of Horowitz \cite{Horowitz1978} and Ermers \cite{Ermers1990} to bound \(|a_n|\). We get \(c=\frac{2\pi}{7}(\frac{1}{e}-\frac{1}{3})\approx 0.03\), but this can likely be improved.

Many authors have used variational methods to extract coefficient identities for extremal functions. Using both new and existing identities, we develop formulas that encode this variational data. In what follows, we let \(T_j(F)\) denote the \(j\)th Taylor coefficient at zero of \(F\in\mathrm{Hol}(\mathbb D)\).

\medskip

\begin{thm}\label{thm:X}
Let \(f(z)=e^{g(z)}=\sum_{j=0}^\infty a_jz^j\) be extremal for \(M_n\), let \(g(z)=\sum_{j=0}^\infty b_jz^j\), and set
\[
    P(z)=a_n+2\sum_{j=1}^na_{n-j}z^j    \qquad\textrm{and}\qquad Q(z)=T_n(fg)+2\sum_{j=1}^n T_{n-j}(fg)z^j. 
\]
Then for all \(z\in \mathbb D\), it holds that
\[
    g(z)=\frac{z^nQ(1/z)-z^n\overline{Q(\overline{z})}}{z^nP(1/z)+z^n\overline{P(\overline{z})}}\qquad\textrm{and}\qquad zg'(z)=n-\frac{z^{n-1}P'(1/z)+z^{n+1}\overline{P'(\overline{z})}}{z^nP(1/z)+z^n\overline{P(\overline{z})}}.
\]
\end{thm}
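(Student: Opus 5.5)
The plan is to exploit two facts about an extremal \(f=e^{g}\) of the form \eqref{extremal}. First, \(g\) is a rational function with a reflection symmetry across \(\partial\mathbb D\). Second, perturbing the Herglotz measure of \(-g\) produces first-order conditions on the Taylor coefficients. Both formulas then reduce to comparing Taylor coefficients up to order \(z^n\).

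\textbf{Step 1: the denominator.} Write \(H_\beta(z)=-\frac{1+\beta z}{1-\beta z}\) for \(\beta\in\partial\mathbb D\). The function \(f e^{\varepsilon H_\beta}\) lies in \(\mathcal B_0\) for every \(\varepsilon\geq 0\). Differentiating \(M_n\) at \(\varepsilon=0\), extremality gives \(\re T_n(fH_\beta)\le 0\). Since
\[
T_n(fH_\beta)=-\Big(a_n+2\sum_{k=1}^n a_{n-k}\beta^k\Big)=-P(\beta),
\]
this says \(\re P\ge0\) on \(\partial\mathbb D\). Varying the weight \(\lambda_j\) in both directions gives \(\re P(\alpha_j)=0\). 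Hence each \(\alpha_j\) is a minimum of \(\re P\) on the circle, so \(\tfrac{d}{d\theta}\re P(e^{i\theta})\) also vanishes there; one can equally get this by moving the atom \(\alpha_j\).

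Set \(R(z)=z^nP(1/z)+z^n\overline{P(\overline z)}\), a polynomial of degree at most \(2n\). On \(|z|=1\) it equals \(2z^n\re P(1/z)\), and it is self-reciprocal: \(z^{2n}\overline{R(1/\overline z)}=R(z)\). The vanishing of \(\re P\) and its derivative at \(\alpha_j\) therefore gives a double zero of \(R\) at \(\overline{\alpha_j}=1/\alpha_j\).

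\textbf{Step 2: reflection.} From \eqref{extremal}, \(g\) is rational of degree \(N\), with simple poles only at the points \(\overline{\alpha_j}\), and it is finite at \(\infty\). It satisfies \(g(z)=-\overline{g(1/\overline z)}\). Consequently \(Rg\) is a polynomial of degree at most \(2n\), and it is anti-self-reciprocal. Differentiating the reflection identity shows \(zg'(z)=\overline{wg'(w)}\) with \(w=1/\overline z\), so \(zg'\) is symmetric. Its poles are double poles at the \(\overline{\alpha_j}\), which the double zeros of \(R\) cancel. Hence \(zg'R\) is a self-reciprocal polynomial of degree at most \(2n\). The numerators in the theorem have the matching symmetry types:
\[
S(z)=z^nQ(1/z)-z^n\overline{Q(\overline z)}\ \text{is anti-self-reciprocal},\qquad W(z)=nR(z)-z^{n-1}P'(1/z)-z^{n+1}\overline{P'(\overline z)}\ \text{is self-reciprocal}.
\]
A (anti-)self-reciprocal polynomial of degree at most \(2n\) is determined by its coefficients of \(z^0,\dots,z^n\). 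It therefore suffices to prove \(Rg\equiv S\) and \(zg'R\equiv W\) modulo \(z^{n+1}\).

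\textbf{Step 3: low-order coefficients.} From \(z^nP(1/z)=a_nz^n+2\sum_{k<n}a_kz^k\) we get \(R\equiv 2f\pmod{z^n}\), and the \(z^n\) coefficient of \(R\) is \(2\re a_n\). Thus \(Rg\equiv 2fg\pmod{z^n}\), which matches the coefficients \(2T_k(fg)\), \(k<n\), of \(S\). At order \(n\), the claim reduces to \(\re T_n(fg)=0\), using that \(b_0=-\sum\lambda_j\) is real. This follows from the two-sided admissible variation \(g\mapsto(1+\varepsilon)g\).

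Similarly, \(zg'R\equiv 2zg'f=2zf'\pmod{z^n}\). On the other side, \(nR-z^{n-1}P'(1/z)\) has coefficient \(2(n-(n-k))a_k=2ka_k\) at \(z^k\) for \(k<n\), and the conjugate term only contributes from \(z^{n}\) on. The \(z^n\) coefficient is checked in the same way, again using that \(b_0\) is real.

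\textbf{Main obstacle.} I expect the delicate point to be Step 1, namely producing genuinely \emph{double} zeros of \(R\) at every \(\overline{\alpha_j}\). This is what makes \(zg'R\) a polynomial rather than merely rational. It needs the one-sided atom-insertion inequality combined with the equality \(\re P(\alpha_j)=0\); if the minimum argument feels fragile, I would instead justify the derivative condition directly by rotating a single atom \(\alpha_j\mapsto e^{it}\alpha_j\). Note that the argument never uses \(N=n\): when \(N<n\), the surplus zeros of \(R\) are simply shared with the numerators.
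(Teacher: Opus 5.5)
Your proof is correct, and it reaches the formulas by a different mechanism than the paper.

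\textbf{The paper's route.} The paper turns the boundary conditions \(\re P(\alpha_k)=0\) and \(\im \alpha_kP'(\alpha_k)=0\) into infinite families of linear coefficient identities \eqref{eq:vanishing r} and \eqref{eq:derivative vanishing r}, one for each \(r\geq 0\). It does this by multiplying by \(\alpha_k^r\), weighting by \(\lambda_k\), and recognising \(b_j=-2\sum_k\lambda_k\alpha_k^j\). It then multiplies by \(z^{n+r}\), sums over \(r\), and passes to the limit in the Taylor series of \(g\) and \(g'\). Finally it simplifies the resulting numerators using \(ja_j=\sum_k kb_ka_{j-k}\).

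\textbf{Your route.} You use the same inputs:
\begin{itemize}
\item the double zeros of \(R(z)=z^nP(1/z)+z^n\overline{P(\overline z)}\) at \(\overline{\alpha_j}\);
\item the condition \(\re T_n(fg)=0\);
\item the relation \(f'=fg'\).
\end{itemize}
But you replace the series bookkeeping with a rigidity argument. The reflection \(g(z)=-\overline{g(1/\overline z)}\), together with pole cancellation, makes \(Rg\) and \(zg'R\) (anti-)self-reciprocal polynomials of degree at most \(2n\). So only the coefficients of \(z^0,\dots,z^n\) need to be compared.

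\textbf{What each buys.} Your approach avoids all limits and re-indexing. It also explains the shape of the formulas: why the numerators carry these symmetries, and why the derivative condition (double rather than simple zeros of \(R\)) is exactly what the \(zg'\) formula requires. The paper's approach instead yields the explicit identities \eqref{eq:vanishing r} and \eqref{eq:derivative vanishing r} for every \(r\) as by-products.

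Incidentally, in your framework the separate variation \(g\mapsto(1+\varepsilon)g\) is redundant. Anti-self-reciprocity of \(Rg\) already forces its \(z^n\) coefficient, which is \(2T_n(fg)\), to be purely imaginary.

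\textbf{Minor slips.} At order \(z^n\), both checks actually use \(a_n\in\mathbb R\), not \(b_0\in\mathbb R\). This holds for extremal \(f\), since \(M_n(\xi f)\le M_n(f)\) for all \(\xi\in\partial\mathbb D\).
\begin{itemize}
\item The \(z^n\) coefficient of \(zg'R\) is \(2na_n\), while that of \(W\) is \(2n\re a_n\).
\item Similarly, the \(z^n\) coefficient of \(Rg\) is \(2T_n(fg)-2i(\im a_n)b_0\).
\end{itemize}
Also, the conjugate term \(z^{n+1}\overline{P'(\overline z)}\) contributes only from \(z^{n+1}\) on, not from \(z^n\).
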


\noindent\textit{Remark}: The polynomial \(P\) was studied by Mart\'in et al. \cite{MartinSawyerTueroVukotic2015}, who showed that \(\re P\geq 0\) in \(\partial \mathbb D\) and \(\re P(\alpha_j)=0\) for \(1\leq j\leq N\).

Any extremal function \(f\) is an inner function, so it has a unimodular non-tangential limit \(f(e^{i\theta})\) for a.e. \(\theta\in \mathbb T\). Thus, there exists \(\varphi:\mathbb T\to\mathbb R\) such that \(f(e^{i\theta})\overset{a.e.}{=}e^{i\varphi(\theta)}\). The preceding result gives us simple formulas for this function involving the coefficients of \(f\).

\medskip

\begin{cor}\label{cor:formulas for varphi}
If \(f\) is extremal for \(M_n\) and \(f(e^{i\theta})\overset{a.e.}{=}e^{i\varphi(\theta)}\), then 
\[
    \varphi(\theta)=\frac{\im Q(e^{-i\theta})}{\re P(e^{-i\theta})},\qquad \textrm{and}\qquad \varphi'(\theta)=n-\frac{\re e^{-i\theta}P'(e^{-i\theta})}{\re P(e^{-i\theta})}.
\]
\end{cor}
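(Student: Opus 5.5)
The plan is to derive Corollary~\ref{cor:formulas for varphi} from Theorem~\ref{thm:X} by taking boundary limits in both identities. By \eqref{extremal}, \(g=-\sum_j\lambda_j\frac{1+\alpha_jz}{1-\alpha_jz}\) extends meromorphically across \(\partial\mathbb D\). Its only poles are the simple poles at \(\overline{\alpha_j}\). At every other point \(z=e^{i\theta}\) we have \(\re g(e^{i\theta})=0\), since each term \(\frac{1+\alpha_j z}{1-\alpha_j z}\) is purely imaginary on the circle. We choose \(\varphi(\theta)=\im g(e^{i\theta})\), which is a valid (real-analytic off the atoms) choice with \(f(e^{i\theta})=e^{i\varphi(\theta)}\). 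Then \(\varphi'(\theta)=\frac{d}{d\theta}\im g(e^{i\theta})=\re\big(e^{i\theta}g'(e^{i\theta})\big)\), and in fact \(e^{i\theta}g'(e^{i\theta})\) is real there, since \(\frac{d}{d\theta}g(e^{i\theta})=ie^{i\theta}g'(e^{i\theta})\) is purely imaginary. Any other admissible \(\varphi\) differs by a multiple of \(2\pi\), so this is the natural normalization. I would note the normalization explicitly; it is consistent with the formula, because the right-hand side is continuous off the zeros of \(\re P\).

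Next, the right-hand sides of Theorem~\ref{thm:X} are rational in \(z\) and \(\overline{z}\)-conjugates of polynomials. Written as \(z^nQ(1/z)-z^n\overline{Q(\overline z)}\) over \(z^nP(1/z)+z^n\overline{P(\overline z)}\), they are rational functions of \(z\) alone, hence meromorphic on \(\mathbb C\). By the identity theorem they agree with \(g\) and \(zg'\) on the circle away from poles. Now set \(z=e^{i\theta}\), so \(1/z=\overline z=e^{-i\theta}\) and \(\overline{P(\overline z)}=\overline{P(e^{-i\theta})}\). The common factor \(z^n\) cancels, and we get
\[
g(e^{i\theta})=\frac{Q(e^{-i\theta})-\overline{Q(e^{-i\theta})}}{P(e^{-i\theta})+\overline{P(e^{-i\theta})}}=\frac{2i\,\im Q(e^{-i\theta})}{2\re P(e^{-i\theta})},
\]
and taking imaginary parts yields the first formula.

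For the derivative, put \(w=e^{-i\theta}\). The numerator becomes
\[
z^{n-1}P'(w)+z^{n+1}\overline{P'(w)}=z^n\big(wP'(w)+\overline{wP'(w)}\big)=2z^n\re\big(wP'(w)\big),
\]
and the denominator is \(2z^n\re P(w)\). Hence \(e^{i\theta}g'(e^{i\theta})=n-\re(e^{-i\theta}P'(e^{-i\theta}))/\re P(e^{-i\theta})\), which is the second formula.

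The only delicate point is where \(\re P(e^{-i\theta})=0\). By the remark after Theorem~\ref{thm:X} this happens at the atoms, and possibly at finitely many other points, since \(\re P\) is a nonzero trigonometric polynomial. At such points the formulas hold by continuous extension. The finite exceptional set is harmless because the statement is only claimed almost everywhere.
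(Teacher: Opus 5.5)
Your proposal is correct and follows the same route as the paper: restrict the two identities of Theorem \ref{thm:X} to \(|z|=1\), use \(1/z=\overline{z}\) to reduce them to \(g(z)=i\im Q(\overline z)/\re P(\overline z)\) and the analogous expression for \(zg'(z)\), and then identify \(g(e^{i\theta})=i\varphi(\theta)\) and \(e^{i\theta}g'(e^{i\theta})=\varphi'(\theta)\). You also justify passing from \(\mathbb D\) to the circle (both sides are rational functions that agree on \(\mathbb D\)) and treat the finitely many zeros of \(\re P\), which supplies details the paper leaves implicit.
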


\rmk \textit{A priori} \(\varphi\) is only determined up to an integer multiple of \(2\pi \). For consistency, we always normalize so that \(f(0)\in\mathbb R\) and define \(\varphi\) using the radial limit of the branch of \(\log f\) whose value at zero is real.

Combining the second formula above with other properties of \(\varphi\), we obtain a lower bound on \(N\) in terms of the polynomial \(P\). 

\medskip

\begin{cor}\label{cor: lower bound on N}
Let \(f\) be extremal for \(M_n\), and let \(P\) be as above. If \(zP'(z)\) has \(m\) zeros in \(\mathbb D\) (counting multiplicities), then \(N\geq m\). If \(P'\) only vanishes in \(\mathbb D\), then \(N=n\). 
\end{cor}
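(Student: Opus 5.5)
The plan is to prove $N\geq m$ by the argument principle applied to $zP'(z)$ on $\partial\mathbb D$, with the variational data of \Cref{cor:formulas for varphi} supplying the geometric control. The second claim then follows at once. $P$ has degree exactly $n$ with leading coefficient $2a_0\neq 0$, so $zP'(z)$ has degree $n$. If all zeros of $P'$ lie in $\mathbb D$, then $m=(n-1)+1=n$, which gives $N\geq n$; combined with $N\leq n$ from Hummel et al., this yields $N=n$. So everything rests on the first claim.

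First I would set up notation on the circle. Write $w=e^{-i\theta}$, $u(\theta)=\re P(e^{-i\theta})$ and $v(\theta)=\im P(e^{-i\theta})$. Since $\frac{d}{d\theta}P(e^{-i\theta})=-iwP'(w)$, we get
\[
wP'(w)=-v'(\theta)+iu'(\theta).
\]
The formula for $\varphi'$ in \Cref{cor:formulas for varphi} then reads $\varphi'=n+v'/u$.

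Next I would use the explicit form \eqref{extremal}. On the circle, each term $\frac{1+\alpha_jz}{1-\alpha_jz}$ is purely imaginary and equal to a cotangent. Hence $\varphi$ is a sum of $-\lambda_j\cot$ terms, so $\varphi'>0$ away from the atoms and $\varphi$ blows up at each atom. By the remark after \Cref{thm:X}, $u\geq 0$ and $u$ vanishes at the atoms. Off the atoms this gives $nu+v'>0$, that is, $\re(wP')<n\re P$. At each atom $u$ attains its minimum $0$, so $u'=0$ there. Moreover $v'\ge 0$ there, since $v'/u\to+\infty$. Thus at every atom $wP'(w)$ lies on the closed negative real axis.

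Then I would count winding. Assume first that $zP'$ has no zeros on $\partial\mathbb D$; the general case follows by shrinking the circle slightly or by a perturbation argument. The winding number of $\theta\mapsto wP'(w)$ about $0$ is then $m$. The atoms split $\mathbb T$ into $N$ arcs, and on each arc the curve begins and ends on the negative real axis. The core claim is that on each arc the argument of $wP'$ increases by at most $2\pi$. Summing over the $N$ arcs then gives $2\pi m\leq 2\pi N$.

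The key obstacle is this per-arc bound. A full turn requires crossing the positive real axis, which happens at points where $u'=0$ and $-v'>0$. The constraint $nu+v'>0$ only forces $nu>-v'>0$ at such points, which does not immediately rule out several crossings. My first attempt would be to exploit monotonicity more strongly. On each arc $\varphi$ is an increasing bijection onto $\mathbb R$, and the first formula $\varphi=\im Q/\re P$ ties $v$ and $u$ to $Q$. I would use these facts to show that the quotient $wP'/P$ (equivalently, the function $nP-wP'$, whose real part is positive on the open arc) has argument change controlled by $\pi$ on each arc. I would then transfer this bound to $wP'$ via the relation $wP'=nP-(nP-wP')$. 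If that fails, the fallback is to bound the number of sign changes of $u'$ between consecutive atoms using the convexity-like behaviour of $u$ forced by $\varphi'>0$.
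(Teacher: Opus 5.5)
Your reduction of the second claim to the first matches the paper, and your setup is sound. The winding number of $\theta\mapsto wP'(w)$ is $m$, and at each atom $wP'(w)=-v'(\theta_j)$ lies on the closed negative real axis. But the first claim is not actually proved. The per-arc bound you call the key obstacle is the entire content of the corollary, and you leave it open.

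The route you try for it is also the wrong one. You look at crossings of the positive real axis, i.e.\ critical points of $u$ where $v'<0$. The only information you bring to bear on them is positivity of $\varphi'$ (your inequality $nu+v'>0$), which, as you observe, does not limit how many crossings occur. The fallbacks do not close the gap either. The identity $wP'=nP-(nP-wP')$ writes $wP'$ as a difference of two functions with nonnegative real part on the circle, which gives no control of its argument. And nothing is proposed that would bound the sign changes of $u'$ on an arc.

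The missing idea is to count crossings of the imaginary axis, and to use the convexity of $\varphi'$ rather than merely its positivity. Your identity $\varphi'=n+v'/u$ says
\[
\re\big(wP'(w)\big)=-v'(\theta)=u(\theta)\big(n-\varphi'(\theta)\big),
\]
so where $u>0$ the sign of $\re(wP')$ is that of $n-\varphi'$. By Lemma \ref{properties on boundary}(ii) (concretely, $\varphi'(\theta)=\sum_j\frac{\lambda_j}{2}\csc^2\frac{\theta-\theta_j}{2}$), $\varphi'$ is strictly convex on each arc and blows up at both ends. So $\{\varphi'<n\}$ meets each arc in at most one interval, and $\re(wP')$ has at most two zeros per arc, hence at most $2N$ on $\mathbb T$. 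A closed curve with winding number $m$ about $0$ must meet the imaginary axis at $2m$ distinct parameters, so $m\le N$.

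This is the paper's proof, and it also gives exactly your per-arc statement. On each arc the curve starts and ends on the negative real axis and makes at most one excursion into the open right half-plane, so its argument changes by $0$ or $\pm2\pi$.

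Under your standing assumption that $zP'$ has no zeros on $\partial\mathbb D$, the needed positivity $u>0$ off the atoms is automatic. At a non-atom zero $\theta_0$ of $u\ge 0$ one has $u'(\theta_0)=0$ and $v'(\theta_0)=u(\theta_0)(\varphi'(\theta_0)-n)=0$, so $wP'$ would vanish there. Similarly, $u'=0$ at each atom, so $\re(wP')=0$ at an atom would force $wP'=0$. Hence the atoms are not zeros of $\re(wP')$ either.
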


Many criteria are known for a polynomial to have all its zeros in \(\mathbb D\). As \(P\) is determined by the coefficients of the extremal function \(f\), these translate to conditions on said coefficients. If \(f=f_n\) then \(P'(z)\) only vanishes at zero.

The function \(f_n\) in \eqref{fn} is invariant under the rotations \(r_k(z)=e^{2k\pi i/n}z\), in the sense that \(f_n\circ r_k\equiv f_n\) for \(1\leq k\leq n\). For \(n\geq 2\), these rotations are the only holomorphic invariants of \(f_n\). Thus, it is interesting then to ask what functions \(\psi\in\textrm{Hol}(\mathbb D)\) may act as invariants of a generic extremal function. 

\medskip

\begin{thm}\label{thm:4}
Fix \(n\geq 3\) and let \(f\) be extremal for \(M_n\). If \(\psi\in\mathrm{Hol}(\mathbb D)\) satisfies \(f\circ\psi=f\) in \(\mathbb D\), then  \(\psi\in\mathrm{Aut}(\mathbb D)\). If \(N\geq 3\) then there exists \(a\in\mathbb D\) and an \(N\)th root of unity \(\xi\) such that
\[
    \psi(z)=\psi_{-a}(\xi\psi_{a}(z)),\qquad \psi_a(z)=\frac{z-a}{1-\overline{a}z}.
\]
In particular, \(\psi(a)=a\), and \(\psi^{\circ N}=\mathrm{id}\). If \(N=2\) then either \(\psi\) is of the form above, or \(\psi\) fixes \(\{\overline{\alpha_1},\overline{\alpha_2}\}\) and no other points in \(\overline{\mathbb D}\).
\end{thm}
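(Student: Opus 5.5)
The plan is to move the identity \(f\circ\psi=f\) to the level of Herglotz measures, show that \(\psi\) must be an automorphism, and then classify the automorphisms that can preserve a finite atomic measure. Write \(f=e^{-G}\) with
\[
G(z)=\sum_{j=1}^N\lambda_j\frac{1+\alpha_jz}{1-\alpha_jz},\qquad \re G>0 .
\]
The Herglotz measure of \(G\) is \(\mu=\sum_j\lambda_j\delta_{\overline{\alpha_j}}\).

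\emph{Step 1: reduce to an identity for \(G\).} From \(f\circ\psi=f\) we get \(G\circ\psi-G\in 2\pi i\mathbb Z\). Since this difference is continuous, it is a constant \(2\pi ik\). Taking real parts gives \(\re G\circ\psi=\re G\). Since \(f\) is nonconstant, \(\psi\) is nonconstant.

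\emph{Step 2: \(\psi\) is inner.} I would invoke the standard fact that if \(f\) is a nonconstant inner function and \(f\circ\psi\) is inner, then \(\psi\) is inner. Here \(f\circ\psi=f\) is inner, so \(\psi\) is inner.

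\emph{Step 3: \(\psi\) is an automorphism.} Since \(\psi\) is inner, the positive harmonic function \(\re G\circ\psi\) has Herglotz measure
\[
\mu\circ\psi=\sum_j\lambda_j\,\sigma_{\overline{\alpha_j}},
\]
where \(\sigma_\beta\) is the Aleksandrov--Clark measure of \(\psi\) at \(\beta\). This uses that \(\re\frac{\beta+\psi}{\beta-\psi}\) is the Poisson integral of \(\sigma_\beta\). Because \(\re G\circ\psi=\re G\), this measure equals \(\mu\), which is finitely atomic. Each \(\sigma_\beta\) must therefore be a finite atomic measure. A Clark measure that is finitely supported forces \(\psi\) to be a finite Blaschke product: otherwise \(\sigma_\beta\) has infinitely many atoms or a continuous part. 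If \(\psi\) has degree \(d\), then \(\sigma_\beta\) has exactly \(d\) atoms, located at the solutions of \(\psi=\beta\) on \(\partial\mathbb D\). The fibres over distinct \(\beta\) are disjoint, so \(\mu\circ\psi\) has \(dN\) atoms. Matching this with the \(N\) atoms of \(\mu\) gives \(d=1\), so \(\psi\in\mathrm{Aut}(\mathbb D)\). I expect this to be the main obstacle: citing the correct Clark-measure facts, or else proving them directly by counting the points where \(\psi\) meets \(\overline{\alpha_j}\) and using the absence of singular parts off those points. I do not yet see where the hypothesis \(n\geq 3\) enters. My guess is that it is used only to guarantee \(N\ge2\), so that \(f\) really has several atoms, or to rule out an exceptional small case.

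\emph{Step 4: the automorphism permutes the atoms.} Since \(\mu\circ\psi=\mu\), the automorphism \(\psi\) maps the atom set \(\{\overline{\alpha_j}\}\) to itself bijectively. It also matches the weights up to the factor \(|\psi'|\), though that refinement is not needed for the classification. Hence \(\psi^{\circ N!}\) fixes every \(\overline{\alpha_j}\).

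\emph{Step 5: the case \(N\ge3\).} A nonidentity Möbius automorphism fixes at most two boundary points. So \(\psi^{\circ N!}=\mathrm{id}\), and \(\psi\) has finite order; hence \(\psi\) is elliptic or the identity. Conjugating by \(\psi_a\), where \(a\) is the interior fixed point, puts \(\psi\) in the form \(\psi_{-a}(\xi\psi_a(z))\) with \(\xi\) a root of unity. Conjugating by \(\psi_a\) turns \(\psi\) into the rotation \(z\mapsto\xi z\), which acts freely on \(\partial\mathbb D\). All its orbits on the atom set therefore have size equal to the order \(k\) of \(\xi\), so \(k\mid N\) and \(\xi^N=1\). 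This gives \(\psi(a)=a\) and \(\psi^{\circ N}=\mathrm{id}\).

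\emph{Step 6: the case \(N=2\).} Either \(\psi\) swaps the two atoms or it fixes both.
\begin{itemize}
\item If \(\psi\) swaps them, then \(\psi^{\circ2}\) fixes both. If \(\psi^{\circ 2}\) were hyperbolic, \(\psi\) would commute with it and so preserve its attracting and repelling fixed points; this contradicts the swap. Hence \(\psi^{\circ2}=\mathrm{id}\), and \(\psi\) is an elliptic involution of the stated form with \(\xi=-1\).
\item If \(\psi\) fixes both atoms, then either \(\psi=\mathrm{id}\), which is of the stated form with \(\xi=1\), or \(\psi\) is hyperbolic with fixed points exactly \(\overline{\alpha_1},\overline{\alpha_2}\). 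A hyperbolic automorphism fixes no other point of \(\overline{\mathbb D}\), which gives the remaining alternative.
\end{itemize}
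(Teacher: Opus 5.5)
Your proof is correct, but it reaches each conclusion by a different route than the paper.

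\textbf{Showing \(\psi\in\mathrm{Aut}(\mathbb D)\).} The paper never uses Clark measures. It writes \(\log f=t\frac{h-1}{h+1}\) with \(h\) a degree-\(N\) Blaschke product (Lemma~\ref{lem:h is Blaschke}). It then turns \(f\circ\psi=f\) into \(h\circ\psi=\omega\circ h\) for an automorphism \(\omega\) fixing \(-1\). From this it concludes that \(\psi\) is proper, hence a finite Blaschke product (Lemma~\ref{proper blaschke}), and reads off \(\deg\psi=1\) from \(\deg(h\circ\psi)=N\deg\psi\). Your pull-back of the Herglotz measure, with the count of \(dN\) atoms in \(\sum_j\lambda_j\sigma_{\overline{\alpha_j}}\), is an equally valid substitute. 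Your Step 2 is in fact unnecessary: finite support of a single \(\sigma_\beta\) already makes \(\frac{\beta+\psi}{\beta-\psi}\) rational with vanishing real part on the circle, so \(\psi\) is a finite Blaschke product.

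\textbf{Periodicity.} The paper shows via \(SU(1,1)\) diagonalization that a finite-order automorphism has an interior fixed point \(a\). It evaluates at \(a\) to kill the additive constant, so that \(h\circ\psi=h\), and then invokes the Cassier--Chalendar theorem to get \(\psi^{\circ N}=\mathrm{id}\). Your argument is different: the conjugated rotation acts freely on \(\partial\mathbb D\), so every orbit in the invariant \(N\)-point atom set has size equal to the order of \(\xi\). This gives \(\xi^N=1\) with no external theorem and no need to control the constant.

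\textbf{The case \(N=2\).} The paper uses Brouwer's theorem to produce a third fixed point of \(\psi^{\circ2}\). You instead use that \(\psi\) commutes with \(\psi^{\circ2}\), so it cannot swap the attracting and repelling fixed points of a hyperbolic map. Both arguments work.

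In short, the paper's route stays inside the Blaschke-product framework it also uses for Theorem~\ref{thm:3}. Yours is more self-contained on the group-theoretic side, but it relies on standard Clark-measure facts and on the classification of finite-order M\"obius maps, which the paper proves by hand.

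Regarding \(n\ge 3\): the paper's argument does not invoke it either, so nothing is missing there.
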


\noindent\textit{Remark}: The condition \(N\geq 3\) is automatically satisfied for \(n\geq 100\), thanks to Theorem \ref{thm:1}. 

If an extremal function \(f\) has a holomorphic invariant \(\psi\), then it takes on structural properties analogous to those for Blaschke products investigated by Cassier \& Chalendar \cite{CassierChalendar2000}, whose results we use in the proof of Theorem \ref{thm:4}. For instance, \(\psi\) acts as a permutation of \(\{\overline{\alpha_1},\dots,\overline{\alpha_N}\}\) and it fixes a point \(a\in\mathbb D\) when \(N\geq 3\). If \(a=0\) then we can say much more. 

\medskip

\begin{cor}\label{cor:4.1}
Fix \(n\in\mathbb N\) and let \(f\) be extremal for \(M_n\). If there exists \(\psi\in\mathrm{Hol}(\mathbb D)\setminus\{\mathrm{id}\}\) such that \(\psi(0)=0\) and \(f\circ\psi= f\), then \(\psi\) is a rotation and  \(\gcd(N,n)>1\).  
\end{cor}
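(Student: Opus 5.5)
The plan has two parts. First I show that a holomorphic invariant \(\psi\) with \(\psi(0)=0\), \(\psi\ne\mathrm{id}\), must be a rotation. Then I show that a nontrivial rotational symmetry of an extremal function is incompatible with \(\gcd(N,n)=1\).

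\emph{Step 1: \(\psi\) is a rotation.} If \(n\geq 3\), Theorem \ref{thm:4} gives \(\psi\in\mathrm{Aut}(\mathbb D)\). An automorphism fixing \(0\) is a rotation \(\psi(z)=\xi z\) by the Schwarz lemma. For small \(n\) (and as a check in general), I would argue directly. Apply \(\psi\) to the formula \eqref{extremal} for \(f\). Since \(f\circ\psi=f\), we get \(\log f\circ\psi=\log f\) with the normalized branch, because both equal \(\log f(0)\) at \(0\). Hence the Herglotz-type function \(H(z)=\sum_j\lambda_j\frac{1+\alpha_jz}{1-\alpha_jz}\) satisfies \(H\circ\psi=H\). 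Now \(H\) is a rational function with simple poles exactly at \(\overline{\alpha_j}\) and is analytic on \(\mathbb D\). By Schwarz, \(|\psi(z)|\leq|z|\). If \(\psi\) were not a rotation, iterating \(\psi\) near \(0\) gives \(\psi^{\circ k}\to 0\) locally uniformly. Then \(H=H\circ\psi^{\circ k}\to H(0)\), so \(H\) is constant, which is a contradiction. So \(\psi(z)=\xi z\) with \(\xi\neq 1\) unimodular.

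\emph{Step 2: consequences of the rotation.} From \(H(\xi z)=H(z)\), the pole sets satisfy \(\xi^{-1}\{\overline{\alpha_j}\}=\{\overline{\alpha_j}\}\), and the residues force the weights to be preserved, so \(\lambda\) is constant on orbits. The set of poles is finite and invariant under multiplication by \(\xi\). Therefore \(\xi\) is a root of unity of some order \(q\geq 2\), and \(\{\alpha_j\}\) is a union of orbits of size exactly \(q\), so \(q\mid N\). Moreover \(f(\xi z)=f(z)\) means \(a_j\xi^j=a_j\) for all \(j\), so \(a_j=0\) unless \(q\mid j\).

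\emph{Step 3: the main obstacle, showing \(q\mid n\).} If \(q\nmid n\), then \(a_n=0\), and so \(M_n(f)=0\). But \(f_n\in\mathcal B_0\) has \(M_n(f_n)=2/e>0\), which contradicts extremality. So \(q\mid n\) and \(q\mid N\), giving \(\gcd(N,n)\geq q>1\). The delicate point is Step 1 for small \(n\), where Theorem \ref{thm:4} is not available. There I rely on the iteration argument, which uses only that \(H\) is nonconstant; this holds because \(N\geq1\) and \(\lambda_j>0\).
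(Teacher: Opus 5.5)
Your proof is correct, and it takes a different, more elementary route than the paper.

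The paper's proof is a direct appeal to Theorem \ref{thm:4}. That theorem's proof gets \(\psi\in\mathrm{Aut}(\mathbb D)\) from the representation \(\log f=t\frac{h-1}{h+1}\), with \(h\) a degree-\(N\) Blaschke product, plus a properness argument. It then gets \(\psi^{\circ N}=\mathrm{id}\) from the Cassier--Chalendar theorem. So \(\psi(z)=\xi z\) with \(\xi^N=1\), and averaging \(f\) over powers of \(\xi\) kills \(a_n\) when \(\gcd(n,N)=1\).

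You instead get the rotation from the Schwarz lemma and iteration: a non-rotation fixing \(0\) has iterates tending to \(0\) locally uniformly, which would make \(H\) constant. You get that \(\xi\) is a root of unity of order \(q\geq 2\) with \(q\mid N\) from the \(\xi\)-invariance of the finite pole set \(\{\overline{\alpha_1},\dots,\overline{\alpha_N}\}\) of the rational function \(H\). The closing coefficient argument is the same in both.

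Your version has three advantages:
\begin{itemize}
\item It is self-contained and uses only the atomic form \eqref{extremal}.
\item It needs no restriction on \(n\), whereas Theorem \ref{thm:4} is stated only for \(n\geq 3\).
\item It tracks the order of \(\xi\) correctly. The paper's identity \(\frac1N\sum_{j=1}^Nf(\xi^jz)=\sum_j a_{Nj}z^{Nj}\) holds only when \(\xi\) is a primitive \(N\)th root of unity; in general the average is \(\sum_j a_{qj}z^{qj}\). The paper's conclusion still survives, since \(q>1\), \(q\mid N\), and \(\gcd(n,N)=1\) force \(q\nmid n\).
\end{itemize}

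The paper's route, in exchange, is immediate once Theorem \ref{thm:4} is in hand, and that theorem describes all invariants, not only those fixing the origin.

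Your observation that the weights \(\lambda_j\) are constant on \(\xi\)-orbits is true but unnecessary.
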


Thus, any rotational invariant of an extremal function \(f\) would imply that \(N\) is at least as large as the smallest prime divisor of \(n\). If \(\psi^{\circ m}\neq\mathrm{id} \) for any divisor \(m>1\) of \(n\), then necessarily \(f\equiv f_n\) by \cite[Thm. 1(p)]{MartinSawyerTueroVukotic2015}. We do not show the existence of such an invariant.

The main result of \cite{MartinSawyerTueroVukotic2015} is a collection of conditions on extremal functions which are equivalent to the Krzy\.z conjecture. Here we give several new equivalent conditions.


\medskip

\begin{thm}\label{thm:3}
The Krzy\.z conjecture is true if and only if, whenever \(n\geq 3\) and \(f(z)=\sum_{j=0}^\infty a_jz^j\) is extremal for \(M_n\), any of the following are true:
\begin{itemize}
    \item[(1)] \(f(e^{i\theta}z)= f(z)\) in \(\mathbb D\) for some \(\theta\in (0,\frac{4\pi}{n})\).
    \item[(2)] \(\log f(z)=\log f(0)\) if and only if \(z=0\).
    \item[(3)] \(f'\) is non-vanishing in \(\mathbb D\setminus\{0\}\).
    \item[(4)] \(a_j=0\) for \(1\leq j\leq N-1\).
    \item[(5)] \(a_j=0\) for \(1\leq j\leq \lceil\frac{n-2}{3}\rceil\).
\end{itemize}
\end{thm}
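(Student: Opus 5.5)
Each direction of the equivalence is handled separately. If the Krzy\.z conjecture holds, every extremal function for \(M_n\) (normalized so that \(a_0>0\)) equals \(f_n\), so it suffices to check (1)--(5) for \(f_n\).

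\emph{Verification for \(f_n\).}
\begin{itemize}
\item[(1)] Rotation by \(\theta=2\pi/n\in(0,4\pi/n)\) leaves \(f_n\) invariant.
\item[(2)] We have \(\log f_n(z)-\log f_n(0)=\frac{z^n-1}{z^n+1}+1=\frac{2z^n}{1+z^n}\), which vanishes only at \(z=0\).
\item[(3)] The derivative is \(f_n'(z)=f_n(z)\cdot\frac{2nz^{n-1}}{(1+z^n)^2}\), which is nonzero on \(\mathbb D\setminus\{0\}\).
\item[(4)] and (5) Here \(f_n\) is a function of \(z^n\), so \(a_j=0\) for \(1\le j\le n-1\). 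This covers both ranges, since \(N=n\) and \(\lceil (n-2)/3\rceil\le n-1\).
\end{itemize}

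\emph{Converse.} I would reduce each condition to a criterion already available: the rotation criterion \cite[Thm. 1(p)]{MartinSawyerTueroVukotic2015}, Corollary \ref{cor:4.1}, or the characterization of \(f_n\) via vanishing of low coefficients from \cite{MartinSawyerTueroVukotic2015}.

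\emph{Condition (1).} Let \(\psi(z)=e^{i\theta}z\) be an invariant. The set of \(\theta\) with \(f(e^{i\theta}z)=f(z)\) is a closed subgroup of \(\mathbb T\). It is proper, because \(f\) is nonconstant. Hence it is generated by \(e^{2\pi i/m}\) for some \(m\ge1\), and \(\theta\in(0,4\pi/n)\) forces \(m>n/2\). By Theorem \ref{thm:4}/Corollary \ref{cor:4.1}, the rotation permutes \(\{\overline{\alpha_j}\}\). Therefore \(m\mid N\), and in particular \(m\le N\le n\). Invariance under \(z\mapsto e^{2\pi i/m}z\) means \(f(z)=h(z^m)\), so \(a_j=0\) unless \(m\mid j\). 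Since \(M_n(f)>0\) (it is at least \(2/e\)), we need \(a_n\neq 0\), hence \(m\mid n\). Together with \(m>n/2\) this gives \(m=n\). Then \(f=h(z^n)\) with \(h\in\mathcal B_0\), and maximizing \(\re a_n\) means maximizing the first coefficient of \(h\). That is Krzy\.z for \(n=1\), which is known, so \(f=f_n\).

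\emph{Conditions (2) and (3).} Here I would use the formula for \(\varphi'\) (Corollary \ref{cor:formulas for varphi}) together with the argument principle. Let \(g=\log f\). Condition (2) says \(g-g(0)\) has a single zero in \(\mathbb D\), at \(0\), of order \(k\) equal to the index of the first nonzero \(b_j\). I would count these zeros by the winding of \(g-g(0)\) on circles \(|z|=r\to1\). The boundary values are \(i\varphi\) with \(\varphi\) monotone on each arc between atoms, since \(\varphi'>0\) for an inner function. The winding should therefore be forced to be \(N\) (one revolution per pole of \(g\)). This gives \(b_1=\dots=b_{N-1}=0\), which is equivalent to (4).

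Condition (3) similarly says \(f'=fg'\) has only the zero at \(0\). Counting the zeros of \(g'\) via the denominator of \(zg'\) in Theorem \ref{thm:X} should give \(N-1\) zeros in total, again yielding (4).

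\emph{Conditions (4) and (5).} For (4), \(a_1=\dots=a_{N-1}=0\) gives \(b_1=\dots=b_{N-1}=0\). The \(b_j\) are the moments \(-2\sum_i\lambda_i\alpha_i^j\). Vanishing of these \(N-1\) power sums forces \(\alpha_i^N\) to be equal and the \(\lambda_i\) to be equal, by a Vandermonde/Newton identity argument. Hence \(f\) is invariant under rotation by \(2\pi/N\), and (1)-type reasoning (Corollary \ref{cor:4.1} with \(N\ge cn\)) gives \(N=n\), so \(f=f_n\).

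For (5), I expect the known result of \cite{MartinSawyerTueroVukotic2015} to apply: vanishing of \(a_1,\dots,a_{\lceil (n-2)/3\rceil}\) for an extremal \(f\) forces \(f=f_n\), via the variational inequality \(\re P\ge0\) on \(\partial\mathbb D\). If it is not directly available, I would instead show that \(\re P\ge0\) together with these vanishing coefficients forces \(P\) to have the form \(a_n+2a_0z^n+\cdots\), and then conclude from Corollary \ref{cor: lower bound on N}.

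The main obstacle is the winding-number count in (2)/(3): making rigorous that the boundary winding of \(g-g(0)\), or of the denominator of \(zg'\), equals \(N\) despite the singularities at the atoms.
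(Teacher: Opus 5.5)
Your forward direction is fine. Your argument for (1) is also correct, and it is more self-contained than the paper's. You get \(m>n/2\) and \(m\mid n\) from the rotation group alone, so \(f(z)=h(z^n)\) with \(h\) extremal for \(M_1\). The paper instead first shows \(\im a_n=0\) variationally to force \(\theta=2\pi/n\), and then routes through (5).

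The genuine gap is the final step of (4), into which you also funnel (2) and (3). Suppose you know \(f(z)=\exp\big(t\frac{\xi z^N-1}{\xi z^N+1}\big)\); your power-sum argument for this is fine, provided you also use \(\sum_i\lambda_i\alpha_i^{-j}=\overline{\sum_i\lambda_i\alpha_i^{j}}\). Invariance under \(z\mapsto e^{2\pi i/N}z\) together with \(a_n\neq0\) then only yields \(N\mid n\).
\begin{itemize}
\item[(a)] Your (1)-argument needs the angle \(2\pi/N\) to lie in \((0,4\pi/n)\), i.e.\ \(N>n/2\). That is exactly what is to be proved.
\item[(b)] Corollary \ref{cor:4.1} gives only \(\gcd(N,n)>1\), which is automatic here.
\item[(c)] Theorem \ref{thm:1} only bounds \(j=n/N\) by \(1/c\).
\end{itemize}
What must still be excluded is \(n=jN\) with \(j\geq2\). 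Writing \(\exp(t\frac{w-1}{w+1})=\sum_k\beta_k(t)w^k\), you need \(\sup_{t>0}|\beta_j(t)|<2/e\) for every \(j\geq 2\). Equivalently, a one-atom function is never extremal for \(M_j\) when \(j\geq 2\). The paper proves this using \(\beta_j(t)=(-1)^je^{-t}L_j^{(-1)}(2t)\), Rooney's bound on Laguerre polynomials for \(j\geq5\), and explicit maximization for \(j=2,3,4\). Your proposal has nothing in its place.

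Condition (5) is also unproved. You defer to an unspecified result of \cite{MartinSawyerTueroVukotic2015}, and your fallback is empty. Vanishing of \(a_1,\dots,a_k\) merely deletes some middle coefficients of \(P\) and gives no reason for the zeros of \(P'\) to lie in \(\mathbb D\). Even then, Corollary \ref{cor: lower bound on N} only gives \(N=n\), which does not by itself imply the conjecture.

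The missing idea is to work with \(h\). Since \(f'=2tfh'/(h+1)^2\), the hypothesis makes \(h\) vanish to order at least \(\frac{n+1}{3}\) at \(0\). Hence \(h^3=O(z^{n+1})\) and \(f\equiv e^{-t}\big(1+2th+2t(t-1)h^2\big)\bmod z^{n+1}\). With \(h=\sum c_jz^j\), the bound \(\sum|c_j|^2\leq1\) and Cauchy--Schwarz give \(|a_n|\leq 2te^{-t}\big(|c_n|+|t-1|(1-|c_n|^2)\big)\). Optimizing in \(t\) yields \(|a_n|\leq 2/e\), with equality only if \(t=1\) and \(h=\xi z^n\).

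Finally, the winding-number obstacle you flag for (2) and (3) disappears if you write \(g=t\frac{h-1}{h+1}\) with \(h\) a degree-\(N\) Blaschke product and \(h(0)=0\) (Lemma \ref{lem:h is Blaschke}). Then \(g=g(0)\) exactly at the \(N\) zeros of \(h\). Likewise \(g'=2th'/(h+1)^2\) vanishes exactly at the \(N-1\) zeros of \(h'\) in \(\mathbb D\).
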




\medskip

Finally, we show that for the polynomial \(P\) in Theorem \ref{thm:X}, if \(\re P\) has \(n\) zeros on \(\partial \mathbb D\) (as is true when \(f=f_n\)) then relatively simple formulas hold for the coefficients of \(f\). Recall that \(\re P\) vanishes at \(\alpha_1,\dots,\alpha_N\in\partial \mathbb D\), so what follows is valid when \(N=n\) but it is also true in a more general setting. Similar formulas were identified by Agler \& McCarthy \cite[Prop. 6.4]{AglerMcCarthy2021} in the special case when \(N=n\), though our techniques for proving them differ considerably.

\medskip

\begin{thm}\label{thm:2}
Fix \(n\in\mathbb N\) and let \(f(z)=\sum_{j=0}^\infty a_jz^j\) be an extremal function for \(M_n\) as in \eqref{extremal}, normalized so that \(a_0,a_n>0\). If \(\re P\) vanishes at distinct points \(z_1,\dots, z_n\in\partial \mathbb D\), then 
\[
    f(z)=a_0\prod_{j=1}^n(1-z_jz)^2\mod z^{n+1}.
\]
In particular, \(\prod_{j=1}^nz_j=(-1)^n\) and if \(e_1,\dots, e_n\) denote the elementary symmetric polynomials in \(n\) variables, then
\[
    a_n=a_0\bigg(2+\sum_{k=1}^{n-1}|e_k(z_1,\dots,z_n)|^2\bigg).
\]
\end{thm}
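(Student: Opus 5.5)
The plan is to recognise \(\re P\) on \(\partial\mathbb D\) as a nonnegative trigonometric polynomial of degree \(n\), and to factor it using the hypothesis that it has \(n\) distinct zeros. The input I will use is the result of Mart\'in et al.\ \cite{MartinSawyerTueroVukotic2015} quoted after Theorem \ref{thm:X}: \(\re P\geq 0\) on \(\partial\mathbb D\).

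First I clear denominators. Write \(\widetilde P(z)=\overline{P(\overline z)}\) and set
\[
    R(z)=z^nP(z)+z^n\widetilde P(1/z)=2a_0z^{2n}+\cdots+2a_{n-1}z^{n+1}+2a_nz^n+2\overline{a_{n-1}}z^{n-1}+\cdots+2\overline{a_0}.
\]
This uses \(a_n\in\mathbb R\). The polynomial \(R\) has degree exactly \(2n\) because \(a_0>0\), and it satisfies \(R(z)=2z^n\re P(z)\) for \(|z|=1\). Since \(\re P\geq0\) on the circle, each zero \(z_j\) of \(\re P\) on \(\partial\mathbb D\) is a zero of \(R\) of even multiplicity, so at least \(2\). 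Having \(n\) distinct such zeros therefore accounts for all \(2n\) roots of \(R\), which gives
\[
    R(z)=2a_0\prod_{j=1}^n(z-z_j)^2 .
\]

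Next I read off the coefficients by reversing the polynomial. The coefficient of \(z^k\) in \(z^{2n}R(1/z)=2a_0\prod_j(1-z_jz)^2\) equals the coefficient of \(z^{2n-k}\) in \(R\). That coefficient is \(2a_k\) for \(0\le k\le n\) (for \(k=n\) this uses \(a_n\in\mathbb R\)). This yields \(f(z)\equiv a_0\prod_j(1-z_jz)^2 \pmod{z^{n+1}}\).

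To pin down the sign of \(\prod_j z_j\), I use an elementary identity rather than just comparing constant terms, since the latter only gives \(\prod z_j^2=1\). For \(|z|=|z_j|=1\) one has
\[
    \frac{(z-z_j)^2}{zz_j}=-|z-z_j|^2 .
\]
Hence on the circle
\[
    2\re P(z)=z^{-n}R(z)=2a_0(-1)^n\Big(\prod_j z_j\Big)\prod_j|z-z_j|^2 .
\]
Since the left side is \(\geq0\) and not identically zero, we get \((-1)^n\prod_j z_j=1\).

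Finally, I expand \(q(z)=\prod_j(1-z_jz)=\sum_k(-1)^ke_kz^k\), writing \(e_k=e_k(z_1,\dots,z_n)\). Then
\[
    a_n=a_0\sum_{k=0}^n(-1)^ne_ke_{n-k}.
\]
Because the \(z_j\) are unimodular, \(e_{n-k}=e_n\overline{e_k}\), and \(e_n=(-1)^n\) by the previous step. Each term becomes \(|e_k|^2\), so \(a_n=a_0\sum_{k=0}^n|e_k|^2\). Using \(|e_0|=|e_n|=1\), this is \(a_0\big(2+\sum_{k=1}^{n-1}|e_k|^2\big)\).

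The only real subtlety is the root count. Everything hinges on \(\re P\ge0\) forcing every boundary zero to be at least double, together with \(\deg R=2n\), so that the factorization is exact with no leftover roots. The rest is bookkeeping with conjugate-reciprocal polynomials.
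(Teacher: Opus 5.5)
Your proof is correct, and it takes a more direct route than the paper's.

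The paper applies the Fej\'er--Riesz lemma to write \(\re P(e^{i\theta})=|p(e^{i\theta})|^2\), with \(p\) of degree \(n\) and zero-free in \(\mathbb D\). It observes that the \(n\) boundary zeros must then be all the roots of \(p\), so \(p=c\prod_j(z-z_j)\). It then expands \(|p|^2\) by Vieta's formulas and matches Fourier coefficients against \(a_n+\sum_{j=1}^n(a_{n-j}e^{ij\theta}+\overline{a_{n-j}}e^{-ij\theta})\). From the top coefficient it determines \(c^2\), arriving at \(a_j=(-1)^ja_0\sum_{k=0}^j e_{j-k}e_k\). Finally it reads off the sign of \(\prod_j z_j\) from the resulting formula for \(a_n\), using \(a_0,a_n>0\).

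You instead do the root count on the self-inversive polynomial \(R\) of degree \(2n\), where nonnegativity forces every boundary zero to be at least double. So \(R=2a_0\prod_j(z-z_j)^2\) exactly, and the congruence for \(f\) falls out by reversing \(R\). This needs no Fej\'er--Riesz factor, no constant \(c\), and no knowledge of the sign of \(\prod_j z_j\) at that stage.

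Your sign argument via \((z-z_j)^2/(zz_j)=-|z-z_j|^2\) is valid. Reading the sign off from \(a_n>0\) in \(a_n=a_0(-1)^ne_n\sum_{k=0}^n|e_k|^2\) would work equally well; your last step produces that formula before the sign is used, and this is essentially the paper's route.

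The one step you leave implicit deserves a sentence. A zero of the nonnegative real-analytic function \(\theta\mapsto\re P(e^{i\theta})\) has even order. This order equals the multiplicity of \(z_j\) as a root of \(R\), because \(R(e^{i\theta})=2e^{in\theta}\re P(e^{i\theta})\) and \(e^{i\theta}-z_j\) vanishes to exactly first order in \(\theta\).

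What the paper's setup buys in return is generality. A Fej\'er--Riesz factor exists for every extremal function, and the coefficient matching, before \(|z_j|=1\) is used, expresses \(a_0,\dots,a_n\) as autocorrelations of the coefficients of \(p\). Equivalently, \(f\equiv z^np(1/z)\,\overline{p(\overline z)}\pmod{z^{n+1}}\) always holds. It is the hypothesis that all zeros of \(p\) lie on \(\partial\mathbb D\) that turns this into the perfect square \(a_0\prod_j(1-z_jz)^2\).
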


\rmk As mentioned above, if \(\re P(z_j)=0\) for \(1\leq j\leq n\) then  \(\{\alpha_1,\dots,\alpha_N\}\subseteq\{z_1,\dots, z_n\}\).

Under these conditions, the coefficients of \(\log f\) take on especially simple forms.

\medskip

\begin{cor}\label{cor:wow!}
Let \(f\) be extremal for \(M_n\) and write \(\log f(z)=\sum_{j=0}^\infty b_jz^j\). If \(\re P\) vanishes at distinct points \(z_1,\dots, z_n\in\partial \mathbb  D\), then 
\[
    b_k=-\frac{2}{k}\sum_{j=1}^nz_j^k,\qquad 1\leq k\leq n.
\]
\end{cor}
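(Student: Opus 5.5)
Corollary \ref{cor:wow!} follows from Theorem \ref{thm:2} by taking logarithms of the truncated product. First I normalize as in Theorem \ref{thm:2}, so that \(a_0,a_n>0\). This is harmless: by the remark after Corollary \ref{cor:formulas for varphi}, \(f(0)\in\mathbb R\), so \(b_0=\log a_0\) is real. Rotating \(z\) to make \(a_n>0\) rescales \(b_k\) and the points \(z_j\) compatibly, since \(P\) transforms accordingly. Theorem \ref{thm:2} then gives
\[
    f(z)=a_0\prod_{j=1}^n(1-z_jz)^2+O(z^{n+1}).
\]
Set \(h(z)=a_0\prod_{j=1}^n(1-z_jz)^2\). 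Since \(h(0)=a_0\neq 0\), both \(f/h\) and \(h\) have nonvanishing constant terms near \(0\). We may therefore write \(f=h\,(1+O(z^{n+1}))\).

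Next, taking the branch of \(\log\) with \(\log f(0)=\log a_0\) real, we get
\[
    \log f(z)=\log a_0+2\sum_{j=1}^n\log(1-z_jz)+\log\big(1+O(z^{n+1})\big).
\]
Here the last term is \(O(z^{n+1})\), so it does not affect the coefficients \(b_1,\dots,b_n\). Expanding \(\log(1-z_jz)=-\sum_{k\geq1}z_j^kz^k/k\), which is valid for \(|z|<1\) because \(|z_j|=1\), and comparing the coefficients of \(z^k\) for \(1\le k\le n\), we obtain
\[
    b_k=-\frac{2}{k}\sum_{j=1}^nz_j^k.
\]

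The key point is that the congruence mod \(z^{n+1}\) passes through \(\log\) because \(a_0\neq0\). No step is a real obstacle once Theorem \ref{thm:2} is available; the only care needed is in the normalization of \(a_n\) and the choice of branch at \(0\).
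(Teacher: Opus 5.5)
Your proposal is correct and matches the paper's argument: apply Theorem \ref{thm:2}, pass the congruence \(f\equiv a_0\prod_{j=1}^n(1-z_jz)^2 \bmod z^{n+1}\) through the logarithm (the paper says the first \(n+1\) coefficients of \(\log f\) depend only on those of \(f\)), then expand \(\log(1-z_jz)\) and compare coefficients. Your rotation remark is unnecessary and slightly inaccurate. Extremality with \(a_0>0\) already forces \(a_n>0\), whereas replacing \(f\) by \(f(\xi z)\) sends \(P\) to \(\xi^nP(\cdot/\xi)\), so the zeros of \(\re P\) simply rotate only when \(\xi^n\in\mathbb R\).
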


In conjunction with the formulas for \(b_1,\dots, b_n\) obtained directly from \eqref{extremal}, the identities above define a linear system for \(\lambda_1,\dots,\lambda_n\) that can be solved to write each \(\lambda_j\) as a function of \(z_1,\dots,z_n\). 

In general, one can prove that \(|b_0|\leq 2n\) and \(|b_j|\leq 2|b_0|\leq 4n\) for \(j\in\mathbb N \) (see Lemma \ref{lower bound a_0}). Corollary \ref{cor:wow!} shows that these bounds can be improved under the preceding hypotheses. 

\begin{cor}
Let \(f\) be extremal for \(M_n\), and write \(\log f=\sum_{j=0}^\infty b_jz^j\). If \(\re P\) vanishes at \(n\) distinct points on \(\partial \mathbb D\), then \(|b_j|\leq 2n/j\) for \(1\leq j\leq n\). In particular, \(|b_n|\leq 2\) and equality holds if and only if the zeros of \(\re P\) are equidistributed.
\end{cor}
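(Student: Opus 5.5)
The bound follows directly from Corollary \ref{cor:wow!}. Under the hypothesis, \(\re P\) vanishes at distinct points \(z_1,\dots,z_n\in\partial\mathbb D\), and for \(1\leq j\leq n\) we have
\[
    b_j=-\frac{2}{j}\sum_{k=1}^n z_k^j.
\]
Each \(z_k\) is unimodular, so the triangle inequality gives \(\bigl|\sum_{k=1}^n z_k^j\bigr|\leq n\). Hence \(|b_j|\leq 2n/j\). Taking \(j=n\) yields \(|b_n|\leq 2\).

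The substance lies in the equality case. We have \(|b_n|=2\) exactly when \(\bigl|\sum_k z_k^n\bigr|=n\). Since the \(z_k^n\) are unimodular, this happens if and only if all the \(z_k^n\) coincide: \(z_k^n=\omega\) for a single \(\omega\in\partial\mathbb D\).

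Now the \(z_k\) are \(n\) distinct points, and all are \(n\)th roots of \(\omega\). The equation \(z^n=\omega\) has exactly \(n\) roots, so \(\{z_1,\dots,z_n\}\) is the full set of \(n\)th roots of \(\omega\). That set is a rotated copy of the \(n\)th roots of unity, i.e. the points are equidistributed on the circle.

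Conversely, suppose the \(z_k\) are equidistributed, \(z_k=\zeta e^{2\pi i k/n}\). Then \(z_k^n=\zeta^n\) for every \(k\), so \(\bigl|\sum_k z_k^n\bigr|=n\) and \(|b_n|=2\).

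One small point needs care: Theorem \ref{thm:2} gives \(\prod_k z_k=(-1)^n\). This is consistent with equidistribution, and it forces \(\omega=-1\), though that is not needed for the equivalence.

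I expect no real obstacle here. The only thing to check is that "equidistributed" is read as "a rotation of the \(n\)th roots of unity". Under that reading the equality characterization is exactly the rigidity of equality in the triangle inequality, combined with the distinctness of the \(z_k\).
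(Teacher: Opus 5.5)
Your proof is correct and is the argument the paper intends. The paper gives no separate proof and treats the bound as an immediate consequence of Corollary~\ref{cor:wow!} and the triangle inequality; your equality case (all $z_k^n$ coincide, so the $n$ distinct $z_k$ must be the full set of $n$th roots of that common value) supplies exactly the detail the paper leaves implicit.
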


Each of the theorems above is proved using different (though often related) techniques. So, the next five sections each begin with a brief overview of relevant preliminary material, followed by a proof of the corresponding main result. At the end of the paper, we collect assorted results that play no role in our major proofs but which may be of interest to the community.



The author gratefully acknowledges the support of an NSERC graduate research scholarship, as well as funding from the University of Toronto. The author also sincerely thanks his PhD supervisor, Prof. Ignacio Uriarte-Tuero, for providing consistent encouragement, valuable advice, and engaging conversation throughout the preparation of this paper.

\section{Oscillatory radial limits and the proof of Theorem \ref{thm:1}}

Fix \(n\in\mathbb N\) and let \(f(z)=\sum_{j=0}^\infty a_jz^j\) be an extremal function for \(M_n\). For \(\alpha_1,\dots,\alpha_N\) as in \eqref{extremal} write \(\alpha_j=e^{-i\theta_j}\). Direct computations show that the radial limit of \(\log f\) exists for all arguments in \([0,2\pi)\setminus\{\theta_1,\dots,\theta_N\}\) and it is given by
\begin{equation}\label{eq:radial_limit}
    \log f(e^{i\theta}):=\lim_{r\to 1^-}\log f(re^{i\theta})=-\sum_{j=1}^N\lambda_j\frac{1+e^{i(\theta-\theta_j)}}{1-e^{i(\theta-\theta_j)}}=-i\sum_{j=1}^N\lambda_j\cot\bigg(\frac{\theta-\theta_j}{2}\bigg).
\end{equation}
In fact, some more work shows that \(f(re^{i\theta_j})\to0\) as \(r\to 1^-\) for \(1\leq j\leq N\), so the radial limit of \(f\) exists everywhere. Taking \(\varphi(\theta)=-i\log f(e^{i\theta})\), away from \(\theta_1,\dots,\theta_N\) we have \(f(e^{i\theta})=e^{i\varphi(\theta)}\). Using Cauchy's integral formula, we can recognize the \(n\)th coefficient of \(f\) as a Fourier coefficient of this radial data:
\[
    a_n=\frac{1}{2\pi}\int_0^{2\pi}e^{i\varphi(\theta)}e^{-in\theta}d\theta.
\]
By assumption \(f\) is extremal, so we have \(|a_n|\geq \re a_n=M_n(f)\geq M_n(f_n)=2/e\). It follows that
\begin{equation}\label{eq:integral}
    \frac{2}{e}\leq \frac{1}{2\pi}\bigg|\int_0^{2\pi}e^{i\varphi(\theta)}e^{-in\theta}d\theta\bigg|.    
\end{equation}

Our strategy is to bound the integral above by a multiple of \(N/n\), and to this end we adapt the techniques of Horowitz and Ermers \cite{Ermers1990,Horowitz1978} who proved uniform bounds on \(|a_n|\) using oscillatory integral methods. First we collect some properties of \(\varphi\). The following facts are almost immediate from \eqref{eq:radial_limit} and the definition of \(\varphi\), so for brevity we omit the proof. 

\medskip

\begin{lem}\label{properties on boundary}
Let \(\varphi\) be as above and assume that \(0\leq \theta_1<\cdots<\theta_N<2\pi\). Then
\begin{enumerate}
    \item[(i)] On each \((\theta_j,\theta_{j+1})\), the functions \(\varphi\) and \(\varphi''\) are strictly increasing from \(-\infty\) to \(\infty\).
    \item[(ii)] On each \((\theta_j,\theta_{j+1})\), the function \(\varphi'\) is convex and bounded below by a positive constant.
\end{enumerate}
\end{lem}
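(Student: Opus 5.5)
We write \(\varphi(\theta)=-\sum_{j=1}^N\lambda_j\cot\big(\frac{\theta-\theta_j}{2}\big)\) away from the atoms; this follows from \eqref{eq:radial_limit} and \(\varphi=-i\log f(e^{i\theta})\). The plan is to differentiate term by term and examine each summand separately. Every summand \(h_j(\theta)=-\lambda_j\cot(\frac{\theta-\theta_j}{2})\) is smooth on \(\mathbb R\setminus(\theta_j+2\pi\mathbb Z)\). Since the \(\theta_k\) are distinct, on an arc \((\theta_j,\theta_{j+1})\) (indices mod \(N\), with the last arc wrapping around through \(2\pi\)) every \(h_k\) is smooth. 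Only \(h_j\) and \(h_{j+1}\) blow up at the endpoints.

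\textbf{The function \(\varphi\).} We have \(h_k'(\theta)=\frac{\lambda_k}{2}\csc^2\big(\frac{\theta-\theta_k}{2}\big)\geq\frac{\lambda_k}{2}\), so \(\varphi'\geq\frac12\sum_k\lambda_k>0\). This already gives the positive lower bound in (ii) and shows that \(\varphi\) is strictly increasing. As \(\theta\to\theta_j^+\), \(\cot(\frac{\theta-\theta_j}{2})\to+\infty\), so \(h_j\to-\infty\). As \(\theta\to\theta_{j+1}^-\), \(h_{j+1}\to+\infty\). All other terms stay bounded near these endpoints, which gives the limits \(-\infty\) and \(+\infty\) for \(\varphi\).

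\textbf{The functions \(\varphi'\) and \(\varphi''\).} Writing \(u=\frac{\theta-\theta_k}{2}\), we get
\[
h_k''=-\tfrac{\lambda_k}{2}\csc^2u\cot u,
\qquad
h_k'''=\tfrac{\lambda_k}{4}\csc^2u\,(2\cot^2u+\csc^2u)>0 .
\]
Hence \(\varphi'''>0\) on the arc, so \(\varphi''\) is strictly increasing and \(\varphi'\) is convex, which is (ii). For the limits of \(\varphi''\): near \(\theta_j^+\) we have \(u\to0^+\), so \(h_j''\to-\infty\). Near \(\theta_{j+1}^-\) we have \(u\to0^-\) (equivalently \(u\to\pi^-\) after shifting by \(2\pi\)), so \(\cot u\to-\infty\) and \(h_{j+1}''\to+\infty\). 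The remaining terms are bounded near each endpoint. This gives (i) for \(\varphi''\).

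\textbf{Main obstacle.} The only delicate point is bookkeeping at the wrap-around arc \((\theta_N,\theta_1+2\pi)\) and the case \(N=1\). In the latter case the single arc has both endpoints at the same atom, approached from opposite sides. The \(2\pi\)-periodicity of \(\cot(\cdot/2)\) handles both situations by the same sign analysis as above.
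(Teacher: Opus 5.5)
Your proof is correct, and it is exactly the routine verification the paper skips as ``almost immediate'' from \eqref{eq:radial_limit}. You write \(\varphi=-\sum_k\lambda_k\cot\big(\frac{\theta-\theta_k}{2}\big)\), differentiate term by term to get \(\varphi'\geq\frac12\sum_k\lambda_k\) and \(\varphi'''>0\), and combine these with the one-sided blow-ups of \(h_j\) and \(h_{j+1}\) at the endpoints of each arc, which establishes everything (including strict convexity of \(\varphi'\), which the paper later relies on).
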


Following \cite{Ermers1990,Horowitz1978}, for constants \(k_1>1\) and \(k_2\in(0,1)\) we set \(K_1=\{\theta\in [0,2\pi)\;|\; \varphi'(\theta)>k_1n\}\) and \(K_2=\{\theta\in [0,2\pi)\;|\; \varphi'(\theta)<k_2n\}\). On \(K_1\) the integrand in \eqref{eq:integral} is oscillatory, and we can exploit cancellation. On \(K_2\) we can integrate by parts. The next estimate given by Ermers \cite[(2.2)]{Ermers1990} shows that the untreated region \([0,2\pi)\setminus(K_1\cup K_2)\) is small.

\begin{lem}\label{lem:ermers}
Let \(K_1\) and \(K_2\) be as above. Then 
\[
    2\pi\leq  \frac{k_1+k_2}{k_2}|K_1|+|K_2|.
\]
\end{lem}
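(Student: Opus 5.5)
The plan is to prove the inequality one arc at a time and then sum. Put \(M=[0,2\pi)\setminus(K_1\cup K_2)=\{k_2n\leq\varphi'\leq k_1n\}\). Since \(2\pi=|K_1|+|K_2|+|M|\), the claim is equivalent to
\[
    |M|\leq \frac{k_1}{k_2}|K_1|.
\]
It therefore suffices to prove \(|M\cap I|\leq \frac{k_1}{k_2}|K_1\cap I|\) on each arc \(I=(\theta_j,\theta_{j+1})\), indices taken cyclically.

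On a fixed arc \(I\), Lemma \ref{properties on boundary} says \(\varphi'\) is convex and tends to \(+\infty\) at both endpoints. Hence \(\{\varphi'<k_2n\}\cap I\) is a (possibly empty) open interval \(L=(c,d)\), and \(\{\varphi'\le k_1n\}\cap I\) is a closed interval \(J=[a,b]\supseteq L\). So \(M\cap I=J\setminus L\) splits into a left piece \([a,c]\) and a right piece \([d,b]\), and \(K_1\cap I=(\theta_j,a)\cup(b,\theta_{j+1})\). I would compare each piece of \(M\) with the adjacent piece of \(K_1\): \([d,b]\) against \((b,\theta_{j+1})\), and \([a,c]\) against \((\theta_j,a)\). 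When \(L=\emptyset\), \(c=d\) is the minimiser of \(\varphi'\), and the same split works with \(\varphi'(d)\geq k_2n\).

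The comparison on one side, say the right, is the main obstacle. I expect convexity of \(\varphi'\) alone to be insufficient. Convexity only gives \emph{lower} bounds on the growth of \(\varphi'\) beyond \(b\), whereas we need \(\varphi'\) to take a definite amount of time to blow up after reaching \(k_1n\). So I would use the explicit form coming from \eqref{eq:radial_limit}:
\[
    \varphi'(\theta)=\sum_{k}\frac{\lambda_k}{2\sin^2((\theta-\theta_k)/2)}.
\]
My first attempt is to set \(u=1/\varphi'\) on the right piece. Then \(u(d)\leq 1/(k_2n)\), \(u(b)=1/(k_1n)\), and \(u\to 0\) at \(\theta_{j+1}\). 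The aim is to show that \(u\) decays to zero no faster than linearly at the rate it had on \([d,b]\), which would give
\[
    \frac{b-d}{\theta_{j+1}-b}\leq \frac{u(d)-u(b)}{u(b)}\leq\frac{k_1-k_2}{k_2}\leq \frac{k_1}{k_2}.
\]
This needs a suitable convexity of \(u\) on the part of \(I\) where \(\varphi'\) increases. For a single atom \(u\) is a multiple of \(\sin^2\), which is convex near its zero. For several atoms I would try to show the relevant one-sided convexity of \(1/\varphi'\), or of some power of it, using that the terms \(\sin^2((\theta-\theta_k)/2)/\lambda_k\) combine like a harmonic mean. If that fails, I would fall back on Ermers' argument \cite[(2.2)]{Ermers1990} verbatim, which the paper cites for exactly this inequality.

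Once the one-sided estimate holds on both sides of every arc, I sum over the arcs to get \(|M|\leq\frac{k_1}{k_2}|K_1|\), and adding \(|K_1|+|K_2|\) gives the stated bound \(2\pi\leq\frac{k_1+k_2}{k_2}|K_1|+|K_2|\).
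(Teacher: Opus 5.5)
Your reformulation \(2\pi\le\frac{k_1+k_2}{k_2}|K_1|+|K_2| \iff |M|\le\frac{k_1}{k_2}|K_1|\) is correct. The proof breaks at the localization step.

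\textbf{The per-arc reduction is false.} The bound \(|M\cap I|\le\frac{k_1}{k_2}|K_1\cap I|\), and a fortiori your one-sided comparison of \([d,b]\) with \((b,\theta_{j+1})\), fails for general atomic singular inner functions. Nothing in your argument uses extremality, so no convexity property of \(1/\varphi'\), or of a power of it, can supply it.

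Take an atom of weight \(\Lambda\) at \(\theta=0\), atoms of weight \(\eta\) at \(\theta=\pm\pi/2\), and \(k_2n=\Lambda/2\), \(k_1n=\Lambda\) (the paper's ratio \(k_1/k_2=2\)). On \(I=(\pi/2,3\pi/2)\) the heavy atom alone contributes \(\Lambda/(2\sin^2(\theta/2))\in[\Lambda/2,\Lambda)\), so \(K_2\cap I=\emptyset\). For fixed \(\delta>0\) and \(\eta\) small, \(\varphi'\le\Lambda\) on \([\pi/2+\delta,3\pi/2-\delta]\). Hence \(|M\cap I|\ge\pi-2\delta\) while \(|K_1\cap I|\le 2\delta\). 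The lemma survives only because \((-\pi/2,\pi/2)\subset K_1\): the \(K_1\)-mass that pays for \(M\cap I\) lives in other arcs.

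Your mechanism already fails for a single atom of weight \(\lambda_k\). There \(1/\varphi'=(1-\cos(\theta-\theta_k))/\lambda_k\) is concave on the middle half of the arc. With \(k_2n=\lambda_k/2\) and \(k_1/k_2=10/9\), the ratio \((b-d)/(\theta_{j+1}-b)\approx 0.26\) exceeds the value \((k_1-k_2)/k_2=1/9\) that convexity would force.

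Falling back on Ermers is just the citation the paper gives; the paper does not prove the lemma either. Its remark does point to the estimate \(|\varphi(\theta)-\varphi(\vartheta)|\le|\theta-\vartheta|\sqrt{\varphi'(\theta)\varphi'(\vartheta)}\), and that is the missing idea.

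\textbf{The global argument.} The estimate holds for any two non-atom points, including points separated by atoms, with \(|\theta-\vartheta|\) the length of either arc joining them. This follows because \(\varphi=-\sum_k\lambda_k\cot(\frac{\theta-\theta_k}{2})\) is single-valued off the atoms, \(\cot x-\cot y=\frac{\sin(y-x)}{\sin x\sin y}\), \(|\sin x|\le|x|\), and then Cauchy--Schwarz over \(k\).

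Let \([a_{j_1},b_{j_1}],\dots,[a_{j_m},b_{j_m}]\) be the nonempty sets \(\{\varphi'\le k_1n\}\cap(\theta_j,\theta_{j+1})\) in cyclic order, with \(j_{m+1}=j_1\); if there are none, \(M=\emptyset\). Then \(\varphi'=k_1n\) at every endpoint. Up to the atoms, \(K_1\) is the disjoint union of the arcs \((b_{j_i},a_{j_{i+1}})\), of lengths \(\ell_i\). Therefore
\[
k_2n|M|\le\sum_{i=1}^m\int_{a_{j_i}}^{b_{j_i}}\varphi'=\sum_{i=1}^m\big(\varphi(b_{j_i})-\varphi(a_{j_{i+1}})\big)\le\sum_{i=1}^m\ell_i\sqrt{\varphi'(b_{j_i})\varphi'(a_{j_{i+1}})}=k_1n|K_1|,
\]
where the middle equality is a cyclic reindexing of the \(a\)-terms. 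This is exactly \(|M|\le\frac{k_1}{k_2}|K_1|\).

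The comparison is made across the atoms, where \(\varphi\) jumps from \(+\infty\) to \(-\infty\), and the values of \(\varphi\) telescope around the whole circle. That is why no arc-by-arc bookkeeping is needed.
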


\noindent\textit{Remark}: One can considerably simplify the proofs of this inequality in \cite{Ermers1990} and \cite{Horowitz1978} using the straightforward estimate \(|\varphi(\theta)-\varphi(\vartheta)|\leq |\theta-\vartheta|\sqrt{\varphi'(\theta)\varphi'(\vartheta)}\)
for \(\theta,\vartheta\in[0,2\pi)\setminus\{\theta_1,\dots,\theta_N\}\).

Finally, we require a standard result for oscillatory integrals (see e.g. \cite[Chap. 2 Prop.2 ]{SteinMurphy1993}).

\medskip

\begin{lem}[van der Corput]\label{lem:vdC}
Let \(\Phi\) be a smooth function on an interval \(I\). Assume that \(\Phi''\) has constant sign on \(I\) and that \(|\Phi'|\geq \lambda>0\) in \(I\). Then
\[
    \bigg|\int_I e^{i\Phi(\theta)}d\theta\bigg|\leq \frac{2}{\lambda}.
\]
\end{lem}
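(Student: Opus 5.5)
The plan is the standard integration-by-parts argument, with the bookkeeping kept tight enough to get the constant \(2\) rather than the usual \(3\). First I reduce to a compact interval \(I=[a,b]\). If \(I\) is open or half-open, for instance an arc \((\theta_j,\theta_{j+1})\) on which the phase blows up at the endpoints, I exhaust it by compact subintervals. The bound will be uniform in the subinterval and the integrand has modulus \(1\), so dominated convergence passes it to the limit.

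Next I note that \(\Phi'\) has constant sign on \(I\). It is continuous and never takes values in \((-\lambda,\lambda)\), so it cannot change sign. Replacing \(\Phi\) by \(-\Phi\), which conjugates the integral and does not change its modulus, I may assume \(\Phi'\geq\lambda>0\). Since \(\Phi''\) has constant sign, \(\Phi'\) is monotone, and hence so is \(1/\Phi'\), which takes values in \((0,1/\lambda]\).

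On \(I\) I write \(e^{i\Phi}=\frac{1}{i\Phi'}\frac{d}{d\theta}e^{i\Phi}\) and integrate by parts:
\[
    \int_a^b e^{i\Phi(\theta)}\,d\theta=\bigg[\frac{e^{i\Phi}}{i\Phi'}\bigg]_a^b-\int_a^b e^{i\Phi(\theta)}\,\frac{d}{d\theta}\Big(\frac{1}{i\Phi'(\theta)}\Big)d\theta.
\]
The boundary term has modulus at most \(\frac{1}{\Phi'(a)}+\frac{1}{\Phi'(b)}\). For the remaining integral, monotonicity of \(1/\Phi'\) means its derivative has constant sign. Therefore
\[
    \int_a^b\Big|\frac{d}{d\theta}\frac{1}{\Phi'}\Big|\,d\theta=\Big|\frac{1}{\Phi'(b)}-\frac{1}{\Phi'(a)}\Big|.
\]
Adding the two contributions gives \(u+v+|u-v|=2\max(u,v)\), where \(u=1/\Phi'(a)\) and \(v=1/\Phi'(b)\) are both positive. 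This is at most \(2/\lambda\), as claimed.

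The only step that needs care is this last combination: the sharp constant \(2\) depends on both endpoint values of \(1/\Phi'\) being positive and on the total variation of a monotone function equalling the difference of its endpoint values. Both facts come from the sign reduction in the second step.
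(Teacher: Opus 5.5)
Your argument is correct. The paper does not prove this lemma; it cites Stein's oscillatory-integral chapter, and the textbook proof there is the same integration by parts against $1/(i\Phi')$, with monotonicity of $1/\Phi'$ controlling the remainder. Your one refinement is to combine the boundary term and the total variation as $u+v+|u-v|=2\max(u,v)$, rather than bounding them separately by $2/\lambda$ and $1/\lambda$ (which gives $3/\lambda$). That is exactly what yields the constant $2$ the paper states and then uses, through $\sum_{k=1}^{2N}\frac{2}{(k_1-1)n}$, in the proof of Theorem~\ref{thm:1}. Your exhaustion by compact subintervals is also the right way to handle the open arcs next to the atoms, where $\varphi$ blows up.
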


Now we prove our first main result. In what follows, we regard all intervals as being in \(\mathbb R/2\pi\mathbb Z\).

\begin{proof}[Proof of Theorem \ref{thm:1}]
From Lemma \ref{properties on boundary}\textit{(ii)} we see that \(\varphi'\) is positive and convex on each interval \((\theta_j,\theta_{j+1})\), so if \(K_2\cap (\theta_j,\theta_{j+1})\neq\emptyset \) then it is an interval. Thus \(K_2\) is a union of at most \(N\) intervals, which we denote \(I_1,\dots, I_{m}\). Writing \(I_k=(a_k,b_k)\), we integrate by parts to compute
\[
    \int_{K_2}e^{i\varphi(\theta)}e^{-in\theta}d\theta=\frac{1}{in}\sum_{k=1}^{m}(e^{i\varphi(b_k)}e^{-inb_k}-e^{i\varphi(a_k)}e^{-ina_k})+\frac{1}{n}\sum_{k=1}^{m}\int_{a_k}^{b_k}\varphi'(\theta)e^{i\varphi(\theta)}e^{-in\theta}d\theta.
\]
Using that \(\varphi'(\theta)<k_2n\) on each \(I_k\) and that \(m\leq N\),
\[
    \bigg|\int_{K_2}e^{i\varphi(\theta)}e^{-in\theta}d\theta\bigg|\leq \frac{2m}{n}+\frac{1}{n}\int_{K_2}\varphi'(\theta)d\theta\leq \frac{2N}{n}+k_2|K_2|.
\]

Similarly, Lemma \ref{properties on boundary}\textit{(i)} shows that \(K_1\cap (\theta_j,\theta_{j+1})\) is comprised of two intervals: one where \(\varphi''\) is negative, and another where it is positive. Therefore \(K_1\) is a union of \(2N\) intervals \(J_1,\dots,J_{2N}\) on which \(\varphi''\) has constant sign. The second derivative of \(\Phi(\theta):=\varphi(\theta)-n\theta\) also has constant sign on each \(J_k\), and on these intervals we have \(\Phi'(\theta)>(k_1-1)n\). Thus by Lemma \ref{lem:vdC},
\[
    \bigg|\int_{K_1}e^{i\varphi(\theta)}e^{-in\theta}d\theta\bigg|\leq \sum_{k=1}^{2N}\bigg|\int_{J_k}e^{i\varphi(\theta)}e^{-in\theta}d\theta\bigg|\leq \sum_{k=1}^{2N}\frac{2}{(k_1-1)n}=\frac{4N}{(k_1-1)n}.
\]

Combining \eqref{eq:integral} with our bounds on the integral over \(K_1\) and \(K_2\), and using the fact that \(|e^{i\varphi(\theta)}e^{-in\theta}|=1\) for a.e. \(\theta\in [0,2\pi)\), we find that
\[
    \frac{2}{e}\leq 1-\frac{|K_1|+|K_2|}{2\pi}+\frac{N}{\pi n}+\frac{k_2|K_2|}{2\pi}+\frac{2N}{\pi(k_1-1)n}.
\]
Passing to subsets of \(K_1\) and \(K_2\) if necessary, we can assume that equality holds in Lemma \ref{lem:ermers}. Precisely, this can be done by uniformly contracting the intervals of \(K_2\) from their midpoints, and the intervals of \(K_1\) from their endpoints in \(\{\theta_1,\dots,\theta_N\}\), before proceeding with the previous estimates. Setting \(\tau=|K_1|\) gives \(|K_2|=2\pi-\frac{k_1+k_2}{k_2}\tau\). As \(|K_2|\geq 0\) we have \(\tau\in [0,\frac{2\pi k_2}{k_1+k_2}]\), and
\[
    \frac{2}{e}\leq \frac{1}{2\pi}\bigg(\frac{k_1}{k_2}-k_1-k_2\bigg)\tau+\frac{N}{\pi n}\bigg(\frac{k_1+1}{k_1-1}\bigg)+k_2.
\]
Whatever value \(\tau\) may take, the right-hand side is linear in \(\tau\), so the maximum with respect to this parameter occurs at either \(\tau=0\) or \(\tau=\frac{2\pi k_2}{k_1+k_2}\). Therefore,
\[
    \frac{2}{e}\leq \max\bigg\{k_2,\frac{ k_1}{k_1+k_2}\bigg\}+\frac{N}{\pi n}\bigg(\frac{k_1+1}{k_1-1}\bigg).
\]
For simplicity we choose \(k_2=\frac{1}{2}(-k_{1}+\sqrt{k_{1}^{2}+4k_{1}})>0\) so that \(k_2=\frac{k_{1}}{k_{1}+k_{2}}\in (0,1)\). Taking \(k_1=\frac{4}{3}\), the bound above simplifies to \(\frac{2}{e}\leq \frac{2}{3}+\frac{7N}{\pi n}\). The claimed estimate follows at once.
\end{proof}

Horowitz \cite{Horowitz1978} proved a refined version of the van der Corput lemma which can likely be used to produce a better estimate. No doubt other improvements are possible, but it is not clear whether this approach can be refined enough to get \(c=1\). Certainly more detailed estimates would be needed at several stages. 

\section{Variational methods and the proof of Theorem \ref{thm:X}}

Throughout this section we fix \(n\in\mathbb N\) and an extremal function \(f(z)=\sum_{j=0}^\infty a_jz^j\) for \(M_n\), normalized so that \(a_0,a_n>0\). Letting \(P\) and \(Q\) be as in Theorem \ref{thm:X}, we note that by \cite[Prop. 3(d)-(e)]{MartinSawyerTueroVukotic2015} it holds that \(\re P(z)> 0 \) in \(\mathbb D\) and \(\re P(\alpha_j)=0\) for \(1\leq j\leq N\). Thus, \(\theta\mapsto \re P(e^{-i\theta})\) is a non-negative trigonometric polynomial, and if we write \(\alpha_{j}=e^{-i\theta_j}\) then 
\[
    0=\frac{d}{d\theta}\bigg|_{\theta=\theta_j}\re P(e^{-i\theta})=\re\bigg\{\frac{d}{d\theta}\bigg|_{\theta=\theta_j} P(e^{-i\theta})\bigg\}=-\re \{ie^{-i\theta_j}P'(e^{-i\theta_j})\}=\im \alpha_jP'(\alpha_j).
\]

The equation \(\re P(\alpha_k)=0\) gives \(P(\alpha_k)=-\overline{P(\alpha_k)}\), and for any \(r\in\mathbb N\cup\{0\}\) it holds that \(\alpha_k^rP(\alpha_k)=-\overline{\alpha_k^{-r}P(\alpha_k)}\). Expanding \(P\), scaling this by \(\lambda_k\), and summing over \(k\), we find that
\[
    \sum_{k=1}^N\lambda_k\bigg(a_n\alpha_k^r+2\sum_{j=1}^na_{n-j}\alpha_k^{j+r}\bigg)=-\sum_{k=1}^N\lambda_k\overline{\bigg(a_n\alpha_k^{-r}+2\sum_{j=1}^na_{n-j}\alpha_k^{j-r}\bigg)}.
\]
Since \(a_n\in\mathbb R\), this simplifies to 
\[
    a_n\sum_{k=1}^N\lambda_k\alpha_k^r+\sum_{k=1}^N\sum_{j=1}^n \lambda_ka_{n-j}\alpha_k^{r+j}=-\sum_{k=1}^N\sum_{j=1}^n\lambda_k\overline{a_{n-j}}\alpha_k^{r-j}.
\]
From \eqref{extremal}, it follows that if we write \(f=e^g\) and \(g(z)=\sum_{j=0}^\infty b_jz^j\), then \(b_0=-\sum_{k=1}^N\lambda_k\) and \(\smash{b_j=-2\sum_{k=1}^N\lambda_k\alpha_k^j}\) for \(j\geq 1\). Writing \(b_j=\overline{b_{-j}}\) if \(j<0\), the preceding expression becomes
\[
    a_n\bigg(-2\sum_{k=1}^N\lambda_k\alpha_k^r\bigg)+\sum_{j=1}^na_{n-j}b_{r+j}=-\sum_{j=1}^n\overline{a_{n-j}}b_{r-j}-\overline{a_{n-r}}b_0\chi_{\{1,\dots,n\}}(r), 
\]
where \(\chi\) denotes an indicator function. Upon case-by-case inspection, this further simplifies to
\begin{equation}\label{eq:vanishing r}
    \sum_{j=0}^na_{n-j}b_{r+j}=-\sum_{j=1}^n\overline{a_{n-j}}b_{r-j}-\overline{a_{n-r}}b_0\chi_{\{0,\dots,n\}}(r),\qquad r\geq 0.
\end{equation}

Similarly, if \(\im \alpha_kP'(\alpha_k)=0\) then \(\alpha_kP'(\alpha_k)=\overline{\alpha_kP'(\alpha_k)}\) and \( \alpha_k^{r+1}P'(\alpha_k)=\overline{\alpha_{k}^{1-r}P'(\alpha_k)}\) for any \(r\geq 0 \). Expanding the polynomials in this identity gives
\[
    \sum_{j=1}^nj\alpha_k^{r+j}a_{n-j}=\overline{\sum_{j=1}^nj\alpha_k^{j-r}a_{n-j}}=\sum_{j=1}^nj\alpha_k^{r-j}\overline{a_{n-j}}.
\]
Scaling this by \(\lambda_k\), summing over \(k\), and swapping the order of summation as above, we find that
\[
    \sum_{j=1}^nja_{n-j}\bigg(-2\sum_{k=1}^N \lambda_k\alpha_k^{r+j}\bigg)=\sum_{j=1}^nj\overline{a_{n-j}}\bigg(-2\sum_{k=1}^N\lambda_k\alpha_k^{r-j}\bigg).
\]
Once again, we recognize the bracketed terms above as coefficients of \(g\), allowing us to simplify 
\begin{equation}\label{eq:derivative vanishing r}
    \sum_{j=1}^nja_{n-j}b_{r+j}=\sum_{j=1}^nj\overline{a_{n-j}}b_{r-j}+r\overline{a_{n-r}}b_0\chi_{\{1,\dots,n\}}(r),\qquad r\geq 0.    
\end{equation}
Equipped with these coefficient identities, we can prove the claimed formulas of Theorem \ref{thm:X}.

\begin{proof}[Proof of Theorem \ref{thm:X}]
For \(r\geq 0\) and \(z\in \mathbb D\), we multiply \eqref{eq:vanishing r} by \(z^{n+r}\) and sum over \(r=0,\dots, m\) for \(m\geq n\) to get
\[
    \sum_{r=0}^m\sum_{j=0}^na_{n-j}b_{r+j}z^{n+r}=-\sum_{r=0}^m\sum_{j=1}^n\overline{a_{n-j}}b_{r-j}z^{n+r}-b_0\sum_{r=0}^n\overline{a_{n-r}}z^{n+r}.
\]
Interchanging the order of summation and re-indexing the inner sums to resemble the Taylor polynomials of \(g\), we find that
\[
    \sum_{j=0}^na_{n-j}z^{n-j}\bigg(\sum_{r=j}^{m+j}b_{r}z^{r}\bigg)=-\sum_{j=1}^n\overline{a_{n-j}}z^{n+j}\bigg(\sum_{r=-j}^{m-j}b_{r}z^{r}\bigg)-b_0z^n\sum_{r=0}^n\overline{a_{n-r}}z^{r}.
\]
Sending \(m\to\infty\), the Taylor polynomials of \(g\) converge uniformly on compact subsets of \(\mathbb D\), so
\[
    \sum_{j=1}^na_{n-j}z^{n-j}\bigg(g(z)-\sum_{r=0}^{j-1}b_{r}z^{r}\bigg)=-\sum_{j=1}^n\overline{a_{n-j}}z^{n+j}\bigg(\sum_{r=1}^{j}\overline{b_{r}}z^{-r}+g(z)\bigg)-b_0\sum_{r=0}^n\overline{a_{n-r}}z^{r+n}.
\]
Rearranging to isolate \(g(z)\), we find that
\[
    g(z)\bigg(\sum_{j=0}^na_{n-j}z^{n-j}+\sum_{j=1}^n\overline{a_{n-j}}z^{n+j}\bigg)=\sum_{j=1}^na_{n-j}\bigg(\sum_{r=0}^{j-1}b_{r}z^{n-j+r}\bigg)-\sum_{j=0}^n\overline{a_{n-j}}\bigg(\sum_{r=0}^{j}\overline{b_{r}}z^{n+j-r}\bigg). 
\]

It remains to express the polynomials above in terms of known quantities. The first term on the right-hand side above can be treated by re-indexing (to group by powers of \(z\)) to get
\[
    \sum_{j=1}^na_{n-j}\bigg(\sum_{r=0}^{j-1}b_{r}z^{n-j+r}\bigg)=\sum_{r=0}^{n-1}\sum_{j=0}^{r}a_jb_{r-j}z^{r}=\sum_{r=0}^{n-1}T_r(fg)z^{r}=\frac{z^n}{2}\bigg(Q\bigg(\frac{1}{z}\bigg)-T_n(fg)\bigg).
\]
For the second term on the right, we proceed identically to compute
\[
    \sum_{j=0}^n\overline{a_{n-j}}\bigg(\sum_{r=0}^{j}\overline{b_{r}}z^{n+j-r}\bigg)=z^{2n}\sum_{j=1}^n\overline{a_{n-j}}\bigg(\sum_{r=0}^{j-1}\overline{b_{r}}(z^{-1})^{n-j+r}\bigg)+\overline{T_n(fg)}z^{n}=\frac{z^n}{2}\bigg(\overline{Q(\overline{z})}+\overline{T_n(fg)}\bigg).
\]
As \(\re T_n(fg)=\sum_{k=0}^N\lambda_k \re P(\alpha_k)=0\), the right-hand side simplifies to give 
\[
    g(z)\bigg(\sum_{j=0}^na_{n-j}z^{n-j}+\sum_{j=1}^n\overline{a_{n-j}}z^{n+j}\bigg)=\frac{z^n}{2}\bigg(Q\bigg(\frac{1}{z}\bigg)-\overline{Q(\overline{z})}\bigg). 
\]

Similarly, to determine the polynomial on the left, we use that \(a_n\in\mathbb R\) to compute
\[
    \sum_{j=0}^na_{n-j}z^{n-j}=\frac{z^n}{2}\bigg(P\bigg(\frac{1}{z}\bigg)+a_n\bigg),\qquad \sum_{j=1}^n\overline{a_{n-j}}z^{n+j}=\frac{z^n}{2}\bigg(\overline{P(\overline{z})}-a_n\bigg).
\]
Adding these expressions gives
\[
    \sum_{j=0}^na_{n-j}z^{n-j}+\sum_{j=1}^n\overline{a_{n-j}}z^{n+j}=\frac{z^n}{2}\bigg(P\bigg(\frac{1}{z}\bigg)+\overline{P(\overline{z})}\bigg).
\]
The desired formula for \(g(z)\) follows at once.

The procedure above can be repeated using equation \eqref{eq:derivative vanishing r} to recover the formula for \(zg'(z)\). Adding \(r\) times \eqref{eq:vanishing r} to \eqref{eq:derivative vanishing r} first gives us a more convenient formula,
\[
    \sum_{j=0}^n(r+j)a_{n-j}b_{r+j}=\sum_{j=1}^n(j-r)\overline{a_{n-j}}b_{r-j},\qquad r\geq 0.
\]
Scaling this by \(z^{n+r}\) and summing over \(r\), we find that for \(m\in\mathbb N\),
\[
    \sum_{r=0}^m\sum_{j=0}^n(r+j)a_{n-j}b_{r+j}z^{n+r}=\sum_{r=0}^m\sum_{j=1}^n(j-r)\overline{a_{n-j}}b_{r-j}z^{n+r}.
\]
Interchanging the order of summation and re-indexing shows that if \(m>n\) then,
\begin{gather*}
    \sum_{j=1}^na_{n-j}z^{n-j+1}\bigg(\sum_{r=j}^{m+j}rb_{r}z^{r-1}\bigg)=-\sum_{j=1}^n\overline{a_{n-j}}z^{n+j}\bigg(\sum_{r=0}^{j-1}(r-j)b_{r-j}z^{r-j}+z\sum_{r=1}^{m-j}rb_rz^{r-1}\bigg).
\end{gather*}
Finally, sending \(m\to\infty\), we obtain a formula involving \(g'\), 
\[
    \sum_{j=1}^na_{n-j}z^{n-j+1}\bigg(g'(z)-\sum_{r=1}^{j-1}rb_{r}z^{r-1}\bigg)=-\sum_{j=1}^n\overline{a_{n-j}}z^{n+j}\bigg(\sum_{r=0}^{j-1}(r-j)b_{r-j}z^{r-j}+zg'(z)\bigg).
\]
Isolating \(g'(z)\), we get
\[
    zg'(z)=\frac{\displaystyle\sum_{j=1}^n\bigg(a_{n-j}z^{n-j}\bigg(\sum_{r=1}^{j-1}rb_{r}z^{r}\bigg)+\overline{a_{n-j}}z^{n+j}\bigg(\sum_{r=1}^{j}r\overline{b_{r}}z^{-r}\bigg)\bigg)}{\displaystyle \sum_{j=0}^na_{n-j}z^{n-j}+\sum_{j=1}^n\overline{a_{n-j}}z^{n+j}}.
\]

The denominator is \(\smash{\frac{1}{2}z^n(P(1/z)+\overline{P(\overline{z})})}\) by our earlier work. The numerator can also be simplified considerably using the fact that \(ja_j=\sum_{k=1}^{j} kb_ka_{j-k}\) for \(j\in\mathbb N\), since \(f'=fg'\). For instance, the first term simplifies to
\[
    \sum_{j=1}^na_{n-j}z^{n-j}\bigg(\sum_{r=1}^{j-1}rb_{r}z^{r}\bigg)=\sum_{j=1}^n\bigg(\sum_{r=0}^jrb_ra_{j-r}\bigg)z^j-na_nz^n=\sum_{j=1}^{n-1}ja_j z^j.
\]
Similarly, 
\[
    \sum_{j=1}^n\overline{a_{n-j}}z^{n+j}\bigg(\sum_{r=1}^{j}r\overline{b_{r}}z^{-r}\bigg) =\sum_{j=1}^n\overline{\bigg(\sum_{r=0}^{j}ra_{j-r}b_{r}\bigg)}z^{2n-j}=\sum_{j=1}^nj\overline{a_j}z^{2n-j}.
\]
With these calculations, the numerator in our formula above for \(g'(z)\) simplifies to 
\[
    \sum_{j=1}^na_{n-j}z^{n-j}\bigg(\sum_{r=1}^{j-1}rb_{r}z^{r}\bigg)+\sum_{j=1}^n\overline{a_{n-j}}z^{n+j}\bigg(\sum_{r=1}^{j}r\overline{b_{r}}z^{-r}\bigg)
    =\sum_{j=1}^{n}ja_j z^j+\sum_{j=1}^nj\overline{a_j}z^{2n-j}-na_nz^n.
\]
Our goal is to express this in terms of \(P\). To this end we recognize that
\[
    \sum_{j=1}^{n}ja_j z^j=\frac{z^n}{2}\bigg(2\sum_{j=0}^{n}(n-j)a_{n-j} z^{-j}\bigg)=\frac{z^n}{2}\bigg(nP\bigg(\frac{1}{z}\bigg)-\frac{1}{z}P'\bigg(\frac{1}{z}\bigg)+na_n\bigg),
\]
and
\[
    \sum_{j=1}^nj\overline{a_j}z^{2n-j}=\frac{z^n}{2}\bigg(2\sum_{j=0}^n(n-j)\overline{a_{n-j}}z^{j}\bigg)=\frac{z^n}{2}\big(n\overline{P(\overline{z})}-z\overline{P'(\overline{z})}+na_n\big).
\]
Using these simplifications, we find that
\[
    zg'(z)
    =\frac{nz^nP(1/z)+nz^n\overline{P(\overline{z})}-(z^{n-1}P'(1/z)+z^{n+1}\overline{P'(\overline{z})})}{z^nP(1/z)+z^n\overline{P(\overline{z})}}=n-\frac{z^{n-1}P'(1/z)+z^{n+1}\overline{P'(\overline{z})}}{z^nP(1/z)+z^n\overline{P(\overline{z})}}.
\]
This completes the proof of Theorem \ref{thm:X}.
\end{proof}

The preceding expressions simplify even further if we restrict attention to \(\partial \mathbb D\). If \(|z|=1\) then \(z^{-1}=\overline{z}\), and our expression of the first formula reduces to \(g(z)=i\im Q(\overline{z})/\re P(\overline{z})\). Recall that we may also write \(g(e^{i\theta})=i\varphi(\theta)\), giving the first formula of Corollary \ref{cor:formulas for varphi}. Similarly, we observe that \(\varphi'(\theta)=e^{i\theta}g'(e^{i\theta})\), and the second formula of Corollary \ref{cor:formulas for varphi} follows in identical fashion. 

In passing, we also note that if \(g_j(z)=\frac{1+\alpha_jz}{1-\alpha_jz}\) then \(M_n(fg_jg_k)=0\) for \(1\leq j,k\leq N\). This is easily verified by perturbing two atoms simultaneously in the manner of \cite[\S1.4]{MartinSawyerTueroVukotic2015}. So,
\[
    0= \sum_{k=1}^N\lambda_k M_n(ifg_jg_k)=M_n(ifgg_j)=\re iT_n(fgg_j)=-\im Q(\alpha_j).
\]
Thus, \(\im Q(\alpha_j)=0\) for \(1\leq j\leq N\). 

It remains to justify Corollary \ref{cor: lower bound on N}.

\begin{proof}[Proof of Corollary \ref{cor: lower bound on N}]
Define \(T(\theta):=\re \{e^{-i\theta}P'e^{-i\theta}\}\). From the second formula in Corollary \ref{cor:formulas for varphi}, we observe that \(\varphi'(\theta)=n\) if and only if \(T(\theta)=0\). As established by Lemma \ref{properties on boundary}, the function \(\varphi'\) is strictly convex on each interval \((\theta_j,\theta_{j+1})\), so \(\varphi'(\theta)=n\) has at most two solutions in each of these intervals, hence at most \(2N\) solutions in all of \(\mathbb T\). If \(Z=\{\theta\in [0,2\pi)\;|\; T(\theta)=0\}\), then it follows that \(\#Z\leq 2N\). 

Next, let \(\Gamma(\theta)=e^{i\theta}P'(e^{i\theta})\) and assume that \(zP'(z)\) has \(m\) zeros in \(D\), counting multiplicities. By the argument principle, the winding number of \(\Gamma\) around the origin is exactly \(m\). Each time \(\Gamma\) winds around the origin, it intersects the imaginary axis twice, meaning that there are (at least) \(2m\) distinct points \(\theta_1,\dots,\theta_{2m}\in [0,2\pi)\) where \(\re \Gamma(\theta_j)=0\). As \(\re \Gamma=T\), we see that \(T\) has at least \(2m\) distinct zeros, so \(\#Z\geq 2m\). Combining with our bound above, we see that \(N\geq m\). 

Finally, note that \(P'(z)\) is a degree \(n-1\) polynomial. If it only vanishes in \(D\), then \(zP'(z)\) has \(n\) zeros in \(D\), counting multiplicities. With the aforementioned bound, this forces \(N=n\). 
\end{proof}

\section{Proper holomorphic maps and the proof of Theorem \ref{thm:4}}

Now we explore the holomorphic invariants of extremal functions. Our main tools are a result about the invariants of Blaschke products on the circle from \cite{CassierChalendar2000}, and a characterization of the proper holomorphic self-maps of \(\mathbb D\).

\medskip

\begin{thm}[Cassier–Chalendar \cite{CassierChalendar2000}]\label{thm:C-C}
Let \(B\) be a finite Blaschke product of degree \(N\). The set of continuous functions \(\Phi:\partial\mathbb D\to \partial\mathbb D\) such that \(B|_{\partial\mathbb D}\circ\Phi\equiv B|_{\partial\mathbb D}\) is a cyclic group under composition of order \(N\). 
\end{thm}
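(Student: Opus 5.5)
The plan is to reduce the problem to covering theory of the circle. Restricted to \(\partial\mathbb D\), a finite Blaschke product \(B\) of degree \(N\) is a smooth \(N\)-fold covering map \(\partial\mathbb D\to\partial\mathbb D\). Indeed, writing \(B(e^{i\theta})=e^{i\beta(\theta)}\), the derivative \(\beta'(\theta)=\sum_k \frac{1-|a_k|^2}{|e^{i\theta}-a_k|^2}\) is positive, and \(\beta\) increases by \(2\pi N\) over one turn. A continuous \(\Phi\) with \(B\circ\Phi=B\) is therefore a lift of the covering map \(B\) through itself. The covering is connected and its deck group is the group of automorphisms of a connected \(N\)-sheeted cover of \(\partial\mathbb D\) corresponding to the subgroup \(N\mathbb Z\subset\mathbb Z=\pi_1(\partial\mathbb D)\). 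Since this subgroup is normal, the deck group is \(\mathbb Z/N\mathbb Z\). The main task is to check that every continuous solution is a deck transformation, and that these form a cyclic group of order \(N\).

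The concrete steps are as follows. First, lift to \(\mathbb R\). Let \(\beta:\mathbb R\to\mathbb R\) be a continuous lift of \(B\), so \(B(e^{i\theta})=e^{i\beta(\theta)}\) with \(\beta(\theta+2\pi)=\beta(\theta)+2\pi N\); \(\beta\) is a strictly increasing homeomorphism of \(\mathbb R\). Lift \(\Phi\) similarly to a continuous \(\phi:\mathbb R\to\mathbb R\) with \(\phi(\theta+2\pi)=\phi(\theta)+2\pi d\), where \(d\) is the degree of \(\Phi\). The relation \(B\circ\Phi=B\) gives \(\beta(\phi(\theta))-\beta(\theta)\in 2\pi\mathbb Z\). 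By continuity and connectedness of \(\mathbb R\), this difference is a constant \(2\pi k\). Hence
\[
\phi(\theta)=\beta^{-1}\big(\beta(\theta)+2\pi k\big).
\]
Thus \(\Phi\) is uniquely determined by \(k\in\mathbb Z\), and conversely each \(k\) defines a continuous solution \(\Phi_k\). To see that \(\phi_k\) descends to the circle, note that \(\beta^{-1}(x+2\pi N)=\beta^{-1}(x)+2\pi\), so \(\phi_k(\theta+2\pi)=\phi_k(\theta)+2\pi\). For the same reason \(\phi_{k+N}=\phi_k+2\pi\), so \(\Phi_k\) depends only on \(k \bmod N\).

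Next I check that the \(N\) maps \(\Phi_0,\dots,\Phi_{N-1}\) are distinct. If \(\phi_k\equiv\phi_{k'}+2\pi m\), then applying \(\beta\) gives \(2\pi k=2\pi k'+2\pi Nm\), so \(k\equiv k'\pmod N\). Composition acts by adding indices: \(\phi_k\circ\phi_{k'}=\beta^{-1}(\beta(\theta)+2\pi k'+2\pi k)=\phi_{k+k'}\). Hence \(k\mapsto\Phi_k\) is an isomorphism \(\mathbb Z/N\mathbb Z\to\{\Phi\}\), which is cyclic of order \(N\), generated by \(\Phi_1\).

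The only delicate step is the constancy argument together with the choice of lifts. The point to handle is that the lift of \(\Phi\) is defined only up to \(2\pi\mathbb Z\), and that the formula for \(\phi\) must be shown consistent with the periodicity of \(\beta\); both are settled by the identity \(\beta^{-1}(x+2\pi N)=\beta^{-1}(x)+2\pi\). Strict monotonicity of \(\beta\) is what makes \(\beta^{-1}\) exist, and it follows from the explicit Poisson-kernel formula for \(\beta'\) above. One should also note that when \(N=0\), i.e.\ \(B\) is a unimodular constant, the statement fails as written, so I assume \(N\geq1\).
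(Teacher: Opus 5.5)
The paper does not prove this statement. It quotes it from Cassier--Chalendar, and the only consequence it uses later (in the proof of Theorem~\ref{thm:4}) is that a continuous invariant \(\Phi\) of a degree-\(N\) Blaschke product satisfies \(\Phi^{\circ N}=\mathrm{id}\). Your lifting argument is therefore a genuinely independent route, and it is a correct, self-contained proof.

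The key steps all check out:
\begin{itemize}
\item By the Poisson-kernel formula, \(\beta'>0\). Hence \(\beta\) is an increasing homeomorphism of \(\mathbb R\) with \(\beta(\theta+2\pi)=\beta(\theta)+2\pi N\). This is exactly where \(N\geq 1\) is needed, and your remark that the statement fails for a unimodular constant is right.
\item The continuous function \(\beta\circ\phi-\beta\) takes values in \(2\pi\mathbb Z\), so it is constant, and this forces \(\phi=\beta^{-1}(\beta+2\pi k)\). Thus every continuous solution is one of the \(\Phi_k\).
\item The identities \(\beta^{-1}(x+2\pi N)=\beta^{-1}(x)+2\pi\), \(\phi_{k+N}=\phi_k+2\pi\) and \(\phi_k\circ\phi_{k'}=\phi_{k+k'}\) give the isomorphism with \(\mathbb Z/N\mathbb Z\).
\end{itemize}

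One small point of presentation: in the distinctness step, say explicitly why \(\Phi_k=\Phi_{k'}\) gives \(\phi_k=\phi_{k'}+2\pi m\). The continuous function \(\phi_k-\phi_{k'}\) takes values in \(2\pi\mathbb Z\), so it is constant by the same connectedness argument you used before. The covering-space discussion in your first paragraph is only motivation; the explicit computation is the proof and does not depend on it.

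Compared with citing the result, your approach gives an explicit description of every invariant. Each one is an orientation-preserving homeomorphism obtained by shifting the argument of \(B\) by \(2\pi k\). It also makes the fact used in Theorem~\ref{thm:4} immediate: \(\Phi_k^{\circ N}=\Phi_{Nk}=\Phi_0=\mathrm{id}\). So the paper could be made self-contained at that point with a few lines.
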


It follows that if \(\Phi\) is a continuous invariant of \(B\) on the circle, then \(\Phi^{\circ N}=\textrm{id}\). Here \(\Phi^{\circ N}\) denotes the \(N\)-fold composition of \(\Phi\) with itself, and we use this notation throughout the section. 

\medskip

Next, a holomorphic map \(\psi: \mathbb D\to\mathbb D\) is called proper if \(\psi^{-1}(K)\) is compact in \(\mathbb D\) for any compact \(K\subset\mathbb D\). It is well-known that a self-map of \(\mathbb D\) is proper if and only if it is a finite Blaschke product \cite[\S1]{Bedford1984}. For completeness, we prove this equivalence.

\medskip

\begin{lem}\label{proper blaschke}
A holomorphic self-map of \(\mathbb D\) is proper if and only if it is a finite Blaschke product. 
\end{lem}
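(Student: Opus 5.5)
The plan is to prove the two directions separately: a short continuity argument for finite Blaschke products, and a boundary-behaviour plus maximum-modulus argument for proper maps.

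\emph{Finite Blaschke products are proper.} Let \(B\) be a finite Blaschke product. It is holomorphic on a neighbourhood of \(\overline{\mathbb D}\), continuous on \(\overline{\mathbb D}\), and satisfies \(|B|=1\) on \(\partial\mathbb D\). Given a compact \(K\subset\mathbb D\), choose \(r<1\) with \(K\subset\{|w|\le r\}\). By uniform continuity of \(|B|\) on \(\overline{\mathbb D}\), there is \(\rho<1\) such that \(|B(z)|>r\) whenever \(\rho<|z|<1\). Hence \(B^{-1}(K)\) is a closed subset of \(\{|z|\le\rho\}\), so it is compact.

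\emph{Proper maps are finite Blaschke products.} Let \(\psi:\mathbb D\to\mathbb D\) be proper.

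1. First I show that \(|\psi(z)|\to1\) as \(|z|\to1\). If not, there are \(z_k\) with \(|z_k|\to1\) and \(|\psi(z_k)|\le r<1\). Then all \(z_k\) lie in \(\psi^{-1}(\{|w|\le r\})\), which is compact in \(\mathbb D\). This contradicts \(|z_k|\to1\).

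2. Next I show that \(\psi\) has finitely many zeros. By step 1, \(\psi\) is nonvanishing on some annulus \(\rho<|z|<1\). Since \(\psi\not\equiv0\), its zeros in the compact disc \(|z|\le\rho\) are finite in number. Call them \(a_1,\dots,a_m\), listed with multiplicity.

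3. Let \(B\) be the finite Blaschke product with zeros \(a_1,\dots,a_m\), and set \(h=\psi/B\). Then \(h\) is holomorphic and zero-free in \(\mathbb D\). Since \(|B|\to1\) and \(|\psi|\to1\) at the boundary, \(|h(z)|\to1\) as \(|z|\to1\).

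4. Apply the maximum principle to \(h\) and to \(1/h\), both holomorphic in \(\mathbb D\). This gives \(|h|\le1\) and \(|1/h|\le1\), so \(|h|\equiv1\) and \(h\) is a unimodular constant. Therefore \(\psi=cB\) is a finite Blaschke product.

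The only step that needs genuine care is step 1, which turns properness into boundary behaviour; the rest is standard.
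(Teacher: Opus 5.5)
Your proof is correct and follows essentially the same route as the paper. Properness gives \(|\psi(z)|\to 1\) uniformly as \(|z|\to1\), hence finitely many zeros, and dividing by the finite Blaschke product with those zeros leaves a zero-free function whose modulus tends to \(1\), which is a unimodular constant by the maximum principle applied to it and to its reciprocal. Two of your details are slightly more careful than the paper's: you list the zeros with multiplicity, which is needed for \(\psi/B\) to be zero-free, and you use continuity on \(\overline{\mathbb D}\) to place \(B^{-1}(K)\) inside a smaller closed disc \(\{|z|\le\rho\}\).
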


\begin{proof}
First let \(\psi\) be a finite Blaschke product. Given a compact set \(K\subset\mathbb D\), we can choose \(r<1\) such that if \(w\in K\) then \(|w|\leq r\). Then
\[
    \psi^{-1}(K)\subseteq \psi^{-1}(\{w\in \mathbb D\;|\; |w|\leq r\})=\{z\in{\mathbb D}\;|\; |\psi(z)|\leq r\}.
\]
As \(\psi\) is a finite Blaschke product it holds that \(|\psi(z)|<1\) if and only if \(|z|<1\), so the right-hand side above is contained in \(\mathbb D\). Therefore \(\psi^{-1}(K)\subset \mathbb D\). Furthermore, as \(\psi\) is continuous in \(\mathbb D\) and \(K\) is compact, \(\psi^{-1}(K)\) is also closed and therefore compact. It follows that \(\psi\) is proper.

On the other hand, if \(\psi\) is proper then for any \(r<1\) the set \(\psi^{-1}(\{|w|\leq r\})\) is compact in \(\mathbb D\), so there exists \(\rho<1\) such that if \(|\psi(z)|\leq r\) then \(|z|\leq \rho\). Thus if \(|z|>\rho\) then \(|\psi(z)|\geq r\), and we see that \(|\psi(z)|\to 1\) uniformly as \(|z|\to 1\).  Also note that \(\psi^{-1}(\{0\})\) is compact and discrete. Otherwise, \(\psi\equiv 0\) by the identity theorem and \(\mathbb D=\psi^{-1}(\{0\})\) would be compact, a contradiction. Thus \(\psi^{-1}(\{0\})\) is finite, say \(\psi^{-1}(\{0\})=\{z_1,\dots, z_N\}\). Now define 
\[
    B(z)=\prod_{j=1}^N\frac{z-z_j}{1-\overline{z_j}z}
\]
and let \(\eta=\psi/B\). Then \(\eta\) is non-vanishing in \(\mathbb D\) and \(|\eta(z)|\to 1\) uniformly as \(|z|\to 1\). The same is true of \(1/\eta\), so \(\eta\) is a unimodular constant by the maximum modulus principle. Therefore \(\psi=\eta B\), showing that \(\psi\) is a finite Blaschke product. 
\end{proof}

It will also be useful to note that a composition \(B_1\circ B_2\) of finite Blaschke products \(B_1\) and \(B_2\) of respective degrees \(N_1\) and \(N_2\) is a Blaschke product of degree \(N_1N_2\) (see e.g. \cite[Thm. 3.12]{GarciaMashreghiRoss2018}). Using this and the aforementioned facts, we characterize the invariants of extremal functions. 

\begin{proof}[Proof of Theorem \ref{thm:4}]
Let \(f\) be extremal for \(M_n\) and suppose that \(N\geq 3\)  (here \(N\) is as in \eqref{extremal}). If \(f\circ \psi=f\) then \(\log f\circ\psi=\log f+2\ell\pi i\) for some \(\ell\in\mathbb Z\). There exists \(t>0\) and a Blaschke product \(h\) of degree \(N\) such that \(\log f=t\frac{h-1}{h+1}\) (this is justified in Lemma \ref{lem:h is Blaschke} below). Therefore in all of \(\mathbb D\) we have
\begin{equation}\label{kill c}
    \frac{h\circ\psi-1}{h\circ\psi+1}=\frac{h-1}{h+1}+2ic,
\end{equation}
where \(c=\ell/2t\in\mathbb R\). Solving the equation above to write \(h\circ\psi\) in terms of \(h\) and \(c\), we find that
\[
    h\circ\psi=\zeta\frac{h-b}{1-\overline{b}h},\quad\textrm{where}\quad b=-\frac{ic}{1+ic},\quad \zeta=\frac{1+ic}{1-ic}.
\]
In particular we note that \(b\in \mathbb D\) and \(\zeta\in\partial\mathbb D\), so
\[
    \omega(z):=\zeta\frac{z-b}{1-\overline{b}z}\in\textrm{Aut}(\mathbb D),
\]
and it is also easily verified that \(\omega(-1)=-1\). Writing \(h\circ\psi=\omega\circ h\), we note that \(\omega\circ h\) is a degree \(N\) Blaschke product. Now given a compact set \(K\subset \mathbb D\), we observe that \(\psi^{-1}(K)\) is closed by continuity in \(\mathbb D\). Moreover, 
\[
    \psi^{-1}(K)\subseteq \psi^{-1}(h^{-1}(h(K)))=(h\circ \psi)^{-1}(h(K))=(\omega\circ h)^{-1}(h(K)).
\]
The set \(h(K)\) is compact since \(h\) is continuous, and by Lemma \ref{proper blaschke} \(\omega\circ h\) is proper, so it follows that \((\omega\circ h)^{-1}(h(K))\) is compact in \(\mathbb D\). Thus \(\psi\) is proper, so it is a finite Blaschke product by Lemma \ref{proper blaschke}. If \(\psi\) has degree \(N_0\) then we deduce that \(NN_0=N\), so \(N_0=1\) and \(\psi\in\mathrm{Aut}(\mathbb D)\).

It remains to show that \(\psi\) is periodic. From \eqref{extremal} it is evident that \(h(z)=-1\) if and only if \(z\in A:=\{\overline{\alpha_1},\dots,\overline{\alpha_N}\}\). Moreover, if \(j\in\{1,\dots, N\}\) then \(h(\psi(\overline{\alpha_j}))=(\psi\circ h)(\overline{\alpha_j})=\omega(-1)=-1\), showing that \(\psi(\overline{\alpha_j})\in A\). As \(\psi\) is also injective, \(\psi|_A\) is a permutation. Thus there exists \(m\in\mathbb N\) such that \(\psi^{\circ m}|_{A}=\textrm{id}|_A\) (decompose \(\psi|_A\) into cycles, and take \(m\) as the least common multiple of the cycle lengths). As \(\#A=N\geq 3\) it follows that \(\psi^{\circ m}(z)=z\) in all of \(\mathbb D\), since a non-identity automorphism can fix at most two points on \(\overline{\mathbb D}\). 

Using the fact that \(\psi^{m}=\mathrm{id}\), we will show that \(\psi\) has a fixed point in \(\mathbb D\). To this end recall that the matrix group \(SU(1,1)\) is isomorphic to \((\mathrm{Aut}(\mathbb D),\circ)\) by the map
\[
    SU(1,1)\ni M=\begin{pmatrix}
        \alpha & \beta\\
        \overline{\beta} & \overline{\alpha}
    \end{pmatrix}\quad\xrightarrow{\sim}\quad\mu_M(z):=\frac{\alpha z+\beta}{\overline{\beta}z+\overline{\alpha}}\in \mathrm{Aut}(\mathbb D).
\]
In particular, any element of \(\mathrm{Aut}(\mathbb D)\) is of the form \(\mu_M\) for some \(\alpha,\beta\in\mathbb C\) and \(M\) as above. Thus there exists \(M\in SU(1,1)\) such that \(\psi=\mu_M\), and since \(\psi^{\circ m}=\textrm{id}\) we have
\[
    \mu_I=\mathrm{id}=\psi^{\circ m}=\mu_{M}^{\circ m}=\mu_{M^m}.
\]
Thus \(M^m=I\), so the minimal polynomial of \(M\) divides \(z^m-1\) and it has no repeat roots. This implies that \(M\) is diagonalizable. If \(\lambda_1,\,\lambda_2\) are the eigenvalues of \(M\) then \(\lambda_1^m=\lambda_2^m=1\) and \(\lambda_1\lambda_2=\det M=1\), giving \(\lambda_2=\overline{\lambda_1}\). 

If \(\lambda_1\neq \lambda_2\) and \(v_1=(z_1,z_2)^T\in\mathbb C^2\) is an eigenvector for \(\lambda_1\), then it is easily verified that \(v_2=(\overline{z_2},\overline{z_1})^T\) is an eigenvector for \(\lambda_2\). The vectors \(v_1\) and \(v_2\) are also linearly independent since \(M\) is diagonalizable, so \(|z_1|^2-|z_2|^2\neq 0\). Assuming that \(|z_1|>|z_2|\) (otherwise, swap eigenvalues and conjugate) we can rescale so that \(|z_1|^2-|z_2|^2=1\). Assuming this done, take
\[
    P=\begin{pmatrix}
        z_1 & \overline{z_2}\\
        z_2 & \overline{z_1}
    \end{pmatrix}\in SU(1,1),\qquad \mu_P(z)=\frac{z_1z+\overline{z_2}}{z_2z+\overline{z_1}}\in\mathrm{Aut}(\mathbb D)
\]
and \(D=\mathrm{diag}(\lambda_1,\lambda_2)\), so that \(M=PDP^{-1}\). Then \(\psi=\mu_M=\mu_P\circ \mu_D\circ \mu_P^{-1}\), and since \(\mu_D\) is a rotation (hence it fixes zero) we find that \(\psi\) fixes \(a:=\overline{z_2}/\overline{z_1}\in \mathbb D\). In case \(\lambda_1=\lambda_2\) note that \(M\) is conjugate to \(\pm I\), so \(M=\pm I\) and \(\psi(z)=\mu_M(z)=\mu_{\pm I}(z)=z\) and \(\psi\) fixes \(a=0\).

Now it follows by evaluating \eqref{kill c} at \(z=a\) that \(c=0\) and \(\psi\) is an invariant of \(h\). Thus, \(\Phi=\psi|_{\partial\mathbb D}\) is a continuous invariant of \(h|_{\partial \mathbb D}\), and since \(h\) has degree \(N\) it follows from Theorem \ref{thm:C-C} that \(\Phi^{\circ N}=\textrm{id}\). By the maximum modulus principle then, \(\psi^{\circ N}=\textrm{id}\) in all of \(\mathbb D\). If we set
\[
    \psi_a(z)=\frac{z-a}{1-\overline{a}z}\in\mathrm{Aut}(\mathbb D),
\]
then the composition \(\psi_a\circ \psi\circ\psi_{-a}:\mathbb D\to\mathbb D\) is an automorphism that fixes zero, hence it is a rotation, and for some \(\xi\in\partial{\mathbb D}\) we can write \(\psi(z)=\psi_{-a}(\xi \psi_a(z)). \) Then \(\psi^{\circ N}=\textrm{id}\) if and only if \(z=\psi_{-a}(\xi^N \psi_a(z))\) and \(\psi_a(z)=\xi^N\psi_a(z)\) in \(\mathbb D\), meaning that \(\xi^N=1\).

Finally, in case \(N=2\) we argue that either \(\psi^{\circ 2}=\mathrm{id}\), in which case the argument above carries through, or \(\psi\) is a hyperbolic automorphism. If \(\#A=2\) and \(\psi\) does not fix \(A\) then no point of \(A\) is fixed under \(\psi\). By the Brouwer fixed point theorem, there exists a point \(a\in\overline{\mathbb D}\setminus A\) such that \(\psi(a)=a\). Then \(\psi^{\circ 2}\) fixes \(a\) as well as both points in \(A\), so \(\psi^2=\mathrm{id}\) and the argument above reduces \(\psi\) to the desired form. Otherwise, \(\psi\) fixes \(A=\{\overline{\alpha_1},\overline{\alpha_2}\}\) as claimed. 
\end{proof}

Equipped with Theorem \ref{thm:4}, Corollary \ref{cor:4.1} follows easily.

\begin{proof}[Proof of Corollary \ref{cor:4.1}]
If \(\psi\) is an invariant of \(f\) then \(\psi\in\mathrm{Aut}(\mathbb D)\), and since it also fixes zero we deduce that \(\psi\) is a rotation and \(\psi^{\circ N}=\mathrm{id}\). Writing \(\psi(z)=\xi z\) we have \(\xi^N=1\) and
\[
    f(z)=\frac{1}{N}\sum_{j=1}^Nf(\xi^j z)=\sum_{j=0}^\infty a_{Nj}z^{Nj}.
\]
If \(\mathrm{gcd}(n,N)=1\) then the identity above implies that \(a_n=0\), a contradiction since \(|a_n|\geq 2/e\). 
\end{proof}

\section{Properties of Blaschke products and the proof of Theorem \ref{thm:3}}

If \(f\in\mathcal B_0\) is normalized so that \(f(0)>0\), then taking \(t=-\log f(0)>0\) we may write \(f=\exp(t\frac{h-1}{h+1})\), where \(h\) is an inner function satisfying \(h(0)=0\). In fact, more can be said.

\medskip

\begin{lem}\label{lem:h is Blaschke}
Let \(f=\exp(t\frac{h-1}{h+1})\) be extremal for \(M_n\). Then \(h\) is a Blaschke product of degree \(N\).
\end{lem}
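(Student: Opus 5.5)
The plan is to solve explicitly for \(h\) in terms of \(\log f\), using the atomic representation \eqref{extremal}. Normalize \(f(0)>0\) and set \(t=-\log f(0)=\sum_{j=1}^N\lambda_j>0\). Write
\[
    G(z):=-\frac{1}{t}\log f(z)=\sum_{j=1}^N\frac{\lambda_j}{t}\,\frac{1+\alpha_jz}{1-\alpha_jz}.
\]
Then \(G\) is a convex combination of Möbius maps sending \(\mathbb D\) onto the right half-plane, and \(G(0)=1\). The relation \(\log f=t\frac{h-1}{h+1}\) says \(G=\frac{1-h}{1+h}\), so \(h=\frac{1-G}{1+G}\). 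This \(h\) is holomorphic in \(\mathbb D\) because \(\re G>0\), maps \(\mathbb D\) into \(\mathbb D\), and satisfies \(h(0)=0\). It remains to show that \(h\) is a finite Blaschke product of degree exactly \(N\).

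\emph{Step 1: \(h\) is rational and extends continuously to \(\overline{\mathbb D}\) with \(|h|=1\) on \(\partial\mathbb D\).} Clearing denominators, \(G=R/S\) with \(S(z)=\prod_{j}(1-\alpha_jz)\) of degree \(N\) and \(R\) of degree at most \(N\). Hence
\[
    h=\frac{S-R}{S+R}
\]
is rational. On \(\partial\mathbb D\) away from the \(\overline{\alpha_j}\), \(G\) is purely imaginary: this is \eqref{eq:radial_limit} divided by \(-t\). So \(|h|=1\) there. At \(z=\overline{\alpha_j}\) we have \(G\to\infty\), so \(h\to-1\). Thus \(h\) has no poles on \(\partial\mathbb D\). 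It also has none in \(\mathbb D\), since \(\re G>0\) there means \(G\neq-1\). So \(h\) is continuous on \(\overline{\mathbb D}\), unimodular on the circle, and therefore a finite Blaschke product. The last implication is the maximum-modulus argument used in the proof of Lemma \ref{proper blaschke}: divide by the Blaschke product formed from the zeros of \(h\) and apply the maximum principle to the quotient and its reciprocal.

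\emph{Step 2: the degree is \(N\).} Count solutions of \(h=-1\) on \(\overline{\mathbb D}\). For a Blaschke product \(B\) of degree \(d\), the equation \(B(z)=\zeta\) with \(\zeta\in\partial\mathbb D\) has exactly \(d\) distinct solutions, all on \(\partial\mathbb D\), because the argument of \(B\) increases strictly by \(2\pi d\) around the circle. Here \(h=-1\) exactly when \(G=\infty\), that is, at the \(N\) distinct points \(\overline{\alpha_1},\dots,\overline{\alpha_N}\). This forces \(\deg h=N\). As an alternative check, the numerator and denominator of \(h=(S-R)/(S+R)\) have degree at most \(N\), so \(\deg h\le N\); and since \(h\) takes the value \(-1\) at \(N\) distinct points, \(\deg h\ge N\).

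The main obstacle is Step 2, specifically making sure that no cancellation between \(S-R\) and \(S+R\) lowers the degree. The preimage count of \(-1\) settles this cleanly, so I will rely on it. It requires confirming that \(h\to-1\) at each \(\overline{\alpha_j}\) and that \(h\neq-1\) elsewhere on \(\overline{\mathbb D}\). Both follow from the fact that \(G\) is finite off the \(\overline{\alpha_j}\) and that \(\lambda_j>0\), which makes each pole genuine.
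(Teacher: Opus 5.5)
Your proposal is correct and follows essentially the same route as the paper. Both arguments write \(h\) as a rational function whose numerator and denominator have degree at most \(N\), check that \(h\) is unimodular on \(\partial\mathbb D\) with \(h=-1\) exactly at \(\overline{\alpha_1},\dots,\overline{\alpha_N}\), divide out the zeros and apply the maximum principle to get a finite Blaschke product, and obtain degree exactly \(N\) from \(\deg h\le N\) together with the \(N\) distinct solutions of \(h=-1\). The only difference is that you rule out poles of \(h\) on \(\overline{\mathbb D}\) directly from \(\re G>0\) in \(\mathbb D\) and \(G\in i\mathbb R\) on the circle, which is a slightly more explicit version of the paper's radial-limit estimate.
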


\begin{proof}
First, we establish that \(h\) has a unimodular radial limit everywhere. It follows from \eqref{eq:radial_limit} that if \(\theta\not\in\{\theta_1,\dots,\theta_N\}\) then the limit of \(\displaystyle\lim_{r\to 1^-}\log f(re^{i\theta})\) exists and 
\[
    h(re^{i\theta})=\frac{t+\log f(re^{i\theta})}{t-\log f(re^{i\theta})}\overset{\,\,r\to 1^-}{\longrightarrow}\frac{t+i\varphi(\theta)}{t-i\varphi(\theta)}=\exp\bigg(2i\arctan\bigg(\frac{\varphi(\theta)}{t}\bigg)\bigg)\in\partial \mathbb D.
\]
On the other hand, if \(\theta\in\{\theta_1\dots,\theta_N\}\) then \(|f(re^{i\theta})|\to 0\). In this case we point out that 
\[
      |1+h(re^{i\theta_j})|^2\leq \frac{|1+h(re^{i\theta_j})|^2}{1-|h(re^{i\theta_j})|^2}=\frac{t}{|\log|f(re^{i\theta_j})||} \overset{\,\,r\to 1^-}{\longrightarrow} 0,
\]
showing that \(h(r\overline{\alpha_j})\to -1\). 

Next, define degree \(N\) polynomials \(q(z)=\prod_{j=1}^N(1-\alpha_jz)\) and \(r(z)=q(z)\log f(z)\) and write
\[
    h(z)=\frac{t+\log f(z)}{t-\log f(z)}=\frac{tq(z)+r(z)}{tq(z)-r(z)}.
\]
This shows that \(h\) is a rational function with at most \(N\) zeros. As \(h\in \mathrm{Hol}(\mathbb D)\) and \(|h(e^{i\theta})|=1\) for all \(\theta\in\mathbb T\), we find that \(h\) is holomorphic in a neighbourhood of \(\overline{ \mathbb D}\). 

Let \(z_1,\dots, z_m\) denote these zeros of \(h\) in \(\mathbb D\), repeated by multiplicity, and observe that \(h(z)/\prod_{j=1}^m\frac{z-z_j}{1-\overline{z_j}z}\) defines a non-vanishing holomorphic function in a neighbourhood of \(\overline{\mathbb D}\) with unit modulus on \(\partial \mathbb D\). Hence it is constant, showing that \(\smash{h(z)=\xi\prod_{j=1}^m\frac{z-z_j}{1-\overline{z_j}z}}\) for \(\xi\in\partial \mathbb D\) and \(m\leq N\). Writing \(h=p_1/p_2\) for degree \(m\) polynomials \(p_1,\,p_2\), observe that since \(h(z)=-1\) has \(N\) distinct solutions, the degree \(m\) polynomial \(p_1+p_2\) has \(N\) distinct roots, so \(m\geq N\). 
\end{proof}

Several of the equivalences of Theorem \ref{thm:3}  rely on properties special to Blaschke products. The only possibly non-standard one we require is stated by Garcia, Mashreghi \& Ross \cite[Thm. 8.2]{GarciaMashreghiRoss2018}.

\medskip

\begin{lem}\label{lem:Blasckhe derivative zeros}
Let \(h\) be a finite Blaschke product of degree \(N\). Then \(h'\) has \(N-1\) zeros in \(\mathbb D\), counting multiplicities.
\end{lem}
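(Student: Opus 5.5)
The plan is to reduce the statement to a count of solutions of \(h(z)=w\) and then use Riemann--Hurwitz, with the argument principle as a cross-check. Write
\[
    h(z)=\xi\prod_{j=1}^N\frac{z-z_j}{1-\overline{z_j}z},
\]
so that \(h\) is rational of degree \(N\) on the Riemann sphere \(\widehat{\mathbb C}\). Its derivative \(h'\) is a rational function, and the task is to locate and count its zeros.

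\textbf{Step 1: symmetry.} Blaschke products satisfy the reflection identity \(h(1/\overline z)=1/\overline{h(z)}\). Differentiating this identity shows that the critical points of \(h\) in \(\widehat{\mathbb C}\setminus\partial\mathbb D\) come in pairs \(z\leftrightarrow 1/\overline z\), with multiplicities preserved.

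\textbf{Step 2: no critical points on the circle.} On \(\partial\mathbb D\) we have
\[
    \frac{zh'(z)}{h(z)}=\sum_{j=1}^N\frac{1-|z_j|^2}{|z-z_j|^2}>0,
\]
so \(h'\neq0\) on \(\partial\mathbb D\).

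\textbf{Step 3: the global count.} A rational map of degree \(N\) has \(2N-2\) critical points in \(\widehat{\mathbb C}\), counted with multiplicity; this is Riemann--Hurwitz for a self-map of the sphere. Step 2 removes the circle, and Step 1 splits the remaining critical points equally between \(\mathbb D\) and \(\widehat{\mathbb C}\setminus\overline{\mathbb D}\). Hence exactly \(N-1\) of them lie in \(\mathbb D\).

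\textbf{Step 4: critical points versus zeros of \(h'\).} The main obstacle is the bookkeeping of critical points at \(\infty\) and at poles. Inside \(\mathbb D\) there are no poles, since every pole \(1/\overline{z_j}\) lies outside \(\overline{\mathbb D}\). At a finite, non-pole point, a critical point of multiplicity \(k\) is exactly a zero of \(h'\) of order \(k\). So the \(N-1\) critical points in \(\mathbb D\) from Step 3 are precisely the zeros of \(h'\) in \(\mathbb D\), counted with multiplicity. The remaining subtleties at \(\infty\) and at the poles only affect the exterior half of the count, which is not needed.

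\textbf{Alternative if Riemann--Hurwitz is to be avoided.} Apply the argument principle to \(h'\) on the circle \(|z|=1\). Step 2 gives \(zh'(z)=|h'(z)|\,h(z)\) on \(\partial\mathbb D\), so \(\arg(zh')=\arg h\) there. Since \(h\) has \(N\) zeros and no poles in \(\mathbb D\), \(\arg h\) increases by \(2\pi N\) around the circle. Therefore \(zh'\) has winding number \(N\), and \(h'\) has winding number \(N-1\). As \(h'\) has no poles in \(\mathbb D\), it has exactly \(N-1\) zeros there. This route is cleaner, and I would present it as the main proof.
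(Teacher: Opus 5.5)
The paper does not prove this lemma. It quotes it from \cite[Thm.~8.2]{GarciaMashreghiRoss2018} and uses it only as a black box in Theorem~\ref{thm:3}\textit{(3)--(4)}. Your proposal is correct and makes the statement self-contained, so the comparison is really between your two routes.

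\textbf{Argument-principle route.} This is the more economical one, and you are right to make it the main proof. The single identity $zh'(z)/h(z)=\sum_j(1-|z_j|^2)/|z-z_j|^2>0$ on $\partial\mathbb D$ does two jobs. It rules out zeros of $h'$ on the circle, and it shows that $zh'$ is a positive multiple of $h$ there. Hence $zh'$ winds $N$ times, and $h'$ winds $N-1$ times. It is worth stating explicitly that $h'$ is holomorphic on a neighbourhood of $\overline{\mathbb D}$, since all poles $1/\overline{z_j}$ lie outside the closed disc. That is what lets you apply the argument principle on $|z|=1$ itself.

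\textbf{Riemann--Hurwitz route.} This yields the same count and tells you slightly more: the other $N-1$ critical points on the sphere are the reflections $1/\overline w$ of the interior ones. It needs the bookkeeping you flagged, and your Step 1 justification is a little loose. ``Differentiating'' $h(1/\overline z)=1/\overline{h(z)}$ does not directly handle pairs such as $z_j\leftrightarrow 1/\overline{z_j}$ (a multiple zero against a multiple pole) or $0\leftrightarrow\infty$. The clean statement is that $h=\sigma\circ h\circ\sigma$ with $\sigma(z)=1/\overline z$ an anticonformal involution of the sphere, so the local degrees of $h$ at $z$ and at $\sigma(z)$ agree.

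Both routes implicitly assume $N\geq 1$, so that the sum in Step 2 is non-empty and strictly positive. This is harmless in the paper's setting.
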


Now we are equipped to prove Theorem \ref{thm:3}. 

\begin{proof}[Proof of Theorem \ref{thm:3}]
If the Krzy\.z conjecture is true then \(f=f_n\), and conditions \textit{(1)-(5)} are immediate. Thus, it suffices to show that each of these conditions implies the conjecture. 

It is convenient to start with \textit{(5)}, which uses ideas independent of the other conditions. If \(a_j=0\) for \(1\leq j\leq \lceil\frac{n-2}{3}\rceil\) then 
\begin{equation}\label{eq:f'h'}
    f'(z)=\frac{2tf(z)h'(z)}{(h(z)+1)^2}
\end{equation}
vanishes to order \(\lceil \frac{n-2}{3}\rceil\) at zero. Hence so too does \(h'\), since \(f\neq 0\) and \(h\neq -1\) in \(\mathbb D\). It follows that \(h\) vanishes to order \(\lceil\frac{n-2}{3}\rceil+1\) at zero since \(h(0)=0\), so for \(k\geq \lceil\frac{n-2}{3}\rceil+1\geq \frac{n+1}{3}\) and a Blaschke product \(B\) we can write \(h(z)=z^kB(z)\). In particular we note that \(h(z)^3\) vanishes to order \(n+1\) at zero. Now it is useful to observe that for \(t>0\) and \(z\in \mathbb D\), 
\[
    \exp\bigg(t\frac{z-1}{z+1}\bigg)=e^{-t}+2te^{-t}z+2t(t-1)e^{-t}z^2\mod z^3.
\]
Thus, thanks to the vanishing order of \(h\), we find that
\[
    f(z)=e^{-t}+2te^{-t}h(z)+2t(t-1)e^{-t}h(z)^2\mod z^{n+1}.
\]
Write \(h(z)=\sum_{j=1}^\infty c_jz^j\) and note that \(\sum_{j=1}^\infty|c_j|^2\leq 1\) (in fact, equality holds) since \(|h|\leq 1\) in \(\mathbb D\). Using this, we estimate the \(n\)th Taylor coefficient of \(h^2\) via the Cauchy Schwarz inequality,
\[
    \bigg|\sum_{j=0}^{n}c_jc_{n-j}\bigg|=\bigg|\sum_{j=1}^{n-1}c_jc_{n-j}\bigg|\leq \bigg(\sum_{j=1}^{n-1}|c_j|^2\bigg)^\frac{1}{2}\bigg(\sum_{j=1}^{n-1}|c_{n-j}|^2\bigg)^\frac{1}{2}=\sum_{j=1}^{n-1}|c_j|^2\leq 1-|c_n|^2.
\]
Therefore, the \(n\)th coefficient of \(f\) is bounded by
\[
    |a_n|=2te^{-t}\bigg|c_n+(t-1)\sum_{j=1}^{n-1}c_jc_{n-j}\bigg|\leq 2te^{-t}(|c_n|+|t-1|(1-|c_n|^2)).
\]

It remains to optimize over \(t\). As \(t\geq -\log(\sqrt{2}-1)\approx 0.88\) for extremal functions (see e.g. \cite[\S2]{Peretz1991}) it suffices to consider \(t\geq \frac{1}{2}\). First we point out that \(|c_n|+\frac{1}{2}(1-|c_n|^2)\leq 1\) for \(|c_n|\in[0,1]\), with equality if and only if \(|c_n|=1\). Therefore for \(t\in[-\frac{1}{2},\frac{3}{2}]\) we have \(|t-1|\leq \frac{1}{2}\) and
\[
    |a_n|\leq 2te^{-t}(|c_n|+|t-1|(1-|c_n|^2))\leq 2te^{-t}\bigg(|c_n|+\frac{1}{2}(1-|c_n|^2)\bigg)\leq 2te^{-t}\leq \frac{2}{e},
\]
with equality throughout if and only if \(|c_n|=1\) and \(t=1\). If \(t\geq \frac{3}{2}\) then we have the bound \(|a_n|\leq 2te^{-t}(|c_n|+(t-1)(1-|c_n|^2))\). Maximizing over \(|c_n|\in[0,1]\), assuming that \(t\geq \frac{3}{2}\), gives
\[
    |a_n|\leq 2te^{-t}\bigg(\frac{1}{4(t-1)}+t-1\bigg).
\]
The maximum of this function over \(t\geq \frac{3}{2}\) occurs at the largest real root \(t_0\) of the polynomial \(4t^{4}-20t^{3}+33t^{2}-21t+5\). It is not difficult to check that \(t_0\geq 2\), so if \(t\geq \frac{3}{2}\) then
\[
    |a_n|\leq 2t_0e^{-t_0}\bigg(\frac{1}{4(t_0-1)}+t_0-1\bigg)\leq 2t_0e^{-t_0}\bigg(t_0-\frac{3}{4}\bigg)\leq \max_{t\geq 2}\bigg(2te^{-t}\bigg(t-\frac{3}{4}\bigg)\bigg).
\]
Computing the maximum on the right shows that if \(t\geq \frac{3}{2}\) then 
\[
    |a_n|\leq \frac{8+\sqrt{73}}{2}e^{-\frac{11+\sqrt{73}}{8}}<\frac{2}{e}.
\]
Regardless of the value of \(t\) then, we have \(|a_n|\leq2/e\) and equality holds if and only if \(t=1\) and \(|c_n|=1\). If \(|c_n|=1\) then \(|c_j|=0\) for all \(j\neq n\) and \(h(z)=\xi z^n\) for some \(\xi\in\partial\mathbb D\), so
\[
    f(z)=\exp\bigg(t\frac{h(z)-1}{h(z)+1}\bigg)=\exp\bigg(\frac{\xi z^n-1}{\xi z^n+1}\bigg).
\]
The \(n\)th coefficient of this function is \(2\xi/e\) so we deduce that \(\xi=1\) and \(f=f_n\). Thus if \textit{(5)} holds for any extremal function, then the Krzy\.z conjecture is true. 


\medskip

Next assume \textit{(1)} and observe that the \(n\)th coefficient of \(f(e^{i\theta}z)\) is \(e^{in\theta}a_n\). As \(f(z)\) and \(f(e^{i\theta}z)\) are both extremal it follows that \(\re e^{in\theta}a_n=\re a_n\). Furthermore, extremality implies that \(\lambda=0\) is a local maximum of the map \(F(\lambda)= M_n(f(e^{i\lambda}z))=\re e^{in\lambda}a_n\), so \(F'(0)=0\). Equivalently, \(\re ina_n=0\) and \(\im a_n=0\). Thus, \(a_n=\re e^{in\theta}a_n=a_n\cos(n\theta)\) and \(\cos(n\theta)=1\). As \(0<\theta<4\pi /n\) by assumption, we find that \(\theta=2\pi/n\). It follows that
\[
    f(z)=\frac{1}{n}\sum_{j=1}^nf(e^{2\pi i i/n}z)=a_0+a_nz^n\mod z^{2n},
\]
and the conjecture again follows from \textit{(5)}. 

\medskip

To conclude the proof, we show that \textit{(2)-(4)} all imply that \(h\) is a monomial, and then we show that this implies the conjecture. If \textit{(2)} holds then since \(z\to t\frac{z-1}{z+1}\) is a biholomorphism from \(\mathbb D\) to the left half-plane, we see that \textit{(2)} is equivalent to the statement that \(h(z)=h(0)\) if and only if \(z=0\). As \(h\) is degree \(N\) Blaschke product by Lemma \ref{lem:h is Blaschke}, it follows that \(h(z)=\xi  z^N\).

Similarly, if \textit{(3)} holds, then since \(f'=0\) if and only if \(h'=0\) by \eqref{eq:f'h'}, \(h'\) is non-vanishing away from \(z=0\). As \(h'\) has exactly \(N-1\) zeros in \(\mathbb D\) (counting multiplicities) by Lemma \ref{lem:Blasckhe derivative zeros}, we deduce that \(h'\) vanishes to order \(N-1\) at zero. Furthermore \(h(0)=0\), so \(h\) vanishes to order \(N\) at zero, and since \(h\) is a degree \(N\) Blaschke product this implies again that \(h(z)=\xi z^N\). If \textit{(4)} holds, then once again \(h'\) vanishes to order \(N-1\) at zero, and the same implication holds. 

Thus, if any of \textit{(2)-(4)} hold then \(h(z)=\xi z^N\). Taylor expanding \(\exp(t\frac{z-1}{z+1})=\sum_{j=0}^\infty \beta_j(t)z^j\), it follows that 
\[
    f(z)=\exp\bigg(t\frac{h(z)-1}{h(z)+1}\bigg)=\sum_{j=0}^\infty \beta_j(t)\xi^j z^{Nj}.
\]
If \(N\) does not divide \(n\) then \(a_n=0\), a contradiction since \(a_n\geq M_n(f_n)=2/e\). Thus \(n=jN\) for some \(j\in\mathbb N\) and we see that \(a_n=\xi^n\beta_j(t)=|\beta_j(t)|\), where the second equality holds by maximizing over \(\xi\). It is shown by Lewandowski \& Szynal \cite[(7)]{LewandowskiSzynal1998}, and separately by Prokhorov \cite{Prokhorov2001}, that the coefficient functions \(\beta_j(t)\) can be written in terms of generalized Laguerre polynomials, 
\[
    \beta_j(t)=(-1)^je^{-t}L_j^{(-1)}(2t),\qquad \textrm{where}\qquad L_n^{(\alpha)}(x)=\sum_{k=0}^{n}\binom{n+\alpha}{n-k}\frac{\left(-x\right)^{k}}{k!}.
\]
Rooney \cite{Rooney1985} showed that for any \(j\geq 0\), \(t\geq 0\) and \(\alpha\leq -\frac{1}{2}\), the Laguerre polynomials \(L_j^{(\alpha)}\) satisfy 
\[
    |L_j^{(\alpha)}(t)|\leq \frac{\sqrt{(2j)!}}{2^{j+\alpha+1/2}j!}e^{t/2}.
\]
Taking \(\alpha=-1\) and using the formula for \(\beta_j\) above shows that \(\sup_{t\geq 0}|\beta_j(t)|\leq \sqrt{2\left(2j\right)!}/2^{j}j!\). For \(j\geq 5\) this upper bound is strictly smaller than \(2/e\). One can also optimize \(|\beta_j(t)|\) for \(2\leq j\leq 4\) using calculus to compute
\(\sup_{t\geq 0}|\beta_2(t)|=|\beta_2(\frac{3+\sqrt{5}}{2})|\approx0.61801,\)
\begin{gather*}
    \sup_{t\geq 0}|\beta_3(t)|= \bigg|\beta_3\bigg(2+\sqrt{6}\cos \bigg(\frac{1}{3}\arctan \bigg(\frac{\sqrt{29}}{5} \bigg) \bigg)\bigg)\bigg|\approx 0.55191, \\
    \sup_{t\geq 0}|\beta_4(t)|=\bigg|\beta_4\bigg(3+2\sqrt{3}\cos\left(\frac{1}{3}\arctan\left(\sqrt{11}\right)\right)\bigg)\bigg|\approx 0.50755.
\end{gather*}
It follows that \(|\beta_j(t)|<2/e\) for all \(t\geq 0\) when \(j\geq 2\). Therefore \(j=1\), meaning that \(N=n\), \(\xi=1\), and \(a_n=|\beta_1(t)|=2te^{-t}\). The maximum of this function occurs at \(t=1\) where \(a_n=2/e\), with equality holding if and only if \(f=f_n\), as we wished to show. 
\end{proof}

\section{The Fej\'er lemma, positivity, and the proof of Theorem \ref{thm:2}}

Fix \(n\in\mathbb N\) and an extremal function \(\smash{f(z)=\sum_{j=0}^\infty a_jz^j}\) as in \eqref{extremal}, normalized so that \(a_0,a_n>0\). Recall that by \cite[Prop. 3(e)]{MartinSawyerTueroVukotic2015}, if \(P\) is as in Theorem \ref{thm:X} then \(\re P\geq 0\) on \(\overline{\mathbb D}\). Thus, the function \(T(\theta)=\re P(e^{i\theta})\) is a non-negative trigonometric polynomial of degree \(n\). To exploit this fact, we recall a well-known representation result.

\medskip

\begin{lem}[Fej\'er-Riesz]
If \(T\) is a non-negative trigonometric polynomial with degree \(n\), then there exists a polynomial \(p\) of degree \(n\) which is non-vanishing in \(\mathbb D\) such that \(T(\theta)=|p(e^{i\theta})|^2\). 
\end{lem}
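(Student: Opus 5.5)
The plan is the classical root-factorization argument. Write
\[
T(\theta)=\sum_{k=-n}^{n}c_ke^{ik\theta},\qquad c_{-k}=\overline{c_k},\qquad c_n\neq 0,
\]
where the conjugate symmetry holds because \(T\) is real-valued, and \(c_n\neq0\) because \(T\) has degree \(n\). Consider the algebraic polynomial \(R(z)=z^n\sum_{k=-n}^nc_kz^k\). It has degree exactly \(2n\), and \(R(0)=c_{-n}=\overline{c_n}\neq0\). For \(z=e^{i\theta}\) we have \(R(e^{i\theta})=e^{in\theta}T(\theta)\). The symmetry \(c_{-k}=\overline{c_k}\) translates into the functional identity
\[
R(z)=z^{2n}\,\overline{R(1/\overline{z})}.
\]
Consequently \(w\) is a zero of \(R\) if and only if \(1/\overline{w}\) is, with the same multiplicity. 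Zeros off the circle therefore come in pairs \(\{w,1/\overline w\}\) with \(|w|>1>|1/\overline w|\).

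The step I expect to need the most care is the zeros on \(\partial\mathbb D\). Since \(T\geq0\), every zero \(e^{i\theta_0}\) of \(T\) is a local minimum of a real-analytic function. Hence it has even order as a zero of \(T\) in the variable \(\theta\). Because \(R(e^{i\theta})=e^{in\theta}T(\theta)\) and \(\theta\mapsto e^{i\theta}\) is a local biholomorphism, the multiplicity of \(e^{i\theta_0}\) as a root of \(R\) equals its order as a zero of \(T\), so it is even. Together with the pairing above, this lets us split the \(2n\) roots of \(R\) (with multiplicity) into two multisets of size \(n\). The first consists of one root from each pair \(\{w,1/\overline w\}\), namely the one with \(|w|>1\), together with half of each unimodular root. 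The second consists of the reflected roots.

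Let \(w_1,\dots,w_n\) be the first multiset, so \(|w_j|\geq1\) for each \(j\), and set \(p(z)=C\prod_{j=1}^n(z-w_j)\) with a constant \(C>0\) to be fixed. This \(p\) has degree \(n\) and no zeros in \(\mathbb D\). Factor
\[
R(z)=\overline{c_n}\prod_{j=1}^n(z-w_j)(z-1/\overline{w_j})
\]
(for unimodular \(w_j\), the second factor is the other copy of that root). On the circle we use the elementary identity \(|e^{i\theta}-1/\overline w|=|e^{i\theta}-w|/|w|\), which follows from \(|\overline w e^{i\theta}-1|=|e^{i\theta}-w|\). This gives
\[
T(\theta)=|R(e^{i\theta})|=\frac{|c_n|}{\prod_j|w_j|}\prod_{j=1}^n|e^{i\theta}-w_j|^2 .
\]
Here \(T(\theta)=|R(e^{i\theta})|\) uses \(T\geq0\). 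Choosing \(C=\big(|c_n|/\prod_j|w_j|\big)^{1/2}\) yields \(T(\theta)=|p(e^{i\theta})|^2\), as claimed.
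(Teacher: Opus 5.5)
Your proof is correct. It is the classical argument for the Fej\'er--Riesz theorem: use the reflection symmetry $R(z)=z^{2n}\overline{R(1/\overline{z})}$, observe that $T\geq 0$ forces unimodular roots to have even multiplicity, and keep the roots with $|w|\geq 1$.

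The paper gives no proof of this lemma; it quotes it as a well-known representation result, so there is no competing route to compare with.

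One small slip: the leading coefficient of $R(z)=\sum_{k=-n}^{n}c_kz^{k+n}$ is $c_n$, while $\overline{c_n}$ is its constant term. The factorization should therefore read $R(z)=c_n\prod_{j=1}^n(z-w_j)(z-1/\overline{w_j})$. Only $|c_n|$ enters afterwards, so your choice of $C$ and the conclusion are unaffected.
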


\medskip

Thus, we can write \(\re P(e^{i\theta})=|p(e^{i\theta})|^2\) for a polynomial \(p\) of degree \(n\) with no zeros in \(\mathbb D\). Letting \(z_1,\dots, z_n\in \mathbb D^c\) denote the zeros of \(p\), we may write
\[
    p(z)=c\prod_{j=1}^n(z-z_j).
\]
As \(p\) is unique up to multiplication by a unimodular constant, we may also assume that \(c\) is real and positive. Since \(\re P(\alpha_j)=0\) for \(\alpha_1,\dots,\alpha_N\), we see that \(p(\alpha_j)=0\) for \(1\leq j\leq N\). More generally, any zero of \(\re P\) on \(\partial \mathbb D\) is a zero of \(p\). Now we are ready to prove Theorem \ref{thm:2}.

\begin{proof}[Proof of Theorem \ref{thm:2}]
First let \(e_k\) denote the \(k\)th elementary symmetric polynomial in \(n\) variables,
\[
    e_k(w_1,\dots, w_n)=\sum_{1\leq j_1<\cdots<j_k\leq n}\prod_{\ell=1}^kw_{j_\ell}.
\]
For convenience we also set \(e_0\equiv1\). Using these polynomials, we determine the coefficients of \(p\) in terms of its roots using Vieta's formulas,
\[
    p(z)=c\prod_{j=1}^n(z-z_j)=c\sum_{j=0}^n (-1)^{n-j}e_{n-j}(z_1,\dots,z_n)z^j.
\]
Our goal is to write the coefficients of \(\re P(e^{i\theta})\) in terms of \(z_1,\dots,z_n\). For simplicity, we abbreviate \(v_k=e_k(z_1,\dots,z_n)\) for \(1\leq k\leq n\) and set \(v_k=0\) for \(k\leq 0\) and \(k>n\). Combining the formula above with the identity \(\re P(e^{i\theta})=p(e^{i\theta})\overline{p(e^{i\theta})}\), we compute
\begin{align*}
    \re P(e^{i\theta})&=c^2\bigg(\sum_{j=0}^n (-1)^{n-j}v_{n-j}e^{ij\theta}\bigg)\bigg(\sum_{j=0}^n (-1)^{n-j}\overline{v_{n-j}}e^{-ij\theta}\bigg)\\
    &=(-1)^nc^2\sum_{j=0}^{2n}(-1)^{j}\bigg(\sum_{k=0}^jv_{n-j+k}\overline{v_k}\bigg)e^{i(j-n)\theta}.
\end{align*}
Using that \(v_k=0\) for \(k\not\in\{1,\dots, n\}\) and re-indexing, this simplifies to
\begin{align*}
    \re P(e^{i\theta})&=(-1)^nc^2\sum_{j=0}^{2n}(-1)^{j}\bigg(\sum_{k=-\infty}^\infty v_{n-j+k}\overline{v_k}\bigg)e^{i(j-n)\theta}=c^2\sum_{j=-n}^{n}(-1)^{j}\bigg(\sum_{k=-\infty}^\infty v_{k-j}\overline{v_k}\bigg)e^{ij\theta}.
\end{align*}
Writing \(d_j=(-1)^{j}\sum_{k=-\infty}^\infty v_{k-j}\overline{v_k}\) and accounting for vanishing terms, we find for \(j\geq 0\) that
\[
    d_j=(-1)^{j}\sum_{k=j}^n v_{k-j}\overline{v_k}=(-1)^{j}\sum_{k=0}^{n-j} v_{k}\overline{v_{j+k}}.
\]
It is easily verified that \(d_j=\overline{d_{-j}}\). To further refine this formula, we point out that since \(z_1,\dots,z_n\in\partial \mathbb D\) it holds that
\[
    \overline{v_{j+k}}=\overline{e_{j+k}(z_1,\dots, z_n)}=e_{j+k}\bigg(\frac{1}{z_1},\dots,\frac{1}{z_n}\bigg)=\frac{e_{n-j-k}(z_1,\dots,z_n)}{e_n(z_1,\dots,z_n)}.
\]
Altogether we see that \(\re P(e^{i\theta})=c^2\sum_{j=-n}^n d_je^{ij\theta}\), where \(d_{-j}=\overline{d_j}\) for \(j<0\), and for \(0\leq j\leq n\),
\[
    d_j=\frac{(-1)^{j}}{e_n(z_1,\dots,z_n)}\sum_{k=0}^{n-j} e_k(z_1,\dots,z_n)e_{n-j-k}(z_1,\dots,z_n).
\]

On the other hand, using that \(2\re z=z+\overline{z}\) we compute directly
\[
    \re P(e^{i\theta})=a_n+2\sum_{j=1}^n\re\{a_{n-j}e^{ij\theta}\}=a_n+\sum_{j=1}^na_{n-j}e^{ij\theta}+\sum_{j=1}^n\overline{a_{n-j}}e^{-ij\theta}.
\]
Equating coefficients in our two representations of \(\re P(e^{i\theta})\), which is justified by orthogonality of the functions \(\{e^{ij\theta}\}_{j\in\mathbb Z}\) on the circle, we find that \(a_{n-j}=c^2d_j\) for \(0\leq j\leq n\). Taking \(j=n\) gives \(c^2=(-1)^na_0e_n(z_1,\dots,z_n)\), so
\[
    a_{n-j}=(-1)^{n-j}a_0\sum_{k=0}^{n-j} e_k(z_1,\dots,z_n)e_{n-j-k}(z_1,\dots,z_n),
\]
and upon re-indexing, 
\begin{equation}\label{Rosetta stone}
    a_{j}=(-1)^{j}a_0\sum_{k=0}^{j} e_{j-k}(z_1,\dots,z_n)e_{k}(z_1,\dots,z_n).
\end{equation}

It is useful to inspect some special cases. Taking \(j=0\) in \eqref{Rosetta stone} and using \(v_{n-k}=\overline{v_k}v_n\) gives
\[
    a_{n}=(-1)^{n}a_0\sum_{k=0}^{n} v_kv_{n-k}=(-1)^{n}v_na_0\sum_{k=0}^{n} |v_k|^2.
\]
Our normalizations \(a_n,a_0>0\) thus ensure that \((-1)^nv_n\) is real and positive. It is also a unimodular constant, so \((-1)^nv_n=1\) and \(z_1\cdots z_n=v_n=(-1)^n\). Therefore
\[
    a_{n}=a_0\sum_{k=0}^{n} |v_k|^2=a_0\sum_{k=0}^{n} |e_k(\alpha_1,\dots,\alpha_n)|^2=a_0\bigg(2+\sum_{k=0}^{n-1} |e_k(\alpha_1,\dots,\alpha_n)|^2\bigg),
\]
where we have used that \(e_0\equiv 1\) and \(|e_n|=1\). 

The identity \(z_1\cdots z_n=(-1)^n\) also gives \(e_{k}(\overline{z_1},\dots, \overline{z_n})=(-1)^ne_{n-k}(z_1,\dots,z_n)\) for all \(k\). Taking \(e_j=0\) for \(j\not\in \{0,\dots, n\}\) and expanding using Vieta's formulas, we find that
\begin{align*}
    a_0\prod_{j=1}^n(1-z_jz)^2&=a_0\bigg(\prod_{j=1}^n(z-\overline{z_j})\bigg)^2=a_0\bigg(\sum_{j=0}^n(-1)^{j}e_{n-j}(\overline{z_1},\dots, \overline{z_n})z^{j}\bigg)^2\\
    &=a_0\sum_{j=0}^{2n}(-1)^{j}\bigg(\sum_{k=0}^je_{n-(j-k)}(\overline{z_1},\dots, \overline{z_n})e_{n-k}(\overline{z_1},\dots, \overline{z_n})\bigg)z^j\\
    &=a_0\sum_{j=0}^{2n}(-1)^{j}\bigg(\sum_{k=0}^je_{j-k}(z_1,\dots,z_n)e_{k}(z_1,\dots,z_n)\bigg)z^j.
\end{align*}
Comparing to \eqref{Rosetta stone}, it is clear that \(a_0\prod_{j=1}^n(1-z_jz)^2=a_0+a_1z+\cdots+a_nz^n\mod z^{n+1}\). 
\end{proof}

Next, we observe that the first \(n+1\) Taylor coefficients of \(\log f\) only depend on the first \(n+1\) coefficients of \(f\), meaning that 
\[
    \log f(z)=b_0+2\sum_{j=1}^n\log(1-zz_j)\mod z^{n+1}.
\]
Taylor expanding the logarithms and comparing the coefficients of \(z,\dots,z^n\) coefficients yields Corollary \ref{cor:wow!} at once.

\section{Assorted technical results}

The results of this section are not needed anywhere above, but they highlight interesting facts which may be useful in
further investigations of the conjecture. We begin with an elementary formula for the function \(g=\log f\). 

\begin{lem}\label{lem:formula for g}
Fix \(n\in\mathbb N\), let \(f=e^g\) be extremal for \(M_n\), and write \(f(0)=e^{-t}\). Then \(g\) has exactly \(N\) zeros \(\beta_1,\dots,\beta_N\in\partial\mathbb D\) and it holds that
\[
    g(z)=t\frac{\prod_{j=1}^N(z-\beta_j)}{\prod_{j=1}^N(z-\overline{\alpha_j})}.
\]
\end{lem}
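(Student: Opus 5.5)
We will work directly with the explicit form \eqref{extremal} and treat \(g=\log f\) as a rational function of \(z\). Write \(g(z)=-\sum_{j=1}^N\lambda_j\frac{1+\alpha_jz}{1-\alpha_jz}\). Clearing denominators over the common denominator \(\prod_{j=1}^N(1-\alpha_jz)\) gives \(g=R/\prod_j(1-\alpha_j z)\), where \(R\) is a polynomial of degree at most \(N\). The poles \(\overline{\alpha_j}\) are simple, with residues proportional to \(\lambda_j\neq 0\), so none of them cancels. We then rewrite the denominator as \(\prod_j(1-\alpha_jz)=\big(\prod_j(-\alpha_j)\big)\prod_j(z-\overline{\alpha_j})\), which puts the denominator in the claimed form.

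To identify the numerator, we first compute the leading coefficient. As \(z\to\infty\), each term \(\frac{1+\alpha_jz}{1-\alpha_jz}\to -1\), so \(g(z)\to\sum_j\lambda_j=-b_0=t\), using \(f(0)=e^{b_0}=e^{-t}\). Hence \(\deg R=N\) exactly, and the ratio of leading coefficients equals \(t\). This gives
\[
g(z)=t\frac{\prod_{j=1}^N(z-\beta_j)}{\prod_{j=1}^N(z-\overline{\alpha_j})},
\]
where \(\beta_1,\dots,\beta_N\) are the roots of \(R\), counted with multiplicity. Since the poles do not cancel, these \(\beta_j\) are exactly the zeros of \(g\) in \(\mathbb C\).

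The main step is to show that every \(\beta_j\) lies on \(\partial\mathbb D\), and that they are distinct so that there are "exactly \(N\)" of them. We will use the boundary description \eqref{eq:radial_limit}: on the circle away from the \(\theta_j\), \(g(e^{i\theta})=i\varphi(\theta)\) with \(\varphi\) real. By Lemma \ref{properties on boundary}(i), \(\varphi\) increases strictly from \(-\infty\) to \(\infty\) on each arc \((\theta_j,\theta_{j+1})\). So each of the \(N\) arcs contains exactly one simple zero of \(\varphi\), hence of \(g\); simplicity follows from \(\varphi'>0\) by Lemma \ref{properties on boundary}(ii). This produces \(N\) distinct zeros on \(\partial\mathbb D\). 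Since \(R\) has degree exactly \(N\), these are all the zeros, which gives the count and shows there are none off the circle. As a consistency check, \(\re g=\log|f|<0\) in \(\mathbb D\), so \(g\) has no zeros inside the disk anyway.

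The only points needing care are that the leading coefficient of \(R\) is nonzero, which amounts to \(t=\sum\lambda_j>0\), and that the zeros found on the arcs avoid the poles. Both are immediate.
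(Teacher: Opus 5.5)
Your proof is correct, and it takes a different route from the paper's.

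\textbf{The paper's route.} It locates the zeros using the Blaschke product of Lemma \ref{lem:h is Blaschke}.
\begin{itemize}
\item Writing \(g=t\frac{h-1}{h+1}\), a zero of \(g\) forces \(|h(z)|=1\). Since \(h\) is a finite Blaschke product, this gives \(|z|=1\).
\item The count \(N\) then comes from \(\varphi\) having exactly one zero on each arc \((\theta_j,\theta_{j+1})\).
\item The constant \(\gamma=t\) comes from comparing the leading coefficients of \(\xi\prod_j(z-z_j)\mp\prod_j(1-\overline{z_j}z)\). Here \(h(0)=0\) is used to make \(\prod_j\overline{z_j}\) vanish.
\end{itemize}

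\textbf{Your route.} You work only with the partial-fraction form \eqref{extremal}.
\begin{itemize}
\item The limit \(g(z)\to\sum_j\lambda_j=t\) as \(z\to\infty\) shows at once that the numerator has degree exactly \(N\) and that the constant is \(t\).
\item The nonvanishing residues \(2\lambda_j\overline{\alpha_j}\) rule out cancellation with the denominator.
\item The \(N\) distinct boundary zeros given by Lemma \ref{properties on boundary} then exhaust the roots of the numerator. So no separate argument is needed to exclude zeros off the circle.
\end{itemize}

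\textbf{Comparison.} Your version is more self-contained: it bypasses Lemma \ref{lem:h is Blaschke}, with its radial-limit analysis, and the \(h(0)=0\) trick entirely. It also handles multiplicity explicitly, since the degree count forces each \(\beta_j\) to be a simple zero; the paper leaves this implicit. The paper's version instead identifies the zeros of \(g\) structurally as the solutions of \(h=1\). That fits the Blaschke-product viewpoint it relies on in the proofs of Theorems \ref{thm:4} and \ref{thm:3}.
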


\begin{proof}
Writing \(g=t\frac{h-1}{h+1}\), it is clear that \(\re g(z)=0\) only if \(|h(z)|=1\). As \(h\) is a finite Blaschke product, this only occurs if \(|z|=1\). Recalling that \(g(e^{i\theta})=i\varphi(\theta)\), we see that \(g(e^{i\theta})=0\) if and only if \(\varphi(\theta)=0\). On each interval \((\theta_j,\theta_{j+1})\), the function \(\varphi\) vanishes exactly once at some point \(\mu_j\), meaning that \(\varphi\) has \(N\) zeros. The points \(\beta_j=e^{i\mu_j}\) are thus the \(N\) zeros of \(g\). 

As \(g\) is rational, we may write \(g(z)=p(z)/q(z)\) for degree \(N\) polynomials \(p\) and \(q\). As \(q\) must vanish at \(\overline{\alpha_1,}\dots,\overline{\alpha_N}\) (where \(g\) has poles), and \(p\) must vanish at \(\beta_1,\dots,\beta_N\), we may write
\[
    g(z)=\gamma\frac{\prod_{j=1}^N(z-\beta_j)}{\prod_{j=1}^N(z-\overline{\alpha_j})},\qquad \gamma\in\mathbb C.
\]
To determine the value of \(\gamma\), we write \(h(z)=\xi\prod_{j=1}^N\frac{z-z_j}{1-\overline{z_j}z}\) for \(z_1,\dots, z_N\in \mathbb D\) and observe that
\[
    \gamma\frac{\prod_{j=1}^N(z-\beta_j)}{\prod_{j=1}^N(z-\overline{\alpha_j})}=g(z)=t\frac{h(z)-1}{h(z)+1}=t\frac{\xi\prod_{j=1}^N\frac{z-z_j}{1-\overline{z_j}z}-1}{\xi\prod_{j=1}^N\frac{z-z_j}{1-\overline{z_j}z}+1}=t\frac{\xi\prod_{j=1}^N(z-z_j)-\prod_{j=1}^N(1-\overline{z_j}z)}{\xi\prod_{j=1}^N(z-z_j)+\prod_{j=1}^N(1-\overline{z_j}z)}.
\]
The leading coefficients of the polynomials in the numerator and denominator on the right are respectively \(\xi-(-1)^N\prod_{z=1}^N\overline{z_j}\) and \(\xi+(-1)^N\prod_{z=1}^N\overline{z_j}\). As \(h(0)=0\), there exists some \(j\) such that \(z_j=0\), so the leading coefficients are equal. It follows that \(\gamma=t\).
\end{proof}

As usual, we may restrict to \(z\in\partial \mathbb D\) to recover a formula for the function \(\varphi\). 

\begin{cor}
Let \(f\) be extremal for \(M_n\), and write \(f(e^{i\theta})\overset{a.e.}{=}e^{i\varphi(\theta)}\). If \(\mu_1,\dots,\mu_N\) denote the zeros of \(\varphi\) and \(\alpha_j=e^{-i\theta_j}\), then 
\[
    \varphi(\theta)=t\frac{\prod_{j=1}^N\sin(\frac{\theta-\mu_j}{2})}{\prod_{j=1}^N\sin(\frac{\theta-\theta_j}{2})}.
\]
\end{cor}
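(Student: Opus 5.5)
The plan is to restrict the formula of Lemma \ref{lem:formula for g} to the unit circle and convert each linear factor into a sine. Take \(z=e^{i\theta}\) with \(\theta\notin\{\theta_1,\dots,\theta_N\}\). Then \(g(e^{i\theta})=i\varphi(\theta)\) by the normalization of \(\varphi\) fixed after Corollary \ref{cor:formulas for varphi}. In the proof of Lemma \ref{lem:formula for g} the zeros are \(\beta_j=e^{i\mu_j}\), and the poles are \(\overline{\alpha_j}=e^{i\theta_j}\). The lemma therefore gives
\[
    i\varphi(\theta)=t\frac{\prod_{j=1}^N(e^{i\theta}-e^{i\mu_j})}{\prod_{j=1}^N(e^{i\theta}-e^{i\theta_j})}.
\]

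For each factor I will use the elementary identity
\[
    e^{i\theta}-e^{i\mu}=e^{i(\theta+\mu)/2}\big(e^{i(\theta-\mu)/2}-e^{-i(\theta-\mu)/2}\big)=2i\,e^{i(\theta+\mu)/2}\sin\Big(\frac{\theta-\mu}{2}\Big).
\]
Applying this to every factor in the numerator and denominator, the factors \((2i)^N\) cancel. What remains is
\[
    i\varphi(\theta)=t\,\exp\Big(\frac{i}{2}\sum_{j=1}^N(\mu_j-\theta_j)\Big)\frac{\prod_{j=1}^N\sin(\frac{\theta-\mu_j}{2})}{\prod_{j=1}^N\sin(\frac{\theta-\theta_j}{2})}.
\]

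The only real step is to show that the prefactor \(e^{\frac{i}{2}\sum(\mu_j-\theta_j)}\) equals \(i\), so that it cancels the \(i\) on the left. The argument is that \(\varphi\) and the sine ratio are both real-valued and not identically zero. Hence the unimodular constant \(e^{\frac i2\sum(\mu_j-\theta_j)}\) must lie in \(\{\pm i\}\), which gives \(i\varphi=\pm i\,t\cdot(\text{ratio})\). The sign depends on how the representatives \(\mu_j,\theta_j\in\mathbb R\) are chosen, because shifting any one of them by \(2\pi\) flips the sign of the corresponding sine. I expect fixing this sign to be the main obstacle.

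I will resolve the sign by local behaviour near a pole. By Lemma \ref{properties on boundary}\textit{(i)}, \(\varphi\to+\infty\) as \(\theta\to\theta_{j+1}^-\). Alternatively, \eqref{eq:radial_limit} shows that near \(\theta_j\) the dominant term of \(\varphi\) is \(-\lambda_j\cot(\frac{\theta-\theta_j}{2})\approx -2\lambda_j/(\theta-\theta_j)\). I will choose the ordering \(0\leq\theta_1<\mu_1<\theta_2<\cdots<\theta_N<\mu_N<\theta_1+2\pi\), using the interlacing from the proof of Lemma \ref{lem:formula for g}. With this choice I will compare the sign of the sine ratio just to the right of \(\theta_1\) with the sign of \(\varphi\) there. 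That comparison fixes the sign; if necessary I will absorb a sign by replacing one \(\mu_j\) with \(\mu_j+2\pi\), which the statement permits since the \(\mu_j\) are only specified as zeros. Finally, both sides are continuous off \(\{\theta_j\}\), so the identity holds on all of \(\mathbb T\setminus\{\theta_1,\dots,\theta_N\}\), in particular almost everywhere.
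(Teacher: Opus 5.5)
Your proof is correct and has the same structure as the paper's. Both restrict the rational formula for \(g\) from Lemma~\ref{lem:formula for g} to the circle, convert each linear factor into a sine, and note that the unimodular prefactor \(e^{\frac i2\sum_j(\mu_j-\theta_j)}\) must be \(\pm i\). The only difference is how the sign is fixed.

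\textbf{How the paper fixes the sign.} It uses the same interlacing order \(\theta_1<\mu_1<\theta_2<\cdots<\theta_N<\mu_N\), but argues globally. The sum \(s=\sum_j(\mu_j-\theta_j)\) is exactly the measure of \(\{\varphi<0\}\), so \(s\in(0,2\pi)\). Hence the prefactor has argument in \((0,\pi)\) and must equal \(i\). As a byproduct this gives \(s=\pi\), i.e.\ \(|\{\varphi<0\}|=|\{\varphi>0\}|=\pi\), which the paper records right afterwards.

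\textbf{Your route.} The local comparison at \(\theta_1^+\) works equally well. However, you should carry out the sign count rather than hedge with shifting some \(\mu_j\) by \(2\pi\); that fallback is not needed. Take \(\theta\) slightly larger than \(\theta_1\).
\begin{itemize}
\item In the numerator, every \((\theta-\mu_j)/2\) lies in \((-\pi,0)\), so the product has sign \((-1)^N\).
\item In the denominator, only the \(j=1\) factor is positive, since \((\theta-\theta_j)/2\in(-\pi,0)\) for \(j\geq 2\). So the product has sign \((-1)^{N-1}\).
\item The sine ratio is therefore negative, which matches \(\varphi\to-\infty\) there. So the constant is \(+i\) with your ordering as chosen.
\end{itemize}
Once you know the prefactor equals \(i\), your route also recovers the paper's byproduct: \(e^{\frac i2 s}=i\) with \(s\in(0,2\pi)\) forces \(s=\pi\). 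The paper's argument is slightly slicker because it gets the sign and \(s=\pi\) in one step, without any local asymptotics of \(\varphi\).

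A minor remark: your identity \(e^{i\theta}-e^{i\mu}=2i\,e^{i(\theta+\mu)/2}\sin(\frac{\theta-\mu}{2})\) is the correct one. The paper's displayed factorization drops a factor \(e^{i\theta/2}\) in each term, which is harmless because these factors cancel in the ratio.
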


\begin{proof}
A straightforward calculation shows that
\[
    g(e^{i\theta})=t\frac{\prod_{j=1}^N(e^{i\theta}-e^{i\mu_j})}{\prod_{j=1}^N(e^{i\theta}-e^{i\theta_j})}=t\frac{\prod_{j=1}^Ne^{i\frac{\mu_j}{2}}(e^{i\frac{\theta-\mu_j}{2}}-e^{i\frac{\mu_j-\theta}{2}})}{\prod_{j=1}^Ne^{i\frac{\theta_j}{2}}(e^{i\frac{\theta-\theta_j}{2}}-e^{i\frac{\theta_j-\theta}{2}})}=te^{\frac{i}{2}\sum_{j=1}^N(\mu_j-\theta_j)}\frac{\prod_{j=1}^N\sin(\frac{\theta-\mu_j}{2})}{\prod_{j=1}^N\sin(\frac{\theta-\theta_j}{2})}.
\]
Assuming without loss of generality that \(\theta_1<\mu_1<\theta_2<\mu_2<\cdots\), we observe that
\[
    \sum_{j=1}^N(\mu_j-\theta_j)=\sum_{j=1}^N|(\theta_j,\mu_j)|=|\{\theta\in\mathbb T\;|\; \varphi(\theta)<0\}|\in(0,2\pi).
\]
Therefore \(\frac{1}{2}\sum_{j=1}^N(\mu_j-\theta_j)\in (0,\pi)\). On the other hand, \(g(e^{i\theta})=i\varphi(\theta)\), so \(e^{\frac{i}{2}\sum_{j=1}^N(\mu_j-\theta_j)}=\pm i\). It follows that \(|\{\theta\in\mathbb T\;|\; \varphi(\theta)<0\}|=\pi\) and the claimed formula follow at once. 
\end{proof}

The preceding argument also shows that \(|\{\theta\in\mathbb T\;|\; \varphi(\theta)>0\}|=\pi\).  From the formula above, we observe that if \(\re P(z)=0\) for \(z\in\partial\mathbb D\setminus \{\overline{\alpha_1},\dots,\overline{\alpha_N}\}\), then \(\im Q(z)=0\) as well.

Next, we give a rudimentary estimate for the value of extremal functions at zero. 

\medskip

\begin{lem}\label{lower bound a_0}
Let \(f(z)=\sum_{j=0}^\infty a_jz^j\) be extremal for \(M_n\) and assume that \(a_0>0\). Then \(a_0\geq e^{-2n}\).
\end{lem}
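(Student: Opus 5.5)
We need to show \(a_0\geq e^{-2n}\), i.e. \(t=-\log a_0=\sum_j\lambda_j\leq 2n\). The bound should come from the positivity of \(\re P\), which the paper has already quoted from Mart\'in et al.

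The first step is to integrate the positive trigonometric polynomial. By \cite[Prop. 3(d)-(e)]{MartinSawyerTueroVukotic2015}, \(\re P\geq 0\) on \(\partial\mathbb D\) and \(\re P(\alpha_k)=0\) for each atom. Recall also the identity used in the proof of Theorem \ref{thm:X}:
\[
\re T_n(fg)=\sum_k\lambda_k\re P(\alpha_k)=0 .
\]
This says that pairing \(\re P\) against the Herglotz measure \(\sum_k\lambda_k\delta_{\alpha_k}\) gives zero. That alone does not bound \(t\), so instead I would pair against the boundary data of \(g\) itself.

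The key step uses the first formula of Corollary \ref{cor:formulas for varphi}, namely \(\varphi(\theta)\re P(e^{-i\theta})=\im Q(e^{-i\theta})\) away from the atoms. Alternatively one can use the coefficient identity \eqref{eq:vanishing r} with \(r=n\), which reads
\[
\sum_{j=0}^n a_{n-j}b_{n+j}=-\sum_{j=1}^n\overline{a_{n-j}}\,b_{n-j}-\overline{a_0}\,b_0 .
\]
This isolates \(a_0b_0=-a_0t\). The remaining terms are then bounded using
\[
|b_j|=2\Big|\sum_k\lambda_k\alpha_k^j\Big|\leq 2t,\qquad |a_j|\leq 1 .
\]
A more promising route is the following. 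Since \(\re P\geq0\) with constant term \(a_n\), the Fej\'er-type bound gives \(|a_{n-j}|\leq a_n\) for each \(j\). So I would try to derive a relation of the form \(t\cdot a_0\lesssim\) something controlled by \(n\,a_n\), or, more simply, \(t\leq 2n\) directly.

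The cleanest attempt is the derivative formula. Evaluate the second formula of Theorem \ref{thm:X} at \(z=0\) after multiplying through by the denominator. Comparing the lowest-order terms of
\[
zg'(z)\big(z^nP(1/z)+z^n\overline{P(\overline z)}\big)=n(\cdots)-\big(z^{n-1}P'(1/z)+z^{n+1}\overline{P'(\overline z)}\big)
\]
yields identities relating \(b_1\) to \(a_1/a_0\). That gives a bound on \(b_1\), not on \(t\).

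So instead I would average \(\varphi'\) over the circle. Each interval \((\theta_j,\theta_{j+1})\) contributes a full increase of \(\varphi\) from \(-\infty\) to \(\infty\), so this does not help either.

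I therefore expect the working argument to be a direct comparison. By extremality \(a_n\geq 2/e\). Write \(f=\exp(t\frac{h-1}{h+1})\) with \(h\) a Blaschke product of degree \(N\leq n\), as in Lemma \ref{lem:h is Blaschke}. Then
\[
a_n=\frac{1}{2\pi i}\oint f(z)z^{-n-1}\,dz .
\]
Using the subordination \(f=F_t\circ h\) with \(F_t(w)=\exp(t\frac{w-1}{w+1})\), the coefficient \(a_n\) is a combination of the coefficients \(\beta_k(t)\) of \(F_t\) for \(k\leq n\), weighted by coefficients of \(h^k\), each of which has modulus at most \(1\) by Parseval/Cauchy--Schwarz. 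By the Laguerre representation, \(|\beta_k(t)|=e^{-t}|L_k^{(-1)}(2t)|\). If \(t>2n\), the point \(2t\) lies beyond the largest zero of \(L_k^{(-1)}\) for every \(k\leq n\), since that zero is below \(4k\). In that regime \(e^{-t}|L_k^{(-1)}(2t)|\) is decreasing and exponentially small, roughly \(e^{-t}(2t)^k/k!\). The goal is then to show that the sum \(\sum_{k\leq n}|\beta_k(t)|\) is less than \(2/e\) when \(t>2n\), which contradicts extremality.

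The main obstacle is this final step. The crude bound \(\sum_k e^{-t}(2t)^k/k!\) is close to \(1\) near \(t\approx 2n\), so the estimate needs care. I would try to sharpen it either by using \(\sum_k|T_n(h^k)|^2\)-type orthogonality, or by exploiting the fact that \(h^k\) vanishes to order \(k\), so only \(k\leq n\) terms appear with controlled weights. If that fails, I would accept a slightly weaker constant \(e^{-cn}\) and adjust accordingly.
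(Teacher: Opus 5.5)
\textbf{There is a genuine gap: the decisive step of your argument is never proved.} The first half of your proposal consists of routes you yourself abandon: pairing \(\re P\) with the atoms, the identity \eqref{eq:vanishing r} at \(r=n\) (where your crude bounds make \(t\) cancel), the expansion of \(zg'\) at \(0\), and averaging \(\varphi'\). The subordination argument you finally settle on stops exactly where the work lies. You would need \(\sum_{k=1}^n|\beta_k(t)|<2/e\) for every \(t>2n\) and every \(n\).

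Your heuristic for that step is also wrong. The term-by-term bound \(e^{-t}(2t)^k/k!\) at \(t=2n\), \(k=n\) is about \((4/e)^n/\sqrt{2\pi n}\), which is exponentially large, not ``close to \(1\)''. Near the turning point \(2t\approx 4k\), the true size of \(e^{-t}|L_k^{(-1)}(2t)|\) comes from heavy cancellation in the alternating sum and is of order \(k^{-1/3}\) (the Airy regime). So you would need uniform turning-point estimates for Laguerre polynomials, with explicit constants, summed over \(k\le n\) and valid for all \(n\). Rooney's inequality does not suffice: it only gives \(|\beta_k|\lesssim k^{-1/4}\), and that sum over \(k\le n\) grows like \(n^{3/4}\).

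A smaller point: the monotonicity you invoke holds for \(2t>4k\), because \(w=e^{-x/2}L_k^{(-1)}(x)\) solves \(w''=(\frac14-\frac kx)w\). It does not follow merely from \(2t\) exceeding the largest zero; just past that zero \(|w|\) is still increasing. Retreating to \(e^{-cn}\) would not prove the lemma as stated.

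\textbf{The missing idea is a one-parameter variation that uses extremality directly.} The function \(-\log|f|\) is non-negative and harmonic, with value \(t=-\log a_0\) at \(0\). Harnack's inequality therefore gives \(|f(z)|\le a_0^{(1-r)/(1+r)}\) for \(|z|\le r\). Hence \(f_\lambda(z)=a_0^{-\lambda/(2-\lambda)}f((1-\lambda)z)\) lies in \(\mathcal B_0\) for \(\lambda\in[0,1)\), with \(f_0=f\), and
\[
M_n(f_\lambda)=(1-\lambda)^na_0^{-\lambda/(2-\lambda)}M_n(f).
\]
Since \(M_n(f)\ge 2/e>0\), extremality forces the right derivative of \((1-\lambda)^na_0^{-\lambda/(2-\lambda)}\) at \(\lambda=0\) to be \(\le 0\). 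That derivative is \(-n-\frac12\log a_0\), so the condition is exactly \(a_0\ge e^{-2n}\).

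In words: shrinking \(f\) costs a factor of roughly \(1-n\lambda\) on the \(n\)th coefficient. Harnack frees up room to rescale by roughly \(1+\frac{t}{2}\lambda\). So if \(t>2n\), \(f_\lambda\) would beat \(f\). This is the paper's argument, and it needs no coefficient asymptotics at all.
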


\noindent\textit{Remark}: This is very far from the expected \(a_0=1/e\), so this bound is not very useful in practice.

\begin{proof}
Let \(U(x,y)=-\re \log f(x+iy)\). This is a non-negative harmonic function on the unit disc in \(\mathbb R^2\), so for any \(R<1\) and \(z=x+iy\) with \(|z|<R\) we have by Harnack's inequality that
\[
    \frac{R-|z|}{R+|z|}\,U(0,0)\leq U(x,y)\leq \frac{R+|z|}{R-|z|}\,U(0,0).
\]
Sending \(R\to 1\) and writing this in terms of \(f\) we find that
\[
    \frac{1-|z|}{1+|z|}|\log a_0|\leq |\log |f(z)||\leq \frac{1+|z|}{1-|z|}|\log a_0|.
\]
Taking exponentials gives \(a_0^\frac{1+|z|}{1-|z|}\leq |f(z)|\leq a_0^{\frac{1-|z|}{1+|z|}}\), meaning that \(|f(z)|\leq a_0^\frac{1-r}{1+r}\) for \(|z|\leq r<1\).

Now for \(\lambda\geq 0\) define functions \(f_\lambda(z)=f((1-\lambda)z)a_0^{-\frac{\lambda}{2-\lambda}}\). The preceding bound ensures that \(|f_\lambda|\leq 1\) for \(z\in \mathbb D\) so each \(f_\lambda\in\mathcal B_0\), and \(f_0=f\) is extremal. Thus,
\[
    0\geq \frac{d}{d\lambda}\bigg|_{\lambda=0}M_n(f_\lambda(z))=M_n\bigg(\frac{\partial}{\partial\lambda}\bigg|_{\lambda=0}f_\lambda(z)\bigg)=M_n\bigg(-zf'(z)-\frac{1}{2}f(z)\log a_0\bigg).
\]
Observe that \(M_n\) is \(\mathbb R\)-linear and \(M_n(f)=a_n\) while \(M_n(zf'(z))=na_n\). Thus, 
\[
    na_n+\frac{1}{2}a_n\log a_0\geq 0.
\]
Dividing by \(a_n>0\) and taking an exponential, the result follows immediately. 
\end{proof}

It is shown in \cite{MartinSawyerTueroVukotic2015} that if the polynomial \(P\) vanishes only on the unit circle, then the conjecture is true. The zeros of \(P\) can be localized to an annular region, which is precisely the unit circle when \(a_n=2a_0\) and \(a_1=\cdots=a_{n-1}=0\).

\medskip

\begin{lem}
Let \(P\) be as in Theorem \ref{thm:X}. The zeros of \(P\) belong to the annulus \(\{1\leq |z|\leq r\}\), 
\[
    r=\frac{1}{a_0}\sqrt{\sum_{j=0}^{n-1}|a_j|^2+\frac{a_n^2}{4}}.
\]
\end{lem}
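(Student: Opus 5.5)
The plan is to prove the two bounds separately. The lower bound \(|z|\geq 1\) comes from positivity of \(\re P\) in \(\mathbb D\). The upper bound comes from a Cauchy–Schwarz estimate on the leading coefficient of \(P\), which is \(2a_0\).

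For the lower bound, I will use \cite[Prop. 3(d)-(e)]{MartinSawyerTueroVukotic2015}, already quoted above: \(\re P>0\) in \(\mathbb D\). Hence \(P\) has no zeros in \(\mathbb D\), and every zero \(z\) of \(P\) satisfies \(|z|\geq 1\).

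For the upper bound, write
\[
P(z)=a_n+2\sum_{j=1}^{n}a_{n-j}z^j,
\]
so the leading coefficient is \(2a_0\neq 0\), where \(a_0>0\) by normalization. Let \(P(z)=0\) with \(|z|>1\). Then
\[
2a_0z^n=-a_n-2\sum_{j=1}^{n-1}a_{n-j}z^j .
\]
Dividing by \(2z^n\) gives
\[
a_0=-\frac{a_n}{2}z^{-n}-\sum_{j=1}^{n-1}a_{n-j}z^{j-n}.
\]
The right-hand side is a sum of the \(n\) terms \(\frac{a_n}{2}z^{-n}\) and \(a_k z^{-k}\) for \(1\leq k\leq n-1\), up to sign. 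Cauchy–Schwarz bounds it by
\[
\bigg(\frac{a_n^2}{4}+\sum_{k=1}^{n-1}|a_k|^2\bigg)^{1/2}\bigg(\sum_{k=1}^{n}|z|^{-2k}\bigg)^{1/2}.
\]
Since \(|z|>1\), the second factor is at most \(\big(1/(|z|^2-1)\big)^{1/2}\). With \(S=\sum_{k=1}^{n-1}|a_k|^2+a_n^2/4\), this gives \(a_0^2(|z|^2-1)\leq S\), that is,
\[
|z|^2\leq \frac{a_0^2+S}{a_0^2}=\frac{1}{a_0^2}\bigg(\sum_{j=0}^{n-1}|a_j|^2+\frac{a_n^2}{4}\bigg)=r^2 .
\]
This is exactly the claimed radius. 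If \(|z|\leq 1\), then \(|z|\leq r\) trivially, because \(r\geq 1\) from the \(j=0\) term.

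The only step needing care is applying Cauchy–Schwarz to the correctly weighted tail and bounding the geometric sum by \(1/(|z|^2-1)\); both are routine. The lower bound rests entirely on the cited positivity of \(\re P\) in \(\mathbb D\). As a consistency check, when \(a_n=2a_0\) and \(a_1=\cdots=a_{n-1}=0\) the formula gives \(r=1\), so the annulus collapses to the unit circle, as stated before the lemma.
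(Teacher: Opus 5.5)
Your proof is correct. It uses the same Cauchy--Schwarz estimate as the paper but a slightly different mechanism to locate the zeros. The paper sets \(R=P/(2a_0)\) and \(G=R-z^n\), then applies Rouch\'e's theorem on the circle \(|z|=r\). Pairing the coefficient vector \((a_1,\dots,a_{n-1},a_n/2)\) against the finite sum \(\sum_{j=0}^{n-1}r^{2j}\) gives \(|G|^2\le (r^2-1)\frac{r^{2n}-1}{r^2-1}=r^{2n}-1<|z^n|^2\), so all \(n\) zeros of \(P\) lie in \(|z|<r\). For that step to work, \(r\) must be chosen so that the product collapses exactly to \(r^{2n}-1\).

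You instead evaluate at a putative zero with \(|z|>1\) and bound the finite geometric sum by the infinite one. This is a direct Cauchy-type root bound with no argument principle. The radius then falls out of \(a_0^2(|z|^2-1)\le S\) rather than being tuned in advance. Rouch\'e's theorem also counts zeros inside the disc, but since \(\deg P=n\) with leading coefficient \(2a_0\neq 0\), this adds nothing.

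You also justify the inner boundary \(|z|\ge 1\) explicitly, from \(\re P>0\) in \(\mathbb D\), the variational fact from Mart\'in et al.\ quoted in Section 3. The paper's proof of this lemma only treats the outer radius and leaves the inner bound implicit, so your write-up is the more complete of the two.
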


\begin{proof}
Let \(R(z)=P(z)/2a_0\) so that \(R\) is monic, and set \(G(z)=R(z)-z^n\). If \(|z|=r\) then \(|z^n|=r^n\) while by the Cauchy-Schwarz inequality,
\[
    |G(z)|^2=\bigg|\sum_{j=1}^{n-1}\frac{a_j}{a_0}z^{n-j}+\frac{a_n}{2a_0}\bigg|^2\leq \bigg(\sum_{j=1}^{n-1}\frac{|a_j|}{a_0}r^{n-j}+\frac{a_n}{2a_0}\bigg)\leq \bigg(\sum_{j=1}^{n-1}\frac{|a_j|^2}{a_0^2}+\frac{a_n^2}{4a_0^2}\bigg)\bigg(\sum_{j=0}^{n-1}r^{2j}\bigg).
\]
The right-hand side is exactly \(r^{2n}-1\), so if \(|z|=r\) then \(|G(z)|^2\leq r^{2n}-1<r^{2n}=|z^n|^2\). Thus by Rouch\'e's theorem both \(z^n\) and \(R=z^n+G\) have the same number of zeros in \(\{|z|\leq r\}\). As \(R\) has degree \(n\) all of its zeros thus belong to this disc, and the same is true of \(P=2a_0R\). 
\end{proof}

\medskip

The rough estimate \(\sum_{j=0}^n|a_n|^2\leq 1\) shows that if \(P(z)=0\) then \(|z|\leq 1/a_0\). If \(f=f_n\) then \(r=1\) as we expect. Our use of Rouche's theorem was quite elementary, and conceivably a smaller value of \(r\) can be found using better estimates and properties of extremal functions. Furthermore, extra information is available about the coefficients of functions in \(\mathcal{B}_0\).

\medskip

\begin{lem}
Let \(\sum_{j=0}^\infty a_jz^j\in\mathcal{B}_0\). Then the following estimates hold:
\begin{gather*}
    |a_1|^2+|a_2|^2\leq \frac{32}{e^{4}}\approx 0.586,\\
    |a_1|^2+|a_2|^2+|a_3|^2\leq \frac{27}{2e^3} \approx0.672,\\
    |a_1|^2+|a_2|^2+|a_3|^2+|a_4|^2\leq \frac{96(33-19\sqrt{3})e^{2\sqrt{3}}}{e^{6}}\approx 0.692.
\end{gather*}
Equality holds in each estimate for \(\exp(t\frac{z-1}{z+1})\) for some \(t>0\).
\end{lem}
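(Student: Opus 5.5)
The plan is to use the representation \(f=\exp\big(t\frac{h-1}{h+1}\big)\) from Section 4, with \(t>0\) and \(h\) an analytic self-map of \(\mathbb D\) satisfying \(h(0)=0\). For a general \(f\in\mathcal B_0\) this holds with \(h\) merely bounded by one, and after a rotation we may take \(f(0)>0\). The first \(k\le4\) coefficients of \(f\) depend only on \(c_1,\dots,c_4\), where \(h(z)=\sum_{j\ge1}c_jz^j\). Write \(\exp\big(t\frac{w-1}{w+1}\big)=\sum_{j\ge0}\beta_j(t)w^j\), with \(\beta_j(t)=(-1)^je^{-t}L_j^{(-1)}(2t)\) as in the proof of Theorem \ref{thm:3}. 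Then \(a_1,\dots,a_k\) are explicit polynomials in \(c_1,\dots,c_k\) with coefficients \(\beta_j(t)\). For instance,
\[
a_1=\beta_1c_1,\qquad a_2=\beta_1c_2+\beta_2c_1^2 .
\]

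The key step is to reduce to the extremal configuration \(h(z)=z^m\). The guiding heuristic is that the sum \(\sum_{j=1}^k|a_j|^2\) is a Hilbert-space quantity. I would first try subordination: since \(f=F_t\circ h\) with \(F_t(w)=\exp\big(t\frac{w-1}{w+1}\big)\), Littlewood-type (Rogosinski) inequalities compare partial sums of squared coefficients of \(F_t\circ h\) with those of \(F_t\). This is not sufficient in general, because Rogosinski's theorem controls only \(\sum_{j\le k}|a_j|^2\) including \(a_0\), and only for the full \(H^2\) norm.

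The route I would therefore commit to is a direct finite-dimensional optimization. The coefficient vector \((c_1,\dots,c_k)\) of a Schur function with \(h(0)=0\) ranges over a set described by the Schur algorithm: \(c_1=\gamma_1\), \(c_2=(1-|\gamma_1|^2)\gamma_2\), and so on, with \(|\gamma_j|\le1\). I would substitute this parametrization and maximize \(\sum_{j\le k}|a_j|^2\) over the \(\gamma_j\) and over \(t\). Using a rotation \(z\mapsto e^{i\theta}z\) together with phase alignment, one may take all quantities real and arrange the worst case. The expected outcome is that the maximum is attained on the boundary \(|\gamma_1|=1\), that is \(h(z)=z\). In that case \(f=\exp\big(t\frac{z-1}{z+1}\big)\), the sum equals \(\sum_{j=1}^k\beta_j(t)^2\), and it remains to maximize a one-variable function of \(t\).

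Finally, I would compute these one-variable maxima. For \(k=2\), \(\beta_1=2te^{-t}\) and \(\beta_2=2t(t-1)e^{-t}\), so
\[
\beta_1^2+\beta_2^2=4t^2e^{-2t}\big(1+(t-1)^2\big).
\]
Differentiating and locating the critical point \(t=2\) gives \(32/e^4\). For \(k=3\) the analogous computation gives the critical point \(t=3/2\) and the value \(27/(2e^3)\). For \(k=4\) the critical point is \(t=3-\sqrt3\), which produces the stated radical expression.

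I expect the main obstacle to be the reduction step: showing that no Schur function with \(|\gamma_1|<1\) beats \(h(z)=z\). My first attempt would be to note that for fixed \(t\) the objective is a Hermitian form in the tail parameters, and to check concavity or monotonicity in \(|\gamma_1|\). I would supplement this with the observation that the alternative branches, where \(h\) has \(c_1=0\) so that \(h\approx z^2\), give sums bounded by \(\beta_1^2+\beta_2^2\) evaluated with shifted indices, which are visibly smaller.
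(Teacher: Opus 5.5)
There is a genuine gap, and it is the step you flag yourself: the reduction to \(h(z)=z\). In the route you commit to, the claim that \(\sum_{j\le k}|a_j|^2\) is maximized over the Schur parameters at \(|\gamma_1|=1\) is stated as an expected outcome, not proved. The tools you suggest do not obviously deliver it, because the objective is not a Hermitian form in the tail parameters. Already \(c_3=(1-|\gamma_1|^2)\big((1-|\gamma_2|^2)\gamma_3-\overline{\gamma_1}\gamma_2^2\big)\), and \(a_4\) contains the term \(\beta_2(t)c_2^2\). Your remark about the branch \(c_1=0\) is a heuristic that covers only one sub-case. Without this step, your one-variable computations only evaluate the functional on the family \(\exp(t\frac{z-1}{z+1})\), so they give no upper bound for a general \(f\in\mathcal B_0\). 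Those computations are themselves correct: \(t=2\), \(t=3/2\) and \(t=3-\sqrt3\) give \(32/e^4\), \(27/(2e^3)\) and \(96(33-19\sqrt3)e^{2\sqrt3-6}\).

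The missing idea is the subordination argument you discarded, and both of your reasons for discarding it are mistaken. Rogosinski's theorem is precisely a statement about partial sums, and it follows from Littlewood's principle by truncation. Put \(S_k(w)=\sum_{j=0}^k\beta_j(t)w^j\). Since \(h(0)=0\) we have \(h^j=O(z^j)\), so \(S_k\circ h\) and \(f=F_t\circ h\) agree modulo \(z^{k+1}\). Hence
\[
\sum_{j=0}^k|a_j|^2\le\|S_k\circ h\|_{H^2}^2\le\|S_k\|_{H^2}^2=\sum_{j=0}^k\beta_j(t)^2 .
\]
The \(a_0\) term is harmless, because \(a_0=F_t(h(0))=e^{-t}=\beta_0(t)\). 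Subtracting it gives \(\sum_{j=1}^k|a_j|^2\le\sum_{j=1}^k\beta_j(t)^2\) for every Schur function \(h\) with \(h(0)=0\), with equality for \(h(z)=z\). This is the subordination estimate the paper takes from Newman--Shapiro, and it replaces your entire Schur-parameter optimization. What remains is the maximization over \(t>0\), where you must also check that your critical points are global maxima. For \(k=3\) there are further critical points at \(t=3\) and \(t\approx3.26\). For \(k=4\) there is a competing local maximum near \(t\approx2.45\), with value about \(0.658\).
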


\medskip

Thus, the `mass' of functions in \(\mathcal B_0\) cannot concentrate to the first few coefficients. The bounds above follow from the subordination estimate in \cite[proof of Thm. 4]{NewmanShapiro1962} and elementary optimization. Such sharp bounds cannot be obtained so easily for five or more coefficients, since this would require one to compute in closed form the roots of a quintic polynomial. 

Another nice result, known already to the community and proved by Mart\'in et al. in \cite[\S2.1]{MartinSawyerTueroVukotic2015}, is that if extremal functions are unique then the conjecture is true. A slightly stronger result can be proved without much difficulty.

\medskip

\begin{lem}
Fix \(n\in\mathbb N\), and let \(p\) denote the smallest prime factor of \(n\). If \(M_n\) has fewer than \(p\) extremal functions, then \(\sup_{f\in\mathcal B_0}M_n(f)= \frac{2}{e}\) and \(f_n\) is the unique extremal function. 
\end{lem}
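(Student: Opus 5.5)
Suppose \(M_n\) has fewer than \(p\) extremal functions, and fix one, \(f\), normalized so that \(a_0>0\); by definition \(M_n(f)=\re a_n\) is the supremum. The plan is to use the rotation action to produce many extremal functions and count them. For \(k=0,1,\dots,n-1\), set
\[
    f^{(k)}(z)=f(\omega^k z),\qquad \omega=e^{2\pi i/n}.
\]
Each \(f^{(k)}\) lies in \(\mathcal B_0\) and has \(f^{(k)}(0)=a_0>0\). Its \(n\)th coefficient is \(\omega^{kn}a_n=a_n\), so every \(f^{(k)}\) is again extremal for \(M_n\). The \(f^{(k)}\) are therefore all extremal, and by hypothesis at most \(p-1\) of them are distinct.

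Next I will look at the stabilizer \(H=\{k\in\mathbb Z/n\mathbb Z : f^{(k)}=f\}\), which is a subgroup of \(\mathbb Z/n\mathbb Z\). The orbit \(\{f^{(k)}\}\) has size \([\mathbb Z/n\mathbb Z:H]\), and this index is a divisor of \(n\). Every divisor of \(n\) other than \(1\) is at least \(p\), while the orbit has fewer than \(p\) elements. Hence the index is \(1\), so \(H\) is the whole group and \(f(\omega z)=f(z)\).

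Now take \(\theta=2\pi/n\), which lies in \((0,4\pi/n)\), so condition (1) of Theorem \ref{thm:3} holds for \(f\). The proof of that implication in Theorem \ref{thm:3} is carried out for an individual extremal function. It shows that \(f\equiv a_0+a_nz^n \bmod z^{2n}\), which puts \(f\) in case (5). The argument for (5) then gives \(|a_n|\le 2/e\), with equality only for \(f=f_n\) (the case \(n=1\) is vacuous there, since it is the classical \(2/e\) bound). In particular \(\sup_{\mathcal B_0} M_n\le 2/e\). The reverse inequality holds because \(M_n(f_n)=2/e\), so the supremum equals \(2/e\) and \(f\) attains it, which forces \(f=f_n\).

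The remaining point is uniqueness, and it follows directly. Every extremal function \(F\) with \(F(0)>0\) satisfies \(\re T_n(F)=2/e\) together with \(|T_n(F)|\le 2/e\), so \(T_n(F)=2/e\). The equality case of (5) then gives \(F=f_n\), so \(f_n\) is the unique extremal function.

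The main obstacle is the step that applies Theorem \ref{thm:3} to a single \(f\). Theorem \ref{thm:3} is stated for all extremal functions and all \(n\ge3\), whereas here I need the per-function conclusion. I would re-read the (1)\(\Rightarrow\)(5)\(\Rightarrow\)\(f_n\) chain and check that it never uses anything about other extremal functions, and I would treat \(n\le2\) separately (where \(p\le 2\)). There is also a normalization point to handle: if extremal functions are only required to satisfy \(M_n(f)\) maximal rather than \(a_0>0\), I would first restrict the count to functions with \(a_0>0\), and this restriction only makes the hypothesis weaker.
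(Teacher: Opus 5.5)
Your proposal is correct and follows essentially the paper's route: rotations by \(n\)th roots of unity preserve extremality, a counting argument forces \(f(\omega z)=f(z)\) (you use orbit--stabilizer where the paper uses pigeonhole plus a \(\gcd\) step), and the per-function argument from the proof of Theorem \ref{thm:3} then yields \(f=f_n\) (you go through (1) and (5), while the paper uses the vanishing of \(f'\) to order \(n-1\) at \(0\) and condition (3)). The one slip is in your uniqueness paragraph: the equality case in the proof of (5) applies only to functions already known to satisfy (5), so it cannot be invoked for an arbitrary \(F\) just because \(T_n(F)=2/e\); instead, note that your \(f\) was an arbitrary extremal function with \(f(0)>0\), so running the same orbit argument on each such \(F\) already gives \(F=f_n\).
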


\begin{proof}
Let \(f\) be extremal for \(M_n\), set \(\omega=e^{2\pi i/n}\), and for \(1\leq j\leq p\) define \(f_j(z)=f(\omega^jz)\). Each \(f_j\) is extremal, so there exist distinct \(j,k\in\{1,\dots, n\}\) such that \(f_j=f_k\). Assume that \(j<k\) and set \(m=k-j\). It follows that \(1\leq m<p \) and that \(f(\omega^{m}z)=f_k(\omega^{-j}z)=f_j(\omega^{-j}z)=f(z)\). 

If \(\gcd(m,n)>1\) then it has a prime divisor \(q\), and \(q\leq m<p\), contradicting the fact that \(p\) is the smallest prime divisor of \(n\). Therefore \(\gcd(m,n)=1\) and \(m\) generates \(\mathbb Z/n\mathbb Z\), so \(\omega^m\) generates the \(n\)th roots of unity. Thus, \(\omega^{mk}=\omega\) for some \(k\in\mathbb N\), so \(f(\omega z)=f(z)\) and
\[
    f(z)=\frac{1}{n}\sum_{j=1}^nf(\omega^jz)=\sum_{j=0}^\infty a_{nj}z^{nj}. 
\]
This shows that \(f'\) vanishes to order \(n-1\) at zero. This proves the claim by Theorem \ref{thm:3} \textit{(3)}.
\end{proof}


Our final result is to highlight a nontrivial connection between the values of extremal functions at zero and the number of atoms \(N\) in the representation \eqref{extremal}. If we consider the problem of maximizing \(M_n\) on the restricted class \(\mathcal B_0(r)=\{f\in\mathcal B_0\;|\; |f(0)|\leq r\}\), then extremal functions need not be unique for all \(r\in (0,1)\).

\begin{lem}
Let \(r_0\approx 0.18047\) denote the smallest real solution to \(r\log r=-(2+\sqrt{5})e^{-\frac{3+\sqrt{5}}{2}}\), and set \(\mathcal B_0(r_0)=\{f\in\mathcal{B}_0\;|\;|f(0)|\leq r_0\}\). Then 
\[
    \sup_{f\in\mathcal B_0(r_0)}M_2(f)=(4+2\sqrt{5})e^{-\frac{3+\sqrt{5}}{2}},
\]
and equality holds if and only if
\[
    f(z)=\exp\bigg(\log\bigg(\frac{1}{r_0}\bigg)\cdot\frac{z^2-1}{z^2+1}\bigg)\quad\textrm{or}\quad f(z)=\exp\bigg(\frac{3+\sqrt{5}}{2}\cdot\frac{z-1}{z+1}\bigg).
\]
If \(r_0\) is replaced with any other number in \((0,1)\) then extremal functions are unique.
\end{lem}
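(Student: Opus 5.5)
We plan to reduce the constrained problem for \(n=2\) to an explicit one-variable optimization, using the same Schur-function parametrization as in the proof of Theorem \ref{thm:3}\textit{(5)}. Given \(f\in\mathcal B_0(r)\), first multiply \(f\) by a unimodular constant \(\eta\) so that \(f(0)=e^{-t}>0\). This does not change \(|f(0)|\), and since \(M_2(f)\leq|a_2|=|a_2(\eta f)|\), it suffices to bound \(|a_2|\) for such \(f\) and to track when \(M_2(f)=|a_2|\). The constraint becomes \(t\geq T:=\log(1/r)\). Write \(f=\exp(t\frac{h-1}{h+1})\), where \(h\) is a holomorphic self-map of \(\mathbb D\) with \(h(0)=0\), and write \(h(z)=c_1z+c_2z^2+\cdots\). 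From the expansion \(\exp(t\frac{z-1}{z+1})=e^{-t}+2te^{-t}z+2t(t-1)e^{-t}z^2\mod z^3\) we get
\[
a_2=2te^{-t}\big(c_2+(t-1)c_1^2\big).
\]
The Schur–Schwarz lemma applied to \(h(z)/z\) gives \(|c_2|\leq 1-|c_1|^2\). Hence
\[
|a_2|\leq 2te^{-t}\big(1-|c_1|^2+|t-1||c_1|^2\big)\leq G(t):=2te^{-t}\max\{1,|t-1|\}.
\]
Equality forces either \(c_1=0\) and \(|c_2|=1\), so \(h=\xi z^2\), or \(|c_1|=1\), so \(h=\xi z\). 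The first is possible when \(|t-1|\leq1\) and the second when \(|t-1|\geq 1\); both at \(t=2\).

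Next we maximize \(G\) over \(t\geq T\). We have \(G(t)=2te^{-t}\) on \([0,2]\) and \(G(t)=2t(t-1)e^{-t}\) on \([2,\infty)\). The first branch increases up to \(t=1\) and then decreases. For the second branch, differentiating gives \(t^2-3t+1=0\), so its maximum occurs at \(t_*=\frac{3+\sqrt5}{2}\). There \(t_*(t_*-1)=2t_*-1=2+\sqrt5\), so the maximum value is \((4+2\sqrt5)e^{-t_*}\approx0.617\). Now compare the first branch at the constraint endpoint with this interior value of the second branch:
\begin{itemize}
\item If \(T\leq 1\), the supremum is \(2/e\), attained only at \(t=1\).
\item The equation \(2Te^{-T}=(4+2\sqrt5)e^{-t_*}\) is exactly \(r\log r=-(2+\sqrt5)e^{-t_*}\) with \(r=e^{-T}\). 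Its smallest root \(r_0\) corresponds to the solution \(T_0\approx1.712\) of this equation with \(T_0>1\). Note that \(T_0\in(1,2)\) and \(T_0<t_*\), so both branches are admissible at \(t\geq T_0\).
\item For \(1<T<T_0\), the maximum over \(t\geq T\) is the endpoint value \(2Te^{-T}\), attained uniquely.
\item For \(T_0<T\), the maximum is attained only at \(t=\max(T,t_*)\) on the second branch, again uniquely.
\item At \(T=T_0\) the two values tie, and this is the only value of \(T\) at which two different \(t\) attain the maximum.
\end{itemize}
The claimed supremum \((4+2\sqrt5)e^{-t_*}\) at \(r_0\) follows.

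Finally we pin down the extremal functions. In each equality case, \(f=\eta^{-1}\exp(t\frac{\xi z^k-1}{\xi z^k+1})\) with \(k\in\{1,2\}\), and the requirement \(M_2(f)=|a_2|\) fixes the unimodular constants. For \(k=2\) we have \(a_2=2te^{-t}\xi\eta^{-1}\). For \(k=1\) we have \(a_2=2t(t-1)e^{-t}\xi^2\eta^{-1}\). We need to argue that the stated representatives are the ones intended, i.e.\ under the paper's normalization \(a_0>0\), so \(\eta=1\). This forces \(\xi=1\) in both cases (for \(k=1\), \(\xi^2=1\) and the sign is absorbed by \(z\mapsto -z\)). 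We also check \(t=T_0=\log(1/r_0)\) for the \(k=2\) function and \(t=t_*\) for the \(k=1\) function.

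The main obstacle is bookkeeping rather than analysis. First, we must confirm that the equality cases of the Schur bound combined with the \(\max\{1,|t-1|\}\) step are exactly as described, including the sign issue when \(t<1\) where \(t-1<0\). Second, we must handle the non-uniqueness coming from rotations and the unimodular factor, so that "unique" is interpreted modulo the normalization \(a_0>0\). Third, we must verify numerically that the smallest root \(r_0\) indeed gives \(T_0\in(1,2)\) with \(T_0<t_*\).
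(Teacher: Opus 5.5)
Your proposal is correct and is essentially the argument the paper intends. It is the Hummel--Scheinberg--Zalcman \(n=2\) bound \(|a_2|\leq 2te^{-t}\max\{1,|t-1|\}\), obtained from the parametrization \(f=\exp(t\frac{h-1}{h+1})\) (as in the proof of Theorem~\ref{thm:3}\textit{(5)}) and the Schur inequality \(|c_2|\leq 1-|c_1|^2\), with the optimization taken over \(t\geq\log(1/r)\) instead of \(t\geq 0\). As you observe, with only \(a_0>0\) imposed both \(\exp(t_*\frac{z-1}{z+1})\) and \(\exp(t_*\frac{-z-1}{-z+1})\) (where \(t_*=\frac{3+\sqrt5}{2}\)) are extremal, so ``unique'' must be read modulo \(f\mapsto\xi_1f(\xi_2z)\) as in the paper's statement of the conjecture; also, the extra equality cases at \(t=2\), where \(|c_1|\) is unconstrained, are harmless because \(t=2\) is never a maximizer.
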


This can be proved by a straightforward adaptation of the proof of the Krzy\.z conjecture in case \(n=2\) by Hummel et al. \cite{HummelScheinbergZalcman1977}, only optimizing over \(t\geq -\log r_0\) instead of over all \(t\geq 0\). This is left as an exercise. Extremal functions for the constrained problem therefore need not be unique, and can even have different numbers of atoms.


\bibliographystyle{siam}
\bibliography{references}
\address

\end{document}